\documentclass[11pt,reqno]{amsart}

\usepackage[T1]{fontenc}
\usepackage[a4paper,margin=1.08in]{geometry}
\usepackage{amsmath,amssymb,amsthm,mathtools}
\usepackage{enumitem}
\usepackage{microtype}
\usepackage[mathlines]{lineno}
\usepackage[colorlinks=true,citecolor=blue,linkcolor=blue,urlcolor=blue]{hyperref}

\hypersetup{
  pdftitle={Full-Tail Dynamical Rigidity Forced by Atomic Navier--Stokes Energy Concentration},
  pdfauthor={Hao Huang},
  pdfsubject={Navier--Stokes endpoint energy concentration and full-tail solenoidal saturation},
  pdfkeywords={Navier--Stokes equations, endpoint energy measure, concentration, solenoidal advection--diffusion, adjoint method}
}

\allowdisplaybreaks
\numberwithin{equation}{section}
\modulolinenumbers[5]

\newtheorem{theorem}{Theorem}[section]
\newtheorem{proposition}[theorem]{Proposition}
\newtheorem{lemma}[theorem]{Lemma}
\newtheorem{corollary}[theorem]{Corollary}
\newtheorem{definition}[theorem]{Definition}
\theoremstyle{remark}
\newtheorem{remark}[theorem]{Remark}

\newcommand{\T}{\mathbb T}
\newcommand{\R}{\mathbb R}
\newcommand{\Pp}{\mathbb P}
\newcommand{\Qq}{\mathbb Q}
\newcommand{\Hh}{\mathcal H}
\newcommand{\Kk}{\mathcal K}

\newcommand{\one}{\mathbf 1}
\newcommand{\dd}{\,\mathrm d}
\newcommand{\norm}[1]{\left\lVert#1\right\rVert}
\newcommand{\abs}[1]{\left|#1\right|}
\newcommand{\ip}[2]{\left\langle#1,#2\right\rangle}
\newcommand{\weakto}{\rightharpoonup}
\newcommand{\weakstarto}{\stackrel{*}{\rightharpoonup}}

\title[Atomic Endpoint Rigidity]{Full-Tail Dynamical Rigidity Forced by Atomic Navier--Stokes Energy Concentration}
\author{Hao Huang}
\address{Department of Computer Science and Engineering, Sungkyunkwan University, Suwon-si 16419, Gyeonggi-do, Republic of Korea}
\address{School of Mathematics and Statistics, Linyi University, Linyi 276000, China}
\email{huanghao92@skku.edu}

\subjclass[2020]{Primary 35Q30, 35B44; Secondary 76D05, 35K91}
\keywords{Navier--Stokes equations, endpoint energy measure, concentration, Hodge decomposition, Oseen evolution, adjoint method}

\begin{document}

\begin{abstract}
Let a smooth unforced three-dimensional Navier--Stokes flow on the flat torus
approach a finite terminal time.  Its kinetic-energy densities converge along
the full time variable to a unique endpoint measure.  We prove that each
point atom forces a same-parent, full-tail dynamical rigidity.  A preassigned
level-crossing catalogue and nested local-Hodge projections produce
orthogonal packets.  From the entire packet tail we extract one backward
adjoint whose terminal energy concentrates at the atom.  Cauchy saturation
then locks this adjoint to every late packet and yields uniform two-parameter
saturation of both the constrained Oseen propagator and its adjoint, with
vanishing first-order dissipation.  Every sufficiently late fixed-root
descendant consequently has infinite delayed second-order action and
nonintegrable positive enstrophy production.  Equivalently, a delayed
second-order operator budget determined solely by the Navier--Stokes parent
must fail arbitrarily close to the endpoint.
\end{abstract}

\maketitle

\section{Introduction}

Let \(\Omega=\T^3\) be the standard flat three-dimensional torus and let
\(\nu>0\).  We consider a real-valued divergence-free solution of
\begin{equation}
 \partial_tu+\Pp[(u\cdot\nabla)u]=\nu\Delta u,
 \qquad \nabla\cdot u=0,
 \label{eq:NS-intro}
\end{equation}
that is smooth on \([t_b,T_*)\times\Omega\), where \(T_*<\infty\).  The
Leray projector is denoted by \(\Pp\), and \(\Qq=I-\Pp\).  No maximality
assumption on \(T_*\) is used.  The principal application is the first
possible singular time of a smooth Navier--Stokes branch.

Endpoint energy measures encode possible loss of strong \(L^2\) compactness
without postulating a terminal state \(u(T_*)\).  Their concentration
properties were introduced by Shvydkoy and developed systematically by
Leslie and Shvydkoy~\cite{Shvydkoy2013,LeslieShvydkoy2018}.  In the present
smooth-preterminal viscous setting, the local energy identity determines a
unique finite measure \(\mu_*\) and, quantitatively,
\begin{equation}
 \abs{u(t,x)}^2\dd x\weakstarto\mu_*
 \qquad\text{as }t\uparrow T_*
 \label{eq:endpoint-measure-intro}
\end{equation}
along the full time variable; see Lemma~\ref{lem:unique-endpoint-measure}.
Related results constrain the size of concentration sets
\cite{ArnoldCraig2010}, establish partial regularity
\cite{CaffarelliKohnNirenberg1982}, give critical-space blow-up criteria
\cite{EscauriazaSereginSverak2003,Albritton2018}, or quantify concentration
of critical norms near singularities
\cite{BarkerPrange2020,BarkerPrange2021,MaekawaMiuraPrange2020}.  Under a
Type-I hypothesis, atomic concentration is excluded for three-dimensional
Euler flow~\cite{ChaeWolf2020}, while Type-I Navier--Stokes energy equality
is addressed in~\cite{LeslieShvydkoy2018}.

Taken together, these works describe the geometry, size, and
norm-theoretic cost of concentration and, under additional hypotheses, can
exclude atoms.  They do not identify the full-tail dynamical structure forced
by a single endpoint atom along the same nonlinear Navier--Stokes orbit.  The
present paper fills this structural gap.

This paper starts from the complementary situation
\begin{equation}
 \mu_*(\{a\})=m>0
 \label{eq:atom-intro}
\end{equation}
and asks what the \emph{same} Navier--Stokes orbit must do throughout its
entire late tail.  A point mass is a static endpoint datum; by itself it does
not distinguish isolated concentration events from a coherent preterminal
lineage.  Theorem~\ref{thm:atom-conveyor} proves that every atom necessarily
carries such a lineage.

Three features distinguish the result from existing concentration and linear
transport theories.  First, no Type-I, self-similar, smallness, or terminal
strong-convergence hypothesis is imposed.  Second, the atom does not merely
produce an adjoint or packet separately at each scale.  One backward adjoint,
extracted from the complete packet tail, locks to every sufficiently late
packet and yields estimates uniform over the full ordered triangle
\(k>j\geq J\).  Third, every propagator is driven by the original nonlinear
solution \(u\).  The drift is neither prescribed independently nor changed
from cell to cell.  This same-parent requirement separates the theorem from
mixing and small-scale-formation results for prescribed divergence-free
drifts~\cite{SilvestreVicol2012,Kumar2024,KumarWeber2026}, and from abstract
or exterior-domain evolution theories whose coefficients are fixed in
advance~\cite{HaakOuhabaz2015,AsamiHishida2025}.

The construction is ordered before compactness is invoked.  A countable
level-crossing catalogue is determined by the preterminal orbit without
using \(a\), \(m\), or \(\mu_*\).  Once an atom is specified, nested balls
are chosen from that catalogue.  Each new ball is frozen before the preceding
local-Hodge packet is defined, and a gap-two thinning preserves the already
constructed packets without reprojection.  Only then is the common adjoint
extracted.  Reverse-Oseen localization places its terminal energy at \(a\),
whereas the local-Hodge lower bound supplies at least the atomic mass.  The
full-time endpoint measure closes the two inequalities into equality in
Cauchy--Schwarz.  This saturation upgrades weak compactness to a unit adjoint
and then to uniform forward and backward transport saturation.

The second conclusion removes the atom-selected family from the statement.
For \(t_b\leq s<r<T_*\), let \(\mathfrak R_u(s,r)\) be the delayed
second-order evolution budget in \eqref{eq:parent-budget}.  It is defined
solely by the constrained Oseen propagator generated by \(u\).  Every
sufficiently late fixed-root descendant of the saturated family has
\begin{equation}
 \int_{\tau_{J+1}}^{T_*}
 \norm{\Delta U(t,\tau_J)q_J}_2^2\dd t=\infty
 \label{eq:intro-root-action}
\end{equation}
and nonintegrable positive enstrophy production.  Hence an atom forces
\(\mathfrak R_u(\tau_J,\tau_{J+1})=\infty\) for every sufficiently late
root.  Conversely, finiteness of this parent-only budget on one terminal tail
excludes point atoms.  The standard nonendpoint Serrin class is one natural
regime in which the budget is finite; see Corollary~\ref{cor:parent-budget}.

The principal chain is therefore
\begin{equation}
 \begin{aligned}
  &\text{one atom of the unique endpoint measure}
  \Longrightarrow \text{one same-parent full-tail saturated family}\\
  &\hspace{4.6cm}\Longrightarrow
  \text{failure of a parent-only delayed second-order budget}.
 \end{aligned}
 \label{eq:main-chain}
\end{equation}
The first implication is a structural rigidity theorem; the second converts
that structure into a family-free operator obstruction.  Together they
advance endpoint energy-measure theory from a static description of
concentration to same-parent dynamical rigidity and provide a concrete
operator target for endpoint compactness and atom-exclusion estimates.
Appendix
\ref{sec:endpoint-tests} records a further family-adapted endpoint test-space
consequence.

The paper is organized as follows.  Section~\ref{sec:setup} establishes the
full-time endpoint measure, the preterminal Oseen evolution family, and the
main statements.  Section~\ref{sec:level-crossing} constructs the frozen
local-Hodge chain.  Section~\ref{sec:linear} proves the two uniform linear
estimates.  Sections~\ref{sec:ghost} and \ref{sec:conveyor} extract the common
adjoint and prove full-tail saturation.  Section~\ref{sec:action} establishes
the second-order action and parent-only budget obstructions.  Section
\ref{sec:scope} summarizes the scope, and Appendix~\ref{sec:endpoint-tests}
contains the endpoint test-space consequence.

\section{Setting and main results}
\label{sec:setup}

We write \(L^2_\sigma(\Omega)\) for the real Hilbert space of periodic
divergence-free vector fields, including constant fields, with the usual
\(L^2\) inner product.  The solution \(u\) satisfies
\begin{equation}
 E_*:=\sup_{t_b\le t<T_*}\norm{u(t)}_2<\infty,
 \qquad
 \int_{t_b}^{T_*}\norm{\nabla u(t)}_2^2\dd t<\infty.
 \label{eq:energy-assumptions}
\end{equation}
These bounds follow from the energy equality for the smooth unforced branch.

\begin{lemma}[Quantitative full-time endpoint energy measure]
\label{lem:unique-endpoint-measure}
Under \eqref{eq:NS-intro} and \eqref{eq:energy-assumptions}, there is a
unique finite nonnegative Radon measure
\(\mu_*\in\mathcal M^+(\Omega)\) such that
\begin{equation}
 \abs{u(t,x)}^2\dd x\weakstarto\mu_*
 \qquad\text{as }t\uparrow T_*
 \label{eq:unique-endpoint-measure}
\end{equation}
along the full time variable.  More precisely, if
\begin{equation}
 D(s,t):=\int_s^t\norm{\nabla u(\rho)}_2^2\dd\rho,
 \label{eq:tail-enstrophy-D}
\end{equation}
then, for every \(\varphi\in C^\infty(\Omega)\) and
\(t_b\leq s<t<T_*\),
\begin{align}
 &\left|\int_\Omega\varphi\abs{u(t)}^2\dd x
       -\int_\Omega\varphi\abs{u(s)}^2\dd x\right|\notag\\
 &\quad\leq C_{\Omega,\varphi}\Bigl[
 E_*^{3/2}(t-s)^{1/4}
 \bigl(D(s,t)+E_*^2(t-s)\bigr)^{3/4}
 +\nu D(s,t)+\nu E_*^2(t-s)\Bigr].
 \label{eq:endpoint-measure-modulus}
\end{align}
 No maximality assumption on \(T_*\) is required.  The uniqueness conclusion
 is the periodic smooth-preterminal instance of the endpoint-measure theory in
 \cite{Shvydkoy2013,LeslieShvydkoy2018}; the displayed modulus is recorded for
 later full-time use.
\end{lemma}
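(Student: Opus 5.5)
The plan is to test the local energy identity against a fixed \(\varphi\in C^\infty(\Omega)\), bound the time difference of \(\int\varphi|u|^2\) by the displayed modulus, deduce that \(t\mapsto\int\varphi|u(t)|^2\) is Cauchy as \(t\uparrow T_*\), and identify the limit functional as a measure by density and Riesz representation. Since \(u\) is smooth on \([s,t]\times\Omega\), we take the inner product of \eqref{eq:NS-intro} with \(\varphi u\) and integrate by parts. Writing \(p\) for the pressure (\((u\cdot\nabla)u-\Pp[(u\cdot\nabla)u]=\nabla p\), with \(-\Delta p=\partial_i\partial_j(u_iu_j)\)), this gives
\begin{equation*}
 \int\varphi|u(t)|^2-\int\varphi|u(s)|^2
 =\int_s^t\!\!\int\Bigl[-2\nu\varphi|\nabla u|^2+\nu\Delta\varphi\,|u|^2
 +(|u|^2+2p)\,u\cdot\nabla\varphi\Bigr]\dd x\dd\rho .
\end{equation*}

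The next step is to bound each term on the right.

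\emph{Viscous terms.} These are bounded by \(2\nu\norm{\varphi}_\infty D(s,t)+\nu\norm{\Delta\varphi}_\infty E_*^2(t-s)\). This produces the terms \(\nu D+\nu E_*^2(t-s)\) in the modulus.

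\emph{Cubic terms.} These are bounded by \(C_\varphi\int_s^t(\norm{u}_3^3+\norm{p}_{3/2}\norm{u}_3)\). Calder\'on--Zygmund on \(\T^3\) gives \(\norm{p}_{3/2}\lesssim\norm{u}_3^2\), with \(p\) normalized to mean zero; constants never hit \(u\cdot\nabla\varphi\) in an essential way, since \(\int u\cdot\nabla\varphi\) is controlled by \(E_*\norm{\nabla\varphi}_2\). Gagliardo--Nirenberg on the torus, including the mean, gives \(\norm{u}_3\lesssim\norm{u}_2^{1/2}\norm{u}_{H^1}^{1/2}\). Hence \(\norm{u}_3^3\lesssim E_*^{3/2}(\norm{\nabla u}_2^2+E_*^2)^{3/4}\). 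Hölder in time with exponents \(4\) and \(4/3\) then yields
\begin{equation*}
 \int_s^t\norm{u}_3^3\lesssim E_*^{3/2}(t-s)^{1/4}\bigl(D(s,t)+E_*^2(t-s)\bigr)^{3/4}.
\end{equation*}
This is exactly the first term of \eqref{eq:endpoint-measure-modulus}.

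By \eqref{eq:energy-assumptions}, \(D(s,t)\le\int_s^{T_*}\norm{\nabla u}_2^2\to0\) as \(s\uparrow T_*\). So the right side tends to zero uniformly in \(t\in(s,T_*)\), and \(\Lambda(\varphi):=\lim_{t\uparrow T_*}\int\varphi|u(t)|^2\) exists for every smooth \(\varphi\). The family \(|u(t)|^2\dd x\) has total mass at most \(E_*^2\), so \(|\Lambda(\varphi)|\le E_*^2\norm{\varphi}_\infty\), and \(\Lambda\geq0\) on nonnegative \(\varphi\). Hence \(\Lambda\) extends by density to a positive bounded functional on \(C(\Omega)\), represented by a unique \(\mu_*\in\mathcal M^+(\Omega)\). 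A standard \(\varepsilon/3\) argument using the uniform mass bound upgrades convergence on smooth test functions to weak-\(*\) convergence against all of \(C(\Omega)\), along the full variable \(t\uparrow T_*\). Uniqueness is automatic, since weak-\(*\) limits are unique. Nowhere is maximality of \(T_*\) used, only smoothness before \(T_*\) and \eqref{eq:energy-assumptions}.

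The main obstacle I expect is the pressure term. It requires the periodic \(L^{3/2}\) Calder\'on--Zygmund bound and careful bookkeeping of the mean mode, so that the inhomogeneous Gagliardo--Nirenberg inequality produces the \(E_*^2(t-s)\) correction inside the \(3/4\) power rather than a worse term. Everything else is routine.
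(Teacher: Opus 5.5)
Your proof is correct and takes essentially the same route as the paper. Both arguments use the localized energy identity, the periodic Riesz-transform bound $\norm{p}_{3/2}\lesssim\norm{u}_3^2$, and the interpolation $\norm{u}_3^4\lesssim E_*^2(\norm{\nabla u}_2^2+E_*^2)$ with H\"older in time, followed by the Cauchy criterion as $s\uparrow T_*$, density, and Riesz representation. Two cosmetic slips do not affect the estimate: with your convention $\nabla p=(u\cdot\nabla)u-\Pp[(u\cdot\nabla)u]$ one actually gets $\Delta p=\partial_i\partial_j(u_iu_j)$, which is harmless since only $\abs{p}$ enters. Also, an additive pressure constant drops out because $\int_\Omega u\cdot\nabla\varphi\dd x=0$ by divergence-freeness, not merely because that integral is bounded.
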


\begin{proof}
Normalize the pressure by \(\int_\Omega p(t,x)\dd x=0\).  On the torus,
the pressure is represented by periodic Riesz transforms,
\begin{equation}
 p=\mathcal R_i\mathcal R_j(u_i u_j),
 \qquad
 \norm{p(t)}_{3/2}\leq C_\Omega\norm{u(t)}_3^2.
 \label{eq:endpoint-pressure-bound}
\end{equation}
For \(\varphi\in C^\infty(\Omega)\), set
\(F_\varphi(t)=\frac12\int_\Omega\varphi\abs{u(t)}^2\dd x\).
The local energy identity gives
\begin{align}
 F_\varphi'(t)
 ={}&\int_\Omega\left(\frac12\abs{u}^2+p\right)
 u\cdot\nabla\varphi\dd x
 +\frac\nu2\int_\Omega\abs{u}^2\Delta\varphi\dd x\notag\\
 &-\nu\int_\Omega\varphi\abs{\nabla u}^2\dd x.
 \label{eq:localized-energy-identity}
\end{align}
The additive spatial constant in the pressure is immaterial because
\(\int_\Omega u\cdot\nabla\varphi\dd x=0\).

Periodic Sobolev interpolation yields
\begin{equation}
 \norm{u(t)}_3^4
 \leq\norm{u(t)}_2^2\norm{u(t)}_6^2
 \leq C_\Omega E_*^2
 \bigl(\norm{\nabla u(t)}_2^2+E_*^2\bigr).
 \label{eq:L4tL3x-bound}
\end{equation}
Consequently, H\"older's inequality in time gives
\begin{align}
 \int_s^t\norm{u(\rho)}_3^3\dd\rho
 &\leq(t-s)^{1/4}
 \left(\int_s^t\norm{u(\rho)}_3^4\dd\rho\right)^{3/4}\notag\\
 &\leq C_\Omega E_*^{3/2}(t-s)^{1/4}
 \bigl(D(s,t)+E_*^2(t-s)\bigr)^{3/4}.
 \label{eq:cubic-tail-control}
\end{align}
The pressure flux has the same bound, since
\begin{equation}
 \int_\Omega\abs{p}\abs{u}\dd x
 \leq\norm{p}_{3/2}\norm{u}_3
 \leq C_\Omega\norm{u}_3^3.
 \label{eq:pressure-tail-control}
\end{equation}
Integrating \eqref{eq:localized-energy-identity}, using
\eqref{eq:cubic-tail-control}, and bounding the two viscous terms by
\(D(s,t)\) and \(E_*^2(t-s)\), proves
\eqref{eq:endpoint-measure-modulus}.

Because \(D(s,t)\to0\) whenever \(T_*>t>s\uparrow T_*\), the right-hand
side of \eqref{eq:endpoint-measure-modulus} tends to zero.  Thus
\(\int\varphi\abs{u(t)}^2\) has a unique terminal limit for every smooth
\(\varphi\).  Define
\begin{equation}
 L(\varphi):=\lim_{t\uparrow T_*}
 \int_\Omega\varphi\abs{u(t)}^2\dd x.
 \label{eq:endpoint-functional}
\end{equation}
This is a positive linear functional on \(C^\infty(\Omega)\) and
\(\abs{L(\varphi)}\leq E_*^2\norm{\varphi}_\infty\).  It extends uniquely
to a positive bounded functional on \(C(\Omega)\); the Riesz representation
theorem supplies a unique \(\mu_*\in\mathcal M^+(\Omega)\).  Finally, the
uniform mass bound and density of \(C^\infty(\Omega)\) in \(C(\Omega)\)
extend convergence to every continuous test function, proving
\eqref{eq:unique-endpoint-measure}.
\end{proof}

For \(t\ge s\), let \(U(t,s)\) denote the solenoidal
advection--diffusion propagator generated by the Navier--Stokes parent,
\begin{equation}
 \partial_th+\Pp[(u\cdot\nabla)h]=\nu\Delta h,
 \qquad h(s)=h_s\in L^2_\sigma(\Omega).
 \label{eq:constrained}
\end{equation}
Let \(S(t,s)\) be the componentwise passive propagator
\begin{equation}
 \partial_tz+(u\cdot\nabla)z=\nu\Delta z,
 \qquad z(s)=z_s\in L^2(\Omega;\R^3).
 \label{eq:passive}
\end{equation}
The passive evolution need not preserve divergence.  The backward adjoint of
\eqref{eq:constrained} is
\begin{equation}
 \partial_tA+\Pp[(u\cdot\nabla)A]=-\nu\Delta A.
 \label{eq:adjoint}
\end{equation}

\begin{proposition}[Preterminal Oseen evolution family]
\label{prop:oseen-family}
For \(t_b\leq s\leq r\leq t<T_*\), the operators \(U(t,s)\) form a
strongly continuous evolution family on \(L^2_\sigma(\Omega)\):
\begin{equation}
 U(t,r)U(r,s)=U(t,s),\qquad U(s,s)=I.
 \label{eq:oseen-cocycle}
\end{equation}
For every \(h_s\in L^2_\sigma(\Omega)\),
\begin{equation}
 \norm{U(t,s)h_s}_2^2
 +2\nu\int_s^t\norm{\nabla U(\rho,s)h_s}_2^2\dd\rho
 =\norm{h_s}_2^2.
 \label{eq:oseen-forward-energy}
\end{equation}
For terminal data \(g_t\in L^2_\sigma(\Omega)\), the function
\(A(\rho)=U(t,\rho)^*g_t\) is the unique energy solution of
\eqref{eq:adjoint} on \([s,t]\), and
\begin{equation}
 \norm{A(s)}_2^2+2\nu\int_s^t\norm{\nabla A(\rho)}_2^2\dd\rho
 =\norm{g_t}_2^2.
 \label{eq:oseen-adjoint-energy}
\end{equation}
The duality relation
\begin{equation}
 \ip{U(t,s)h_s}{g_t}=\ip{h_s}{U(t,s)^*g_t}
 \label{eq:oseen-duality}
\end{equation}
holds for all such data.  In particular, \(U(t,s)\), \(U(t,s)^*\), and
\(S(t,s)\) are \(L^2\)-contractions.

For every positive delay \(s<r<T_*\), one also has
\begin{equation}
 \norm{\nabla U(r,s)h_s}_2^2
 \leq \frac{C_\Omega}{\nu(r-s)}
 \exp\!\left(\frac{C_\Omega}{\nu}
       \int_s^r\norm{u(\rho)}_\infty^2\dd\rho\right)
 \norm{h_s}_2^2.
 \label{eq:oseen-delayed-H1}
\end{equation}
Thus \(U(r,s):L^2_\sigma(\Omega)\to H^1_\sigma(\Omega)\) is bounded.
\end{proposition}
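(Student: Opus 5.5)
The plan is to treat \eqref{eq:constrained} as a linear parabolic system with a coefficient \(u\) that is smooth on each compact slab \([s,t]\subset[t_b,T_*)\), and to get every claim from energy identities on such a slab. The starting point is Galerkin approximation on divergence-free Fourier modes. For fixed \(s<t<T_*\), the drift \(u\) is smooth and bounded on \([s,t]\times\Omega\), with \(M_{s,t}:=\sup_{[s,t]}\norm{u}_{W^{1,\infty}}<\infty\). Galerkin approximations of \eqref{eq:constrained} satisfy
\[
\frac12\frac{d}{dt}\norm{h}_2^2+\nu\norm{\nabla h}_2^2=-\ip{\Pp[(u\cdot\nabla)h]}{h}=-\ip{(u\cdot\nabla)h}{h}=0 .
\]
Here \(\Pp h=h\) and \(\nabla\cdot u=0\) are used. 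This gives uniform bounds in \(L^\infty L^2\cap L^2H^1\), and \(\partial_t h\) is bounded in \(L^2H^{-1}\). By Aubin--Lions we pass to the limit and obtain an energy solution. Because \((u\cdot\nabla)h\in L^2L^2\) when \(h\in L^2H^1\), Lions--Magenes gives \(h\in C([s,t];L^2_\sigma)\) together with the energy \emph{equality} \eqref{eq:oseen-forward-energy}. Uniqueness follows from the same identity applied to the difference of two solutions. Linearity, uniqueness and continuity then give the cocycle property \eqref{eq:oseen-cocycle} and strong continuity, and \eqref{eq:oseen-forward-energy} gives contractivity.

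The backward problem \eqref{eq:adjoint} is handled the same way after the time reversal \(\rho\mapsto t+s-\rho\). This produces a unique energy solution \(A\) with terminal datum \(g_t\). Its energy identity is \eqref{eq:oseen-adjoint-energy}, since again \(\ip{\Pp[(u\cdot\nabla)A]}{A}=0\). For duality, I will differentiate \(\rho\mapsto\ip{U(\rho,s)h_s}{A(\rho)}\). This is legitimate because both factors lie in \(L^2H^1\) with time derivatives in \(L^2H^{-1}\). The derivative equals
\[
-\ip{(u\cdot\nabla)h}{A}-\ip{h}{(u\cdot\nabla)A}+\nu\ip{\Delta h}{A}-\nu\ip{h}{\Delta A}=0,
\]
where the first two terms cancel by antisymmetry of the divergence-free transport. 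Evaluating at \(\rho=s\) and \(\rho=t\) gives \eqref{eq:oseen-duality}, so \(A(\rho)=U(t,\rho)^*g_t\). The passive propagator \(S\) is treated componentwise and satisfies the same \(L^2\) energy equality, hence is a contraction. Divergence is not preserved by \(S\), but that plays no role here.

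For \eqref{eq:oseen-delayed-H1}, I will test the equation against \(-\Delta h\). The nonlinear-looking term is bounded by
\[
\abs{\ip{\Pp[(u\cdot\nabla)h]}{\Delta h}}\le\norm{u}_\infty\norm{\nabla h}_2\norm{\Delta h}_2\le\frac\nu2\norm{\Delta h}_2^2+\frac{1}{2\nu}\norm{u}_\infty^2\norm{\nabla h}_2^2 ,
\]
using that \(\Pp\) is an \(L^2\)-contraction. This yields
\[
\frac{d}{d\rho}\norm{\nabla h}_2^2\le\frac1\nu\norm{u}_\infty^2\norm{\nabla h}_2^2 .
\]
Set \(G(\rho)=\exp\bigl(-\frac1\nu\int_s^\rho\norm{u}_\infty^2\bigr)\). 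Then \(\rho\mapsto G(\rho)\norm{\nabla h(\rho)}_2^2\) is nonincreasing. Consequently
\[
(r-s)\,G(r)\norm{\nabla h(r)}_2^2\le\int_s^r G\norm{\nabla h}_2^2\dd\rho\le\frac1{2\nu}\norm{h_s}_2^2 ,
\]
where the last step uses \(G\le1\) and \eqref{eq:oseen-forward-energy}. Rearranging gives the bound with the stated exponential. Boundedness \(L^2_\sigma\to H^1_\sigma\) follows because \(\int_s^r\norm{u}_\infty^2<\infty\) for \(r<T_*\) by smoothness of the slab.

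The main obstacle is the rigorous justification of the energy equalities and of the \(H^1\) testing for rough data \(h_s\in L^2_\sigma\). I plan to carry out the \(H^1\) estimate at the Galerkin level, where \(\Pp\) commutes with the Fourier truncation, and pass to the limit by lower semicontinuity. Alternatively, I can first prove it for smooth data, where parabolic regularity with smooth coefficients applies, and extend by density using \(L^2\)-contractivity.
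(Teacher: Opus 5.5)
Your proposal is correct and follows essentially the paper's route: Galerkin energy identities via skewness of the transport term, uniqueness giving the cocycle law, duality by differentiating $\rho\mapsto\langle U(\rho,s)h_s, A(\rho)\rangle$, and testing by $-\Delta h$ for the delayed bound. The only difference is cosmetic. You average the integrating-factor-weighted enstrophy over all of $[s,r]$, whereas the paper selects a good time $\rho_0\in(s,(s+r)/2)$ from the energy identity and applies Gronwall on $[\rho_0,r]$. Both give the stated estimate, and yours comes with the explicit constant $1/(2\nu(r-s))$.
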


\begin{proof}
Fix a compact preterminal interval.  Fourier--Galerkin approximation in the
solenoidal subspace gives finite-dimensional solutions of
\eqref{eq:constrained}.  The transport term is skew in \(L^2\), because
\(\nabla\cdot u=0\), and \(\Pp\) is the orthogonal projection onto
\(L^2_\sigma\).  Testing by the Galerkin solution therefore gives
\eqref{eq:oseen-forward-energy}.  Weak compactness, uniqueness in the energy
class, and the energy identity pass to the limit.  Uniqueness also gives the
cocycle law; the standard energy argument at the initial time gives strong
continuity.  Applying the same construction backward from terminal data,
or equivalently taking adjoints of the Galerkin matrices, gives
\eqref{eq:adjoint} and \eqref{eq:oseen-adjoint-energy}.  Differentiating
\(\ip{U(\rho,s)h_s}{A(\rho)}\) at the Galerkin level and passing to the
limit proves \eqref{eq:oseen-duality}.  The passive contraction follows by
testing \eqref{eq:passive} componentwise.

It remains to record the positive-delay estimate used later.  For smooth
data, testing \eqref{eq:constrained} by \(-\Delta h\) gives
\begin{equation}
 \frac12\frac{\dd}{\dd t}\norm{\nabla h}_2^2
 +\nu\norm{\Delta h}_2^2
 \leq \norm{u}_\infty\norm{\nabla h}_2\norm{\Delta h}_2
 \leq \frac\nu2\norm{\Delta h}_2^2
      +\frac{1}{2\nu}\norm{u}_\infty^2\norm{\nabla h}_2^2.
 \label{eq:oseen-H1-differential}
\end{equation}
By \eqref{eq:oseen-forward-energy}, there is
\(\rho_0\in(s,(s+r)/2)\) such that
\[
 \norm{\nabla h(\rho_0)}_2^2
 \leq [\nu(r-s)]^{-1}\norm{h_s}_2^2.
\]
Gronwall's inequality applied to
\eqref{eq:oseen-H1-differential} on \([\rho_0,r]\) proves
\eqref{eq:oseen-delayed-H1}.  Approximation of arbitrary \(L^2\) data by
smooth solenoidal data completes the proof.
\end{proof}

For an open ball \(B\Subset\Omega\), define the local solenoidal space
\begin{equation}
 \Hh(B):=\overline{C_{c,\sigma}^\infty(B)}^{L^2(\Omega)},
 \qquad Q_B:=\operatorname{Proj}_{\Hh(B)}.
 \label{eq:local-hodge-space}
\end{equation}
Every element of \(\Hh(B)\) vanishes almost everywhere on
\(\Omega\setminus B\) and has zero spatial mean.  Indeed, for
\(\phi\in C_{c,\sigma}^\infty(B)\), integration in a Euclidean chart gives
\(\int_\Omega\phi_i\dd x=\int_\Omega\operatorname{div}(x_i\phi)\dd x=0\);
the assertion then passes to the \(L^2\)-closure.

\begin{theorem}[Atomic concentration forces full-tail solenoidal saturation]
\label{thm:atom-conveyor}
Assume \eqref{eq:NS-intro} and \eqref{eq:energy-assumptions}, and suppose
that the unique measure in Lemma~\ref{lem:unique-endpoint-measure} satisfies
\eqref{eq:atom-intro}.  Then there exist times \(\tau_j\uparrow T_*\),
nested balls \(B_{j+1}\Subset B_j\downarrow\{a\}\), orthonormal vectors
\(q_j\in L^2_\sigma(\Omega)\), and one full-tail adjoint \(A\), with the
following properties.

\begin{enumerate}[label=\textup{(\alph*)},leftmargin=2.2em]
\item With \(\Hh_j=\Hh(B_j)\),
\begin{equation}
 q_j\in\Hh_j\cap\Hh_{j+1}^\perp,
 \qquad q_j=0\ \text{a.e. on }\Omega\setminus B_j,
 \label{eq:packet-geometry}
\end{equation}
and \(q_j\) is a harmonic gradient in \(B_{j+1}\).  Moreover,
\begin{equation}
 \ip{u(\tau_j)}{q_j}\longrightarrow\sqrt m.
 \label{eq:full-mass-capture}
\end{equation}

\item For every \(S<T_*\),
\[
 A\in C([t_b,S];L^2_\sigma(\Omega))
 \cap L^2(t_b,S;H^1_\sigma(\Omega)),
 \qquad
 \partial_tA\in L^2(t_b,S;H^{-1}_\sigma(\Omega)).
\]
It solves \eqref{eq:adjoint} distributionally: for every
\(\varphi\in C_c^\infty((t_b,S)\times\Omega;\mathbb R^3)\) with
\(\nabla\cdot\varphi=0\),
\[
 \int_{t_b}^{S}\!\!\int_\Omega
 \bigl(A\cdot\partial_t\varphi
       +A\cdot(u\cdot\nabla)\varphi
       +\nu\nabla A:\nabla\varphi\bigr)\dd x\dd t=0.
\]
It satisfies
\begin{align}
 \ip{u(t)}{A(t)}&=\sqrt m,
 &&t_b<t<T_*,
 \label{eq:constant-pairing}\\
 A(t)&\weakto0\quad\text{in }L^2,
 &&t\uparrow T_*,
 \label{eq:weak-zero}\\
 \abs{A(t,x)}^2\dd x&\weakstarto\delta_a,
 &&t\uparrow T_*.
 \label{eq:adjoint-atom}
\end{align}
The parent and adjoint satisfy the full-time atomic-layer alignment identity
\begin{equation}
 \lim_{r\downarrow0}\limsup_{t\uparrow T_*}
 \int_{B_r(a)}\abs{u(t)-\sqrt m\,A(t)}^2\dd x=0.
 \label{eq:atomic-alignment}
\end{equation}

\item The moving nodes lock to the common adjoint:
\begin{equation}
 \norm{A(\tau_j)-q_j}_2\longrightarrow0.
 \label{eq:moving-locking}
\end{equation}

\item The late triangular saturation is uniform in both directions:
\begin{align}
 \sup_{k>j\ge J}
 \norm{U(\tau_k,\tau_j)^*q_k-q_j}_2&\longrightarrow0,
 \label{eq:adjoint-conveyor}\\
 \sup_{k>j\ge J}
 \norm{U(\tau_k,\tau_j)q_j-q_k}_2&\longrightarrow0
 \label{eq:forward-conveyor}
\end{align}
as \(J\to\infty\).  In addition,
\begin{equation}
 \sup_{k>j\ge J}2\nu\int_{\tau_j}^{\tau_k}
 \norm{\nabla U(t,\tau_j)q_j}_2^2\dd t\longrightarrow0,
 \label{eq:vanishing-action}
\end{equation}
and
\begin{equation}
 \sup_{k>j\ge J}2\nu\int_{\tau_j}^{\tau_k}
 \norm{\nabla U(\tau_k,t)^*q_k}_2^2\dd t\longrightarrow0.
 \label{eq:vanishing-adjoint-action}
\end{equation}

\end{enumerate}
\end{theorem}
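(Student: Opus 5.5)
The proof runs in three stages: construct the packets, extract one common adjoint, and then upgrade weak information to strong saturation through equality in Cauchy--Schwarz.

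\emph{Stage 1: the packets.} First I would fix a countable catalogue of rational balls and times, chosen independently of \(a\) and \(m\). Given the atom, I choose nested balls \(B_j\downarrow\{a\}\) from the catalogue with \(\mu_*(\partial B_j)=0\) and \(\mu_*(B_j)\downarrow m\). Lemma~\ref{lem:unique-endpoint-measure} then gives \(\int_{B_j}\abs{u(t)}^2\to\mu_*(B_j)\) along the full time variable. Next I pick \(\tau_j\) so late that \(\norm{Q_{B_j}u(\tau_j)}_2^2\geq m-\varepsilon_j\). Only after \(\tau_j\) is chosen do I shrink to \(B_{j+1}\), taking it small enough that \(\norm{Q_{B_{j+1}}u(\tau_j)}_2\leq\varepsilon_j\). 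This uses \(\Hh(B)\downarrow\{0\}\) as \(B\downarrow\{a\}\).

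Then I define \(q_j\) as the normalized vector \((Q_{B_j}-Q_{B_{j+1}})u(\tau_j)\). By construction it lies in \(\Hh_j\cap\Hh_{j+1}^\perp\). Inside \(B_{j+1}\) it is orthogonal to \(C^\infty_{c,\sigma}(B_{j+1})\), so de Rham gives that \(q_j\) is a gradient there, and since it is divergence-free that gradient is harmonic. Orthonormality follows from nestedness, possibly after the gap-two thinning. The lower bound \(\ip{u(\tau_j)}{q_j}\geq\sqrt{m}-o(1)\) comes from the local-Hodge capture. The matching upper bound \(\limsup\leq\sqrt m\) is the delicate part: it should follow from \(\norm{u(\tau_j)}_{L^2(B_j)}^2\to m\), because \(q_j\) is supported in \(B_j\). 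Together these give (a).

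\emph{Stage 2: the common adjoint.} Set \(A_k(t):=U(\tau_k,t)^*q_k\) on \([t_b,\tau_k]\). By \eqref{eq:oseen-adjoint-energy} these are uniformly bounded in the energy class on every \([t_b,S]\). A diagonal Aubin--Lions argument extracts a subsequence with \(A_k\to A\) weakly in \(L^2H^1\) and strongly in \(C([t_b,S];H^{-1})\). The limit \(A\) solves \eqref{eq:adjoint} distributionally, since \(u\) is smooth before \(T_*\).

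Duality \eqref{eq:oseen-duality} applied to the nonlinear equation does not directly give conserved pairings. However, \(\ip{u(t)}{A_k(t)}\) satisfies
\[
\frac{\dd}{\dd t}\ip{u}{A_k}=-\ip{\Pp[(u\cdot\nabla)u]}{A_k}+\ip{u}{\Pp[(u\cdot\nabla)A_k]},
\]
and the right-hand side vanishes by skew-symmetry (\(\ip{(u\cdot\nabla)u}{A}=-\ip{u}{(u\cdot\nabla)A}\)). So \(u\) itself is a forward solution of \eqref{eq:constrained}, i.e.\ \(u(t)=U(t,s)u(s)\). Hence \(\ip{u(t)}{A_k(t)}=\ip{u(\tau_k)}{q_k}\to\sqrt m\), and in the limit \eqref{eq:constant-pairing} holds.

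For the terminal behaviour of \(A\), I would use a reverse-Oseen localization. The adjoint started from data supported in \(B_k\) keeps, at times close to \(\tau_k\), most of its mass in slightly larger balls. This follows from local energy estimates for \eqref{eq:adjoint} with cutoffs, controlled by the tail quantity \(D\) and \(E_*\). The estimate gives \(\limsup_{t\uparrow T_*}\int_{\Omega\setminus B_r(a)}\abs{A(t)}^2=0\) and \(\norm{A(t)}_2\leq1\).

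\emph{Stage 3: saturation.} Cauchy--Schwarz on \(B_r(a)\) gives
\[
\sqrt m=\ip{u(t)}{A(t)}\leq\norm{u(t)}_{L^2(B_r)}\norm{A(t)}_{L^2(B_r)}+o(1)\leq\sqrt{\mu_*(\overline{B_r})}+o(1).
\]
Letting \(r\to0\) forces equality throughout, which yields three things: \(\norm{A(t)}_2\to1\), the alignment \eqref{eq:atomic-alignment}, and \eqref{eq:adjoint-atom}. Property \eqref{eq:weak-zero} follows from the alignment combined with concentration at a point. Monotonicity of \(\norm{A(t)}_2\) from \eqref{eq:oseen-adjoint-energy} then gives \(\norm{A(t)}_2=1\) and \(\nabla A\equiv0\)-dissipation. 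More precisely, \(2\nu\int_t^{T_*}\norm{\nabla A}^2\to0\) in the relevant limit sense.

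For locking (c), observe that \(\ip{A(\tau_j)}{q_j}\to1\) by the alignment, because \(q_j\approx u(\tau_j)/\sqrt m\) on small balls. Two unit vectors with inner product tending to \(1\) converge in norm. For (d), I would write \(U(\tau_k,\tau_j)^*q_k-q_j\) as \((A_k-A)(\tau_j)+(A(\tau_j)-q_j)\). The first term is handled by strong convergence, which comes from norm saturation: weak convergence plus convergence of norms gives strong convergence. The forward estimate \eqref{eq:forward-conveyor} then follows from \(\ip{U(\tau_k,\tau_j)q_j}{q_k}=\ip{q_j}{U^*q_k}\to1\) together with the contraction property. The vanishing actions \eqref{eq:vanishing-action} and \eqref{eq:vanishing-adjoint-action} are the energy deficits in the identities \eqref{eq:oseen-forward-energy} and \eqref{eq:oseen-adjoint-energy}.

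\emph{Main obstacle.} I expect the hardest step to be uniformity over \(k>j\geq J\), which requires the strong convergence of \(A_k\) to be uniform. To get it, I would first derive a quantitative reverse localization. I would then use a uniform Cauchy bound: \(\norm{A_k(\tau_j)-A_{k'}(\tau_j)}^2=2-2\ip{A_k}{A_{k'}}\), and the pairing with \(u\) shows that both vectors are nearly parallel to \(u(\tau_j)/\sqrt m\) on small balls.
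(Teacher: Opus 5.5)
\textbf{The gap is the uniformity in (d).} You split
$U(\tau_k,\tau_j)^*q_k-q_j=(A_k-A)(\tau_j)+(A(\tau_j)-q_j)$, so the first term is evaluated at the moving time $\tau_j$. Weak convergence plus convergence of norms at a fixed time only gives $\norm{A_k(t)-A(t)}_2\to0$ for each fixed $t$, that is, for fixed $j$ as $k\to\infty$. It says nothing about $\sup_{k>j\ge J}\norm{(A_k-A)(\tau_j)}_2$.

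The repair you sketch under ``main obstacle'' does not work as written. The identity $\norm{A_k(\tau_j)-A_{k'}(\tau_j)}_2^2=2-2\ip{A_k(\tau_j)}{A_{k'}(\tau_j)}$ presupposes $\norm{A_k(\tau_j)}_2=1$. In fact $\norm{A_k(\tau_j)}_2^2=1-2\nu\int_{\tau_j}^{\tau_k}\norm{\nabla A_k}_2^2\dd t$, and uniform smallness of that deficit is precisely \eqref{eq:vanishing-adjoint-action}, one of the conclusions, so the argument is circular. Your intermediate claim that $\norm{A(t)}_2=1$ with vanishing dissipation is also false. Only $\norm{A(t)}_2\uparrow1$ holds, and a field concentrating at $a$ cannot have $\nabla A\equiv0$.

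\textbf{How the paper closes it.} The paper proves (d) before (c). For $j<k$, it applies the backward energy identity to $A_j-A_k$ on $[t_0,\tau_j]$, with the root $t_0=t_b$ fixed. This bounds $\norm{q_j-U(\tau_k,\tau_j)^*q_k}_2^2$ by $\norm{X_j-X_k}^2$, where $X_j=(A_j(t_0),\sqrt{2\nu}\,\one_{(t_0,\tau_j)}\nabla A_j)$ lies in $L^2_\sigma(\Omega)\oplus L^2((t_0,T_*)\times\Omega)$. Each $X_j$ has norm one. For the weak limit, $d_A=1$ gives $\norm{A(t_0)}_2^2+2\nu\int_{t_0}^{T_*}\norm{\nabla A}_2^2\dd t=1$. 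Hence $X_j$ converges strongly, the triangle supremum is controlled by a Cauchy sequence, and locking then follows as a corollary.

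\textbf{How your own ordering can be completed.} Your route, locking first, can be finished more cheaply. Your locking argument is sound, and the upper bound you call delicate is automatic: full-time convergence gives $\limsup_j\norm{u(\tau_j)}_{L^2(B_j)}^2\le\mu_*(\overline{B_r(a)})$ for every $r$, hence $\norm{u(\tau_j)\one_{B_j}-\sqrt m\,q_j}_2\to0$. Uniqueness of energy solutions then gives $A(\tau_j)=U(\tau_k,\tau_j)^*A(\tau_k)$. Contractivity therefore yields $\norm{(A_k-A)(\tau_j)}_2\le\norm{q_k-A(\tau_k)}_2$, and so
$\sup_{k>j\ge J}\norm{U(\tau_k,\tau_j)^*q_k-q_j}_2\le2\sup_{j\ge J}\norm{A(\tau_j)-q_j}_2\to0$.

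This one observation is what your proposal is missing. With it, the forward estimate and both action bounds follow from duality and the energy deficits exactly as you say, and the direct-sum saturation is no longer needed.
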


\begin{definition}[Full-tail saturated Hodge family]
\label{def:perfect-conveyor}
For a fixed Navier--Stokes parent \(u\), a sequence
\((\tau_j,B_j,q_j)\) is a saturated Hodge family if
\(\tau_j\) is strictly increasing with \(\tau_j\uparrow T_*\), the balls
satisfy \(B_{j+1}\Subset B_j\) and \(\operatorname{diam}B_j\to0\), and the
packets are orthonormal, satisfy \eqref{eq:packet-geometry}, are harmonic
gradients in \(B_{j+1}\), and obey \eqref{eq:forward-conveyor}.  The family is called full-tail saturated if it
also obeys
\eqref{eq:adjoint-conveyor}, \eqref{eq:vanishing-action},
and \eqref{eq:vanishing-adjoint-action}.
\end{definition}

\begin{corollary}[Constrained--passive separation]
\label{cor:passive-separation}
Under the hypotheses of Theorem~\ref{thm:atom-conveyor}, the saturated family
may be chosen so that, for some \(\eta_j\downarrow0\),
\begin{equation}
 \abs{\ip{S(\tau_j,\tau_i)q_i}{q_j}}\le\eta_j,
 \qquad i<j.
 \label{eq:passive-avoidance}
\end{equation}
Consequently, with \(U_j=U(\tau_{j+1},\tau_j)\) and
\(S_j=S(\tau_{j+1},\tau_j)\),
\begin{equation}
 \operatorname{Re}\ip{(U_j-S_j)q_j}{q_{j+1}}\longrightarrow1.
 \label{eq:leray-passive-separation}
\end{equation}
This is an operator-theoretic unit separation; it is not identified with a
pressure norm or a pressure measure.
\end{corollary}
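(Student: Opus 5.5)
We prove Corollary~\ref{cor:passive-separation} by passing to a subsequence of the saturated family from Theorem~\ref{thm:atom-conveyor}, chosen so that \eqref{eq:passive-avoidance} holds. We then combine this with the forward saturation \eqref{eq:forward-conveyor} for consecutive indices. Any subsequence \((\tau_{j_\ell},B_{j_\ell},q_{j_\ell})\) of the family is again admissible once we repair the packet geometry. Since \(q_{j_\ell}\in\Hh_{j_\ell}\cap\Hh_{j_\ell+1}^\perp\) and \(\Hh_{j_{\ell+1}}\subset\Hh_{j_\ell+1}\), we have \(q_{j_\ell}\perp\Hh_{j_{\ell+1}}\), and \(q_{j_\ell}\) is a harmonic gradient on \(B_{j_{\ell+1}}\subset B_{j_\ell+1}\). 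So \eqref{eq:packet-geometry} and the harmonic-gradient property persist with the balls \(B_{j_\ell}\). The uniform triangular estimates \eqref{eq:adjoint-conveyor}--\eqref{eq:vanishing-adjoint-action} are suprema over \(k>j\ge J\), so they also restrict to subsequences, and \eqref{eq:full-mass-capture} and \eqref{eq:moving-locking} obviously pass to subsequences.

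The key step is a weak-decay property of the packets: \(q_j\weakto0\) in \(L^2\). This holds because the \(q_j\) are orthonormal, and it can also be seen from the facts that \(q_j\) is supported in \(B_j\), \(\operatorname{diam}B_j\to0\), and \(\norm{q_j}_2=1\). Fix \(i\). The vector \(S(\tau_j,\tau_i)q_i\) depends on \(j\), so orthonormality alone is not enough, and we use compactness of the passive flow. By Proposition~\ref{prop:oseen-family}, \(S\) is a contraction, and \(t\mapsto S(t,\tau_i)q_i\) is continuous into \(L^2\) on \([\tau_i,T_*)\). Parabolic smoothing for the passive equation, argued as in \eqref{eq:oseen-delayed-H1} with \(u\) smooth on compact preterminal intervals, makes the set \(\{S(t,\tau_i)q_i: t\in[\tau_i,S]\}\) precompact for each \(S<T_*\). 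As \(t\uparrow T_*\), however, we cannot rely on smoothness of \(u\).

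To get around this we select the subsequence inductively. Given the chosen indices \(j_1<\dots<j_\ell\) and a tolerance \(\eta_\ell\downarrow0\), we pick \(j_{\ell+1}\) large enough that \(\abs{\ip{S(\tau_{j_{\ell+1}},\tau_{j_p})q_{j_p}}{q_{j_{\ell+1}}}}\le\eta_{\ell+1}\) for all \(p\le\ell\). To show such \(j_{\ell+1}\) exists, we use the bound \(\norm{S(t,\tau_{j_p})q_{j_p}}_2\le1\) together with an \(L^2\) estimate of the passive field localized on \(B_k\): we test against \(q_k\), which is supported in \(B_k\), and use the vanishing \(\abs{B_k}\to0\) with uniform \(L^\infty\)-type control. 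The tool here is the maximum principle for the componentwise passive equation, which gives \(\norm{S(t,s)z}_\infty\le\norm{z}_\infty\). So we first approximate \(q_{j_p}\) in \(L^2\) by a bounded field \(z\) to within \(\eta_{\ell+1}/2\). Then \(\abs{\ip{S(t,\tau_{j_p})z}{q_k}}\le\norm{z}_\infty\abs{B_k}^{1/2}\) uniformly in \(t\), and this tends to \(0\) as \(k\to\infty\). This uniform-in-time bound is the main obstacle, and the maximum principle plus the shrinking support of \(q_k\) resolves it. After relabeling, we obtain \eqref{eq:passive-avoidance}.

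For the separation, we write \(\ip{(U_j-S_j)q_j}{q_{j+1}}=\ip{U_jq_j}{q_{j+1}}-\ip{S_jq_j}{q_{j+1}}\). The second term is at most \(\eta_{j+1}\to0\) by \eqref{eq:passive-avoidance} with \(i=j\). For the first term, \eqref{eq:forward-conveyor} with \(k=j+1\) gives \(\norm{U_jq_j-q_{j+1}}_2\to0\), hence \(\ip{U_jq_j}{q_{j+1}}\to\norm{q_{j+1}}_2^2=1\). This proves \eqref{eq:leray-passive-separation}. Since the spaces are real, \(\operatorname{Re}\) is harmless.
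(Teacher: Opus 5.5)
Your argument is correct. The separation step is exactly the paper's: forward saturation with \(k=j+1\), minus the passive bound with \(i=j\).

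Where you differ is in how \eqref{eq:passive-avoidance} is obtained.

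The paper builds it into the catalogued selection of Proposition~\ref{prop:catalog-selection}. When \(B_j\) is chosen, only the packets \(q_i\) with \(i\le j-2\) exist. The drift-independent Nash bound of Lemma~\ref{lem:nash-smoothing} (which needs the positive delay \(t\ge h_j>\tau_i\)) makes \(\norm{S(t,\tau_i)q_i}_\infty\) uniformly bounded, so shrinking \(B_j\) gives \eqref{eq:avoidance-selection}. Because \(q_{j-1}\) is still pending at that stage, this covers only \(i\le j-2\). That is why the explicit gap-two thinning and Lemma~\ref{lem:frozen-thinning} are performed before the adjoint is extracted.

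You instead thin the finished family a posteriori. You split \(q_i=z+(q_i-z)\) with \(z\) bounded. \(L^2\)-contractivity handles the remainder, and the \(L^\infty\) maximum principle together with \(\norm{q_k}_1\le\abs{B_k}^{1/2}\) handles \(z\), uniformly in time. Hence \(\ip{S(\tau_k,\tau_i)q_i}{q_k}\to0\) for each fixed \(i\), and a diagonal choice over finitely many earlier indices gives the uniform bound. This is legitimate because of the gap-one Hodge inheritance you verify (the same inclusion argument as in Lemma~\ref{lem:frozen-thinning}), and because the triangular suprema and the adjoint \(A\) are unaffected by passing to a subsequence. Your remarks on \(q_j\weakto0\) and on precompactness of passive orbits play no role and can be dropped.

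What each route buys:
\begin{itemize}
\item \emph{Your route} is more elementary. It needs no Nash smoothing, no time delay, and no pending-packet bookkeeping, and it applies to any saturated family with \(\abs{B_j}\to0\). Its thresholds, however, depend on a non-explicit \(L^2\)-approximation of \(q_i\) by bounded fields.
\item \emph{The paper's route} fixes the avoidance with preassigned tolerances and an explicit modulus in terms of \(\abs{B_j}\) and the Nash constant. It does so before any compactness is invoked, in keeping with the paper's ``selection before extraction'' ordering.
\end{itemize}
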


For a fixed root \(J\), put
\begin{equation}
 H^J(t):=U(t,\tau_J)q_J,
 \qquad
 \Gamma_J(t):=\norm{\nabla H^J(t)}_2^2,
 \qquad
 \Kk_J(t):=\norm{\Delta H^J(t)}_2^2.
 \label{eq:root-definitions}
\end{equation}
The enstrophy production is
\begin{equation}
 \mathcal P_J(t)
 :=\ip{\Delta H^J(t)}{\Pp[(u(t)\cdot\nabla)H^J(t)]}.
 \label{eq:production-definition}
\end{equation}
Equivalently, integration by parts gives
\begin{equation}
 \mathcal P_J(t)
 =-\int_\Omega \partial_\ell u_k\,\partial_kH_i^J\,
 \partial_\ell H_i^J\dd x,
 \label{eq:production-tensor}
\end{equation}
and
\begin{equation}
 \frac12\Gamma_J'(t)+\nu\Kk_J(t)=\mathcal P_J(t).
 \label{eq:enstrophy-identity}
\end{equation}

For \(t_b\leq s<r<T_*\), define the delayed solenoidal
second-order action budget of the parent flow by
\begin{equation}
 \mathfrak R_u(s,r):=
 \sup_{\substack{h_s\in L^2_\sigma(\Omega)\\\norm{h_s}_2=1}}
 \int_r^{T_*}\norm{\Delta U(t,s)h_s}_2^2\dd t
 \in[0,\infty].
 \label{eq:parent-budget}
\end{equation}
This quantity is fixed by \(u,s,r\); it does not use an endpoint atom, a
packet family, or an adapted weight.  Although the witness pairs below arise
from the atomic catalogue, the budget evaluated at each pair is defined solely
by the parent propagator.  Thus \(\mathfrak R_u(s,r)^{1/2}\) is the
corresponding extended operator norm of
\(h_s\mapsto\Delta U(\cdot,s)h_s\), with the value \(+\infty\) allowed.
Whenever it is finite, this map is a bounded operator from
\(L^2_\sigma(\Omega)\) to \(L^2((r,T_*);L^2(\Omega))\), and its usual operator
norm is exactly \(\mathfrak R_u(s,r)^{1/2}\).  On the torus,
\(\norm{\Delta h}_2\) is the homogeneous second-order Sobolev norm of the
mean-free component; the spatial mean is preserved by \(U(t,s)\).

\begin{corollary}[Parent-only delayed second-order-action obstruction]
\label{cor:parent-budget}
Under the hypotheses of Theorem~\ref{thm:atom-conveyor}, every sufficiently
late fixed root satisfies
\begin{equation}
 \mathfrak R_u(\tau_J,\tau_{J+1})=\infty.
 \label{eq:parent-budget-failure}
\end{equation}
Indeed, the unit vector \(q_J\) itself realizes an infinite integral in
\eqref{eq:parent-budget}.  Consequently, if there exists \(t_c<T_*\) such
that
\begin{equation}
 \mathfrak R_u(s,r)<\infty
 \qquad\text{for every }t_c\leq s<r<T_*,
 \label{eq:parent-budget-criterion}
\end{equation}
then the endpoint measure \(\mu_*\) is atomless.  In particular, \eqref{eq:parent-budget-criterion} holds whenever
\begin{equation}
 u\in L^p(t_c,T_*;L^q(\Omega)),
 \qquad 3<q\leq\infty,
 \qquad \frac2p+\frac3q\leq1.
 \label{eq:serrin-budget-class}
\end{equation}
\end{corollary}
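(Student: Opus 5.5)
The plan is to test the budget \eqref{eq:parent-budget} on the single unit vector \(h_s=q_J\). I will show that \(\int_{\tau_{J+1}}^{T_*}\Kk_J(t)\dd t=\infty\) for every late root \(J\) by contradiction. The criterion and the Serrin statement then follow.

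\emph{Step 1: what the root trajectory must do.}
Fix \(J\) so large that the forward saturation \eqref{eq:forward-conveyor} gives
\[
 \varepsilon_J:=\sup_{k>j\geq J}\norm{U(\tau_k,\tau_j)q_j-q_k}_2<\tfrac12 .
\]
Taking \(j=J\), the root trajectory \(H^J(\tau_k)=U(\tau_k,\tau_J)q_J\) stays within \(\varepsilon_J\) of \(q_k\) for every \(k>J\). The \(q_k\) are orthonormal, so \(q_k\weakto0\) in \(L^2\). Hence every weak limit point of \(H^J(\tau_k)\) has norm at most \(\varepsilon_J\). On the other hand, \(\norm{H^J(\tau_k)}_2\geq1-\varepsilon_J\). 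So it suffices to prove that finiteness of \(\int_{\tau_{J+1}}^{T_*}\Kk_J\) forces \(H^J(t)\) to converge \emph{strongly} in \(L^2\) as \(t\uparrow T_*\). The strong limit \(h_*\) would satisfy both \(\norm{h_*}_2\le\varepsilon_J\) and \(\norm{h_*}_2\ge1-\varepsilon_J\), which is impossible since \(\varepsilon_J<\tfrac12\).

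\emph{Step 2: strong terminal convergence from finite second-order action (main step).}
Assume \(\int_{\tau_{J+1}}^{T_*}\Kk_J<\infty\). The spatial mean of \(H^J\) is that of \(q_J\), namely zero. Interpolation gives \(\Gamma_J\leq\norm{H^J}_2\Kk_J^{1/2}\leq\Kk_J^{1/2}\). Write \(\partial_tH^J=\nu\Delta H^J-\Pp[(u\cdot\nabla)H^J]\). The first term lies in \(L^2(\tau_{J+1},T_*;L^2)\), hence in \(L^1\) in time on the finite tail. For the transport term I use
\[
 \norm{(u\cdot\nabla)H^J}_2\leq\norm{u}_6\norm{\nabla H^J}_3
 \leq C\bigl(\norm{\nabla u}_2+E_*\bigr)\Gamma_J^{1/4}\Kk_J^{1/4}
 \leq C\bigl(\norm{\nabla u}_2+E_*\bigr)\Kk_J^{3/8}.
\]
Here \(\norm{\nabla u}_2\in L^2_t\) by \eqref{eq:energy-assumptions} and \(\Kk_J^{3/8}\in L^{8/3}_t\). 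By H\"older the product lies in \(L^{8/7}_t\subset L^1_t\) on \((\tau_{J+1},T_*)\). Thus \(\partial_tH^J\in L^1(\tau_{J+1},T_*;L^2)\), so \(H^J(t)\) is Cauchy in \(L^2\) as \(t\uparrow T_*\) and has a strong limit \(h_*\). Together with Step 1 this proves \eqref{eq:intro-root-action}, and therefore \eqref{eq:parent-budget-failure}. The delicate point is exactly this step. One must control the transport term using only the parent energy class and the assumed \(L^2_tH^2\) bound on \(H^J\), without any uniform \(H^1\) bound for \(H^J\). The interpolation \(\Gamma_J\le\Kk_J^{1/2}\) is what avoids needing one.

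Nonintegrable positive enstrophy production would be recorded from \eqref{eq:enstrophy-identity}. Integrating from \(\tau_{J+1}\), and using \(\Gamma_J\geq0\) together with the finite delayed bound \eqref{eq:oseen-delayed-H1} at \(\tau_{J+1}\), gives \(\int\mathcal P_J^+\geq\nu\int\Kk_J-\tfrac12\Gamma_J(\tau_{J+1})=\infty\).

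\emph{Step 3: criterion and Serrin class.}
If \eqref{eq:parent-budget-criterion} held while \(\mu_*\) had an atom, choose \(J\) late with \(\tau_J\geq t_c\). This contradicts \eqref{eq:parent-budget-failure}, so \(\mu_*\) is atomless. For \eqref{eq:serrin-budget-class}, fix \(t_c\le s<r<T_*\) and a unit \(h_s\), and set \(h=U(\cdot,s)h_s\). Repeat the \(-\Delta h\) energy estimate of Proposition~\ref{prop:oseen-family}, but bound the production by
\[
 \norm{u}_q\norm{\nabla h}_{2q/(q-2)}\norm{\Delta h}_2
 \leq C\norm{u}_q\norm{\nabla h}_2^{1-3/q}\norm{\Delta h}_2^{1+3/q}
 \leq \tfrac\nu2\norm{\Delta h}_2^2+C_\nu\norm{u}_q^{2q/(q-3)}\norm{\nabla h}_2^2 ,
\]
with the obvious modification when \(q=\infty\). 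Since \(2/p+3/q\leq1\) and the interval is finite, \(\norm{u}_q^{2q/(q-3)}\in L^1(t_c,T_*)\). Choose \(\rho_0\in(s,r)\) with \(\norm{\nabla h(\rho_0)}_2^2\leq[\nu(r-s)]^{-1}\), which is possible by \eqref{eq:oseen-forward-energy}. Gronwall on \([\rho_0,T_*)\) then bounds \(\sup\Gamma\), and integrating the inequality bounds \(\nu\int_r^{T_*}\norm{\Delta h}_2^2\). Both bounds are uniform over unit \(h_s\), so \(\mathfrak R_u(s,r)<\infty\).
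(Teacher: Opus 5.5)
Your proof is correct, but its central step, the divergence of the root action \(\int_{\tau_{J+1}}^{T_*}\Kk_J\dd t\), takes a different route from the paper.

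\textbf{The paper's route.} The paper works cellwise in Lemma~\ref{lem:cell-H2-action}. On each \(I_j=[\tau_j,\tau_{j+1}]\) it has a displacement lower bound \(\norm{H(\tau_{j+1})-H(\tau_j)}_2\geq\sqrt2-\tfrac14\), which comes from orthogonality of consecutive packets and forward saturation. It compares this with the upper bound \(C\widetilde d_j^{1/2}a_j^{1/4}K_j^{1/4}+\nu\ell_j^{1/2}K_j^{1/2}\), where \(a_j\) is the first-order action of \(H\) on the cell. Summability of \(d_j\), \(a_j\), \(\ell_j\) then gives \(K_j\geq\min\{c_1\widetilde d_j^{-2}a_j^{-1},c_2\nu^{-2}\ell_j^{-1}\}\to\infty\).

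\textbf{Your route.} You argue globally by contradiction. The interpolation \(\Gamma_J\leq\norm{H^J}_2\Kk_J^{1/2}\leq\Kk_J^{1/2}\) replaces the first-order action. A finite tail budget then puts \(\partial_tH^J\) in \(L^1(\tau_{J+1},T_*;L^2)\) and produces a strong terminal limit. That limit cannot coexist with tracking the weakly null orthonormal packets.

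\textbf{What each buys.} Your version is shorter and isolates a clean principle: a finite delayed budget forces \(U(t,s)h_s\) to converge strongly in \(L^2\) as \(t\uparrow T_*\). However, it yields only \(\sum_jK_j=\infty\). The paper's cellwise rate gives the stronger conclusion \(K_j\to\infty\) with an explicit lower bound. That bound is what Lemma~\ref{lem:weight-sequence} needs to build summable weights with \(\beta_jK_j\geq1\) for the appendix's endpoint test-space obstruction.

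\textbf{Serrin part.} You start Gronwall at a good time \(\rho_0\in(s,r)\) chosen from the energy identity. The paper instead invokes \eqref{eq:oseen-delayed-H1} at \(r\), which involves \(\int_s^r\norm{u}_\infty^2\). Your choice is equally valid and keeps the quantitative bound independent of \(\norm{u}_\infty\), depending only on \(\nu\), the time lengths, and the Serrin norm of \(u\).
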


A secondary family-adapted endpoint test-space obstruction is stated and
proved in Section~\ref{sec:endpoint-tests}.

\section{A catalogued level-crossing chain and local Hodge packets}
\label{sec:level-crossing}

We first separate the selection mechanism from the later adjoint extraction.
Fix once and for all a countable dense set of centers in \(\Omega\), a
countable dense set of radii below the injectivity radius, rational buffer
ratios in \((0,1/8)\), positive rational levels \(\theta\), guard and high times from a fixed
countable dense subset of \((t_b,T_*)\), and a
cutoff template \(\chi_0\in C_c^\infty(B_2(0))\) with
\(0\le\chi_0\le1\) and \(\chi_0=1\) on \(B_1(0)\).  For admissible
\((z,R,\beta)\), write
\begin{equation}
 \chi_{z,R,\beta}(x)
 =\chi_0\!\left(\frac{\exp_z^{-1}x}{\beta R}\right),
 \qquad
 M_\chi(t)=\int_\Omega\chi\abs{u(t)}^2\dd x.
 \label{eq:catalog-cutoff}
\end{equation}
Only parameters for which the support lies in a member of a fixed finite
Euclidean atlas are retained.

Given guard and high times \(h<H<T_*\), an item is eligible when
\begin{equation}
 M_\chi(h)<\theta<M_\chi(H).
 \label{eq:catalog-crossing}
\end{equation}
For an eligible item define the first level crossing
\begin{equation}
 \tau=\min\{t\in[h,H]:M_\chi(t)=\theta\}.
 \label{eq:first-crossing}
\end{equation}
For every time \(\sigma\in(\tau,H)\) belonging to the same dense
preterminal set, the augmented tuple is retained as a catalogue item.  The
resulting catalogue
is countable and is determined by the preterminal orbit and the fixed
parameter skeleton.  It is defined without using \(\mu_*,a\), or \(m\).

\begin{lemma}[Uniform absolute continuity on compact orbit segments]
\label{lem:uniform-ac}
For every \(h<T_*\), the set
\(\{u(t):t_b\le t\le h\}\) is compact in \(L^2(\Omega)\).  Consequently,
if measurable sets \(E_n\subset\Omega\) satisfy \(\abs{E_n}\to0\), then
\begin{equation}
 \sup_{t_b\le t\le h}\norm{u(t)}_{L^2(E_n)}\longrightarrow0.
 \label{eq:uniform-ac}
\end{equation}
\end{lemma}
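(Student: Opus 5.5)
The plan is to prove compactness first and then deduce \eqref{eq:uniform-ac} by a standard equi-integrability argument. For compactness, the natural route is to show that \(t\mapsto u(t)\) is continuous from \([t_b,h]\) into \(L^2(\Omega)\). The image of a compact interval under a continuous map is compact, so this would finish the first assertion. Continuity itself should be immediate from the standing hypothesis: \(u\) is smooth on \([t_b,T_*)\times\Omega\), and \([t_b,h]\times\Omega\) is a compact subset of this domain. Hence \(u\) is uniformly continuous there, and \(\sup_x\abs{u(t,x)-u(s,x)}\to0\) as \(s\to t\). Since \(\abs{\Omega}<\infty\), this gives \(\norm{u(t)-u(s)}_2\le\abs{\Omega}^{1/2}\sup_x\abs{u(t,x)-u(s,x)}\to0\). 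If one wanted to avoid smoothness up to \(t_b\), one could instead invoke the energy-class continuity \(u\in C([t_b,h];L^2)\) used in Proposition~\ref{prop:oseen-family}; either route suffices.

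For the consequence, fix \(\varepsilon>0\). By compactness, choose a finite \(\varepsilon\)-net \(u(t_1),\dots,u(t_N)\) of \(\{u(t):t_b\le t\le h\}\) in \(L^2\). Each single function \(\abs{u(t_i)}^2\in L^1(\Omega)\) is absolutely continuous with respect to Lebesgue measure. So there is \(\delta>0\) such that \(\abs{E}<\delta\) implies \(\norm{u(t_i)}_{L^2(E)}<\varepsilon\) for all \(i\le N\) simultaneously. For arbitrary \(t\in[t_b,h]\), pick \(i\) with \(\norm{u(t)-u(t_i)}_2<\varepsilon\). The triangle inequality in \(L^2(E)\) then gives \(\norm{u(t)}_{L^2(E)}<2\varepsilon\) whenever \(\abs{E}<\delta\). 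Applying this to \(E=E_n\) with \(n\) large yields \eqref{eq:uniform-ac}.

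In the smooth setting there is actually a shortcut: \(M:=\sup_{[t_b,h]\times\Omega}\abs{u}<\infty\), so \(\norm{u(t)}_{L^2(E_n)}\le M\abs{E_n}^{1/2}\) uniformly in \(t\). I would mention this as a remark but keep the compactness argument, since compactness is the part that is reused later. There is no genuine obstacle here. The only point needing care is that the bound on \([t_b,h]\) uses \(h<T_*\) strictly, so the estimates never reach the terminal time, where compactness may fail precisely because of the atom.
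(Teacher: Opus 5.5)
Your proposal is correct and follows essentially the same route as the paper: the strong \(L^2\)-continuity of the smooth trajectory on \([t_b,h]\) gives compactness of the orbit segment, and a finite \(L^2\)-net, absolute continuity of the finitely many integrals \(\int_E\abs{u(t_i)}^2\dd x\), and the triangle inequality (the paper's ``absorbing twice the net error'') yield \eqref{eq:uniform-ac}. Your shortcut via \(\sup_{[t_b,h]\times\Omega}\abs{u}<\infty\) is also valid, because \(u\) is smooth on \([t_b,T_*)\times\Omega\), \(h<T_*\), and \(\Omega\) is compact, so \([t_b,h]\times\Omega\) is compact.
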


\begin{proof}
The first assertion follows from the strong \(L^2\)-continuity of the smooth
preterminal trajectory.  Cover its compact image by a finite \(L^2\)-net.
The integral of each net point is absolutely continuous in the spatial
measure; taking the maximum over the finite net and absorbing twice the net
error gives \eqref{eq:uniform-ac}.
\end{proof}

\begin{lemma}[Nested local-Hodge structure]
\label{lem:nested-hodge}
Let \(B'\Subset B\Subset\Omega\) be balls contained in a Euclidean chart.
Then \(\Hh(B')\subset\Hh(B)\), and their orthogonal projections satisfy
\begin{equation}
 Q_{B'}Q_B=Q_BQ_{B'}=Q_{B'}.
 \label{eq:nested-projections}
\end{equation}
Consequently,
\begin{equation}
 D_{B,B'}:=Q_B-Q_{B'}
 \label{eq:hodge-difference-projection}
\end{equation}
is the orthogonal projection onto
\(\Hh(B)\cap\Hh(B')^\perp\), and
\begin{equation}
 \norm{D_{B,B'}v}_2^2
 =\norm{Q_Bv}_2^2-\norm{Q_{B'}v}_2^2
 \qquad(v\in L^2_\sigma(\Omega)).
 \label{eq:hodge-pythagoras}
\end{equation}
Moreover, if \(q\in\Hh(B)\cap\Hh(B')^\perp\), then on \(B'\) there is a
harmonic distribution \(h\), smooth in the interior, such that
\(q=\nabla h\).
\end{lemma}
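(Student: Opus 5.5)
The inclusion \(\Hh(B')\subset\Hh(B)\) comes straight from the definition \eqref{eq:local-hodge-space}. Since \(B'\subset B\), we have \(C^\infty_{c,\sigma}(B')\subset C^\infty_{c,\sigma}(B)\). Taking \(L^2(\Omega)\)-closures preserves this inclusion.

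The projection identities then follow from general Hilbert-space facts about nested closed subspaces. For any \(v\), the vector \(Q_{B'}v\) already lies in \(\Hh(B)\), so \(Q_BQ_{B'}=Q_{B'}\). Taking adjoints, and using that orthogonal projections are self-adjoint, gives \(Q_{B'}Q_B=Q_{B'}\).

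With these relations, \(D_{B,B'}=Q_B-Q_{B'}\) is self-adjoint. It is also idempotent: expanding \(D_{B,B'}^2\) gives \(Q_B-Q_{B'}-Q_{B'}+Q_{B'}=D_{B,B'}\). Hence \(D_{B,B'}\) is an orthogonal projection.

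Its range is \(\Hh(B)\cap\Hh(B')^\perp\), by a two-way inclusion. If \(w\) lies in this intersection, then \(D_{B,B'}w=w-0=w\). Conversely, for any \(v\), the vector \(D_{B,B'}v\) lies in \(\Hh(B)\), and applying \(Q_{B'}\) to it gives \(Q_{B'}v-Q_{B'}v=0\), so it is orthogonal to \(\Hh(B')\).

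The Pythagoras identity \eqref{eq:hodge-pythagoras} follows from the orthogonal splitting \(Q_Bv=Q_{B'}v+D_{B,B'}v\), whose two summands are orthogonal.

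The substantive part is the harmonic-gradient representation; I would argue it in the Euclidean chart containing \(B\). Let \(q\in\Hh(B)\cap\Hh(B')^\perp\). Because \(q\in\Hh(B)\), it is weakly divergence-free on \(\Omega\), and in particular \(\nabla\cdot q=0\) in \(\mathcal D'(B')\). Orthogonality to \(\Hh(B')\) says that \(\ip{q}{\phi}=0\) for every \(\phi\in C^\infty_{c,\sigma}(B')\).

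By de Rham's theorem (the form in Temam's or Galdi's treatment of the Navier--Stokes equations), a distribution annihilating all compactly supported solenoidal test fields on a domain is a gradient. Thus \(q=\nabla h\) for some \(h\in\mathcal D'(B')\). Since \(B'\) is a ball, which is Lipschitz and simply connected, and \(q\in L^2\), Nečas' inequality even gives \(h\in L^2(B')\).

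Combining the two facts gives \(\Delta h=\nabla\cdot q=0\) in \(\mathcal D'(B')\). By Weyl's lemma, \(h\) is then a smooth harmonic function in the interior of \(B'\), as claimed.

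I expect the only delicate point to be justifying the de Rham step in the present setting. The ball \(B'\) sits inside a chart of the torus, so the statement must be transported to a Euclidean ball. This is legitimate because the chart is an isometry onto its image, so test fields, divergence and gradients correspond exactly. Everything else is routine Hilbert-space bookkeeping.
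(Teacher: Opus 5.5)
Your proposal is correct and follows the paper's proof essentially step for step. The paper also gets the inclusion from the definition and derives the projection identities from the nestedness of closed subspaces; where you obtain \(Q_{B'}Q_B=Q_{B'}\) by taking adjoints, the paper argues directly from \(v-Q_Bv\perp\Hh(B')\). It then shows self-adjointness, idempotence and the orthogonal splitting, and finishes with the local de Rham theorem and interior elliptic regularity (your Weyl's lemma), so the Ne\v{c}as refinement \(h\in L^2(B')\) is harmless but not needed.
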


\begin{proof}
The inclusion follows directly from the definitions.  If \(v\in L^2\), then
\(v-Q_Bv\perp\Hh(B)\), hence also \(v-Q_Bv\perp\Hh(B')\); this gives
\(Q_{B'}Q_Bv=Q_{B'}v\).  The identity \(Q_BQ_{B'}=Q_{B'}\) follows from
\(\Hh(B')\subset\Hh(B)\).  Thus \(D_{B,B'}\) is self-adjoint and
idempotent, with the stated range, and \eqref{eq:hodge-pythagoras} follows
from the orthogonal decomposition
\(Q_Bv=D_{B,B'}v+Q_{B'}v\).

If \(q\perp\Hh(B')\), then its restriction to \(B'\) annihilates every
compactly supported smooth solenoidal test field.  The local de Rham theorem
gives \(q=\nabla h\) in \(B'\).  Since \(q\) is divergence-free,
\(\Delta h=0\) distributionally; interior elliptic regularity makes \(h\)
smooth.
\end{proof}

\begin{lemma}[Interior local-Hodge estimate]
\label{lem:hodge-interior}
Let \(B_r\Subset B_R\Subset\Omega\) be concentric balls contained in a
Euclidean chart.  For every \(v\in L^2_\sigma(\Omega)\),
\begin{equation}
 \norm{Q_{B_R}v}_2
 \ge \norm{v}_{L^2(B_r)}
 -C\left(\frac rR\right)^{3/2}\norm{v}_{L^2(B_R)}.
 \label{eq:hodge-interior}
\end{equation}
The constant is uniform for balls below a fixed fraction of the injectivity
radius.
\end{lemma}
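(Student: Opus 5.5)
The plan is to write \(v-Q_{B_R}v=\nabla\pi\) and estimate \(\nabla\pi\) on \(B_r\). In a Euclidean chart, \(L^2(B_R)\) restricted to vector fields splits into the closure of \(C^\infty_{c,\sigma}(B_R)\) and gradients of \(H^1_{\mathrm{loc}}\) (indeed \(L^2\)) functions on \(B_R\), by the local de Rham theorem used in Lemma~\ref{lem:nested-hodge}. Since \(\Hh(B_R)\) consists of fields supported in \(B_R\), we have \(Q_{B_R}v=Q_{B_R}(\one_{B_R}v)\). Moreover \(\one_{B_R}v-Q_{B_R}v\perp\Hh(B_R)\), so on \(B_R\) we get \(\one_{B_R}v-Q_{B_R}v=\nabla\pi\) with \(\nabla\pi\in L^2(B_R)\) and \(\norm{\nabla\pi}_{L^2(B_R)}\le\norm{v}_{L^2(B_R)}\). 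Because \(v\) and \(Q_{B_R}v\) are divergence-free in \(B_R\), \(\Delta\pi=0\) in \(B_R\), so \(\nabla\pi\) is a harmonic vector field there.

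The key estimate is then the mean value property. For a harmonic function \(w\) on \(B_R\) and \(x\in B_{R/2}\), we have \(\abs{w(x)}\le\abs{B_{R/2}}^{-1/2}\norm{w}_{L^2(B_R)}\), by Cauchy--Schwarz on the ball \(B_{R/2}(x)\subset B_R\). Applying this componentwise to \(w=\nabla\pi\) and integrating over \(B_r\) with \(r\le R/2\) gives
\[
\norm{\nabla\pi}_{L^2(B_r)}\le \abs{B_r}^{1/2}\sup_{B_r}\abs{\nabla\pi}\le C\left(\frac rR\right)^{3/2}\norm{v}_{L^2(B_R)}.
\]
If \(r>R/2\), the claimed inequality is trivial with \(C\ge 2^{3/2}\), since \(\norm{Q_{B_R}v}_2\ge0\) and \(\norm{v}_{L^2(B_r)}\le\norm{v}_{L^2(B_R)}\).

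To conclude, on \(B_r\) we have \(Q_{B_R}v=v-\nabla\pi\), so
\[
\norm{Q_{B_R}v}_2\ge\norm{Q_{B_R}v}_{L^2(B_r)}\ge\norm{v}_{L^2(B_r)}-\norm{\nabla\pi}_{L^2(B_r)},
\]
which gives \eqref{eq:hodge-interior}. Uniformity of \(C\) follows because, for balls below a fixed fraction of the injectivity radius, the exponential chart on the flat torus is an isometry, so the Euclidean mean value constant applies directly.

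The main obstacle is justifying the identification of the orthogonal complement of \(\Hh(B_R)\) inside \(L^2(B_R)\) with gradients. This requires the de Rham lemma: a distribution annihilating \(C^\infty_{c,\sigma}(B_R)\) is a gradient. One also needs \(\pi\) to exist with \(\nabla\pi\in L^2\) on the ball, which holds because balls are Lipschitz domains and therefore satisfy the Nečas inequality. Only interior harmonicity is used afterwards, so the boundary behaviour of \(\pi\) never enters the argument.
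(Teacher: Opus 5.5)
Your proposal is correct and follows the paper's proof. You write \(v-Q_{B_R}v=\nabla\pi\) on \(B_R\) via de Rham, bound \(\norm{\nabla\pi}_{L^2(B_R)}\le\norm{v}_{L^2(B_R)}\) by projection orthogonality, apply the scale-invariant interior estimate for the harmonic gradient, and finish with the triangle inequality on \(B_r\). The only additions are that you prove the interior estimate the paper merely cites, via the mean value property with the explicit constant \(C=2^{3/2}\), which also covers the trivial case \(r>R/2\). The Ne\v{c}as inequality you invoke is not needed, since \(\nabla\pi\in L^2(B_R)\) is automatic from the identity \(\nabla\pi=\one_{B_R}v-Q_{B_R}v\) on \(B_R\).
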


\begin{proof}
Set \(w=Q_{B_R}v\).  The restriction of \(v-w\) to \(B_R\) annihilates
all compactly supported smooth solenoidal test fields.  Hence
\(v-w=\nabla h\) in \(B_R\) for a harmonic distribution \(h\).  Because
\(w\in\Hh(B_R)\), \(w=0\) almost everywhere outside \(B_R\), and the
projection orthogonality gives
\begin{equation}
 \norm{\nabla h}_{L^2(B_R)}^2
 =\norm{v}_{L^2(B_R)}^2-\norm{w}_2^2
 \leq\norm{v}_{L^2(B_R)}^2.
 \label{eq:hodge-error}
\end{equation}
The scale-invariant interior estimate for a harmonic gradient yields
\[
 \norm{\nabla h}_{L^2(B_r)}
 \leq C(r/R)^{3/2}\norm{\nabla h}_{L^2(B_R)}.
\]
Since \(v=w+\nabla h\) in \(B_R\), the triangle inequality and
\eqref{eq:hodge-error} prove \eqref{eq:hodge-interior}.
\end{proof}

\begin{lemma}[Persistence under frozen thinning]
\label{lem:frozen-thinning}
Suppose \((B_n,q_n)\) is a nested chain with
\[
 q_n\in\Hh(B_n)\cap\Hh(B_{n+1})^\perp,
 \qquad q_n=\nabla h_n\ \text{in }B_{n+1},
\]
and suppose the mass, old-time smallness, and passive-avoidance estimates at
the original indices have already been fixed.  Let \((n_j)\) be strictly
increasing with \(n_{j+1}\geq n_j+2\), and retain the existing objects
\[
 \widetilde B_j=B_{n_j},\qquad \widetilde q_j=q_{n_j},
 \qquad \widetilde\tau_j=\tau_{n_j}.
\]
Then
\begin{equation}
 \widetilde q_j\in\Hh(\widetilde B_j)
 \cap\Hh(\widetilde B_{j+1})^\perp,
 \qquad
 \widetilde q_j\ \text{is a harmonic gradient in }\widetilde B_{j+1}.
 \label{eq:thinned-hodge-inheritance}
\end{equation}
The retained packets remain orthonormal, and all estimates attached to an
individual original index remain unchanged.  If the original passive
estimate holds whenever \(i\leq j-2\), it holds for every ordered pair of
retained indices.  The same conclusions persist under every further
subsequence.  No retained packet is redefined or reprojected.
\end{lemma}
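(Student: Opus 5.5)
The proof only uses the nested local-Hodge structure in Lemma~\ref{lem:nested-hodge}, applied along the retained chain, together with the fact that orthonormality and per-index estimates are properties of individual vectors or pairs. Since \(n_{j+1}\geq n_j+2>n_j+1\), the chain is nested, \(B_{n_j+1}\supseteq B_{n_{j+1}}\). Hence Lemma~\ref{lem:nested-hodge} gives \(\Hh(B_{n_{j+1}})\subset\Hh(B_{n_j+1})\), and taking orthogonal complements gives \(\Hh(B_{n_j+1})^\perp\subset\Hh(B_{n_{j+1}})^\perp\). From the hypothesis \(q_{n_j}\in\Hh(B_{n_j})\cap\Hh(B_{n_j+1})^\perp\) we then obtain \(\widetilde q_j\in\Hh(\widetilde B_j)\cap\Hh(\widetilde B_{j+1})^\perp\). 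Likewise, \(q_{n_j}=\nabla h_{n_j}\) with \(h_{n_j}\) harmonic in \(B_{n_j+1}\), and restricting to the smaller ball \(\widetilde B_{j+1}=B_{n_{j+1}}\subset B_{n_j+1}\) keeps \(h_{n_j}\) harmonic. Alternatively, the last clause of Lemma~\ref{lem:nested-hodge} applied directly to \(\widetilde q_j\) gives the same conclusion. This proves \eqref{eq:thinned-hodge-inheritance}.

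Orthonormality is inherited because \(\{\widetilde q_j\}\) is a subfamily of the orthonormal family \(\{q_n\}\). For completeness, orthonormality of the original family itself follows from the nesting: for \(n<n'\) we have \(q_{n'}\in\Hh(B_{n'})\subset\Hh(B_{n+1})\), while \(q_n\perp\Hh(B_{n+1})\). The mass, old-time smallness and other single-index estimates involve only the objects \((\tau_n,B_n,q_n)\), and these objects are retained verbatim, never redefined or reprojected. So those estimates hold at index \(j\) with the same values they had at the original index \(n_j\).

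For the passive estimate, take retained indices \(i<j\). Then \(n_j\geq n_{j-1}+2\geq n_i+2\) by monotonicity of \((n_j)\), so \(n_i\leq n_j-2\). The hypothesis therefore applies to the original pair \((n_i,n_j)\). The bound \(\eta_{n_j}\) it supplies is attached to the later original index. Reindexing as \(\widetilde\eta_j=\eta_{n_j}\) keeps it decreasing to zero.

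Each of these arguments uses only nesting and a gap of at least two between consecutive retained indices, and both properties survive passage to any further subsequence. So the conclusions persist under every further subsequence.

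The only point needing care is the gap-two bookkeeping in the passive estimate. A gap of one would already suffice for the Hodge inheritance, but the passive estimate is hypothesized only for \(i\leq j-2\), which is exactly why the thinning requires \(n_{j+1}\geq n_j+2\).
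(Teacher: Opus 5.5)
Your proof is correct and follows essentially the same route as the paper's. Nesting together with $n_{j+1}\ge n_j+2$ gives $\Hh(B_{n_{j+1}})\subset\Hh(B_{n_j+1})$, which transfers the orthogonality and the harmonic-gradient identity to the next retained ball; single-index estimates are untouched; and $n_i\le n_j-2$ for retained $i<j$ places every retained pair in the original passive range. Your extra remarks, namely the reindexing $\widetilde\eta_j=\eta_{n_j}$ and the observation that a gap of one already suffices for the Hodge inheritance, are accurate refinements of details the paper leaves implicit.
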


\begin{proof}
Nestedness and \(n_{j+1}\geq n_j+2\) give
\(\Hh(B_{n_{j+1}})\subset\Hh(B_{n_j+1})\).  Hence the original
orthogonality to \(\Hh(B_{n_j+1})\) implies orthogonality to the next
retained Hodge space.  Also
\(B_{n_{j+1}}\Subset B_{n_j+1}\), so the original harmonic-gradient
identity restricts to the next retained ball.  Orthonormality and all
single-index estimates are unchanged.  For retained indices \(i<j\), one
has \(n_i\leq n_j-2\), which is precisely the original passive-avoidance
range.  Repeating the same inclusion argument proves persistence under any
further subsequence.
\end{proof}

\begin{proposition}[Catalogued level-crossing selection]
\label{prop:catalog-selection}
Assume \eqref{eq:atom-intro}.  Let
\(\gamma_j\uparrow T_*\), \(\delta_j^{\rm given}\downarrow0\), and
\(\eta_j\downarrow0\) be prescribed.  Then one can select from the fixed
catalogue a sequence of entries, rational levels \(\theta_j\uparrow m\), and
tolerances chosen recursively after the levels so that
\begin{equation}
 0<\widehat\delta_j\le
 \min\{\delta_j^{\rm given},2^{-j},\tfrac14\sqrt{\theta_j},
       \tfrac12\widehat\delta_{j-1}\},
 \label{eq:delta-hat}
\end{equation}
where the last entry is omitted for \(j=1\), such that the following hold.

There are balls \(B_j=B_{R_j}(z_j)\), inner radii
\(r_j=\beta_jR_j\), and cutoffs
\(\chi_j\) such that
\begin{align}
 &a\in B_{r_j}(z_j),
 \quad \operatorname{supp}\chi_j\subset B_{2r_j}(z_j)\Subset B_j,
 \quad B_{j+1}\Subset B_j,
 \label{eq:nested-balls}\\
 &R_j<2^{-j},
 \quad R_j\downarrow0,
 \quad \bigcap_j\overline{B_j}=\{a\},
 \label{eq:balls-to-a}\\
 &\tau_j<\sigma_j<\tau_{j+1},
 \quad \tau_j>\gamma_j,
 \label{eq:ordered-times}\\
 &\int_\Omega\chi_j\abs{u(\tau_j)}^2\dd x=\theta_j,
 \label{eq:selected-crossing}\\
 &\sup_{t_b\le t\le\sigma_j}
 \norm{u(t)}_{L^2(B_{j+1})}\le\widehat\delta_j.
 \label{eq:selected-old-small}
\end{align}

Let \(Q_j=Q_{B_j}\), \(D_j=Q_j-Q_{j+1}\), and retain the old packet
\begin{equation}
 q_j=\frac{D_ju(\tau_j)}{\norm{D_ju(\tau_j)}_2}.
 \label{eq:q-definition}
\end{equation}
Then \(q_j\) is well-defined, the packets are orthonormal,
\begin{equation}
 q_j\in\Hh(B_j)\cap\Hh(B_{j+1})^\perp,
 \qquad
 \ip{u(\tau_j)}{q_j}^2
 \ge\theta_j-2\widehat\delta_j\sqrt{\theta_j},
 \label{eq:hodge-full-mass}
\end{equation}
and \(q_j\) is a harmonic gradient in \(B_{j+1}\).  The same construction
can be made to satisfy
\begin{equation}
 \abs{\ip{S(\tau_j,\tau_i)q_i}{q_j}}\le\eta_j,
 \qquad i\le j-2.
 \label{eq:pre-avoidance}
\end{equation}
After the explicit gap-two thinning in the proof, all conclusions persist under
arbitrary further subsequences in the sense of
Lemma~\ref{lem:frozen-thinning}.  In particular, no packet is redefined or
reprojected after thinning.
\end{proposition}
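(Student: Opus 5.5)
The construction is recursive: at each stage $j$ we pick a level, find a catalogue item realizing it, and freeze the next ball before defining the packet $q_j$.

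\textbf{Choosing the level and the catalogue item.} Since $\mu_*(\{a\})=m>0$, Lemma~\ref{lem:unique-endpoint-measure} gives, for any cutoff $\chi=\chi_{z,R,\beta}$ with $a$ in the interior of $\{\chi=1\}$,
\[
\liminf_{t\uparrow T_*}M_\chi(t)\ge m .
\]
At stage $j$ we choose a rational level $\theta_j\in(m-2^{-j},m)$ with $\theta_j>\theta_{j-1}$. Then $\widehat\delta_j$ is chosen by \eqref{eq:delta-hat}. Next we choose catalogue centers, radii and a buffer ratio with
\[
a\in B_{r_j}(z_j),\qquad R_j<2^{-j},\qquad B_j\Subset B_{j-1}\cap B_{r_{j-1}}(z_{j-1}) .
\]
We also take $R_j$ smaller than the distance of $a$ to $\partial B_{j-1}$; then $B_{j+1}\Subset B_j$ and $\bigcap_j\overline B_j=\{a\}$ follow.

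For the guard time $h_j>\max(\gamma_j,\sigma_{j-1})$ we need $M_{\chi_j}(h_j)<\theta_j$. This follows from Lemma~\ref{lem:uniform-ac}: the previous old-time smallness gives
\[
\sup_{t\le\sigma_{j-1}}\|u(t)\|_{L^2(B_j)}\le\widehat\delta_{j-1},
\]
and we arrange $\widehat\delta_{j-1}^2<\theta_j$. If $h_j$ must exceed $\sigma_{j-1}$, we instead apply Lemma~\ref{lem:uniform-ac} on $[t_b,h_j]$ and shrink the radius so that $\sup_{t\le h_j}\|u\|_{L^2(B_{2r_j})}^2<\theta_j$. Both $h_j$ and the radius lie in the dense countable skeleton, so this is possible. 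A high time $H_j$ with $M_{\chi_j}(H_j)>\theta_j$ exists because the $\liminf$ is at least $m>\theta_j$. This makes the item eligible, so $\tau_j$ is its first crossing, which gives \eqref{eq:selected-crossing}.

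We then pick $\sigma_j\in(\tau_j,H_j)$ from the dense set. \textbf{The key ordering:} only now, with $\sigma_j$ frozen, we apply Lemma~\ref{lem:uniform-ac} on $[t_b,\sigma_j]$ to choose $B_{j+1}$ (the next ball) so small that \eqref{eq:selected-old-small} holds. That choice is made before $q_j$ is defined, so no circularity occurs.

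\textbf{Mass bound and packet properties.} Set $v=u(\tau_j)$. By \eqref{eq:hodge-pythagoras},
\[
\|D_jv\|_2^2=\|Q_jv\|_2^2-\|Q_{j+1}v\|_2^2\ge\|Q_jv\|_2^2-\widehat\delta_j^2 ,
\]
because $Q_{j+1}v$ is bounded by $\|v\|_{L^2(B_{j+1})}$ (the projection vanishes off $B_{j+1}$ and $v-Q_{j+1}v$ is orthogonal). Lemma~\ref{lem:hodge-interior} applied with $r=2r_j$ gives
\[
\|Q_jv\|_2\ge\|v\|_{L^2(B_{2r_j})}-C(2\beta_j)^{3/2}E_* .
\]
We have $\|v\|_{L^2(B_{2r_j})}^2\ge\theta_j$, and we choose $\beta_j$ so that $C(2\beta_j)^{3/2}E_*\le\widehat\delta_j^2/(4\sqrt{\theta_j})$. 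Since $\langle v,q_j\rangle=\|D_jv\|_2$, this yields \eqref{eq:hodge-full-mass}, with $\|D_jv\|>0$ so $q_j$ is well-defined.

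The geometry claims are direct consequences of Lemma~\ref{lem:nested-hodge}: $q_j$ lies in $\Hh(B_j)\cap\Hh(B_{j+1})^\perp$ and is a harmonic gradient in $B_{j+1}$. For $i<j$ we have $q_j\in\Hh(B_j)\subset\Hh(B_{i+1})$, so $q_j\perp q_i$ and the packets are orthonormal.

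\textbf{Passive avoidance.} This is the main obstacle. The idea is that $S(\tau_j,\tau_i)q_i$, for the finitely many $i\le j-2$, is a fixed $L^2$ function once $\tau_j$ is known. Its mass on $B_j$ is not automatically small, so we pair it against $q_j$ and use the structure of $q_j$:
\[
q_j=D_jv,\qquad \langle f,q_j\rangle=\langle D_j\Pp f,v\rangle/\|D_jv\|_2 .
\]
Hence
\[
|\langle S(\tau_j,\tau_i)q_i,q_j\rangle|\le\|Q_j\Pp S(\tau_j,\tau_i)q_i\|_2\,\|v\|_2/\|D_jv\|_2 .
\]
We aim to make $\|Q_{B_j}g\|_2$ small for the fixed $g=\Pp S(\tau_j,\tau_i)q_i$ by choosing $B_j$ small. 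However, $\tau_j$ depends on $B_j$, so this is circular. I would try to break it with the gap-two structure: bound the pairing uniformly over $t\in[\tau_{i+1},\sigma_{j-1}]$, using compactness of the map
\[
t\mapsto S(t,\tau_i)q_i\quad\text{on compact intervals},
\]
together with Lemma~\ref{lem:uniform-ac}-type absolute continuity, to shrink $B_j$ before $\tau_j$ is known.

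Where this fails, the fallback is to select a sparse subsequence $n_j$ with $n_{j+1}\ge n_j+2$ along which the fixed functions have negligible local-Hodge mass, and then invoke Lemma~\ref{lem:frozen-thinning} to transfer everything without reprojection.

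Finally, persistence under further subsequences is exactly Lemma~\ref{lem:frozen-thinning}.
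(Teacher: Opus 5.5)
Your choice of levels and buffers, the mass bound, the Hodge geometry and the orthonormality argument are fine. Your buffer condition $C(2\beta_j)^{3/2}E_*\le\widehat\delta_j^2/(4\sqrt{\theta_j})$ even gives $\|D_ju(\tau_j)\|_2^2\ge\theta_j-\tfrac32\widehat\delta_j^2$. But the passive-avoidance estimate \eqref{eq:pre-avoidance}, which is part of the statement, is not proved, and neither of your mechanisms can supply it.

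The pairing time $\tau_j$ is the first crossing for $\chi_j$, so it exists only after $B_j$ is frozen. It lies in $(h_j,H_j)$, and $H_j$ must move toward $T_*$ as $B_j$ shrinks, because at any fixed preterminal time the smooth field has vanishing mass in small balls. Hence $B_j$ has to be small against $S(t,\tau_i)q_i$ uniformly for all $t\in[h_j,T_*)$. That range is noncompact, adjacent to the endpoint, and there the drift $u$ is uncontrolled. Compactness of $t\mapsto S(t,\tau_i)q_i$ on compact intervals gives nothing there. Your interval $[\tau_{i+1},\sigma_{j-1}]$ does not even contain $\tau_j>h_j>\sigma_{j-1}$.

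The thinning fallback fails for the same reason. A subsequence only restricts to pairs where smallness already holds. A diagonal selection would need $\langle S(\tau_j,\tau_i)q_i,q_j\rangle\to0$ for fixed $i$. This does not follow from $q_j\weakto0$, since the other factor moves with $j$; it needs exactly the missing uniform-in-time control.

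The missing ingredient is a drift-independent bound, which the paper takes from Lemma~\ref{lem:nash-smoothing}. For the finitely many already-defined packets $q_i$, $i\le j-2$, the positive delay $h_j-\tau_i>0$ gives
$\sup_{t\ge h_j}\|S(t,\tau_i)q_i\|_\infty\le C_\Omega[1+(\nu(h_j-\tau_i))^{-3/4}]$, with no dependence on $u$. Shrink $B_j$, before $\tau_j$ or $q_j$ exists, so that $|B_j|^{1/2}$ times this bound is at most $\eta_j$. Since $q_j$ vanishes off $B_j$ and has unit norm, Cauchy--Schwarz then gives
$|\langle S(\tau_j,\tau_i)q_i,q_j\rangle|\le\|S(\tau_j,\tau_i)q_i\|_{L^2(B_j)}\le\eta_j$. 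Your detour through $Q_j\Pp$ and $\|v\|_2/\|D_jv\|_2$ is unnecessary. The range is $i\le j-2$ because $q_{j-1}$ is still pending when $B_j$ is chosen. The gap-two thinning $n_j=2j$ and Lemma~\ref{lem:frozen-thinning} then cover every retained pair.

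Separately, fix your ordering so that $h_j$ is chosen right after $\sigma_{j-1}$ and before $B_j$. Then one smallness choice on $[t_b,h_j]$, with bound $\min\{\widehat\delta_{j-1},\tfrac12\sqrt{\theta_j}\}$, yields both the guard condition and \eqref{eq:selected-old-small} at index $j-1$. It also ensures $B_j$ is never shrunk after $q_{j-1}$ is defined; shrinking it later would change $D_{j-1}$ and hence $q_{j-1}$.
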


\begin{proof}
We give the selection order because it carries the main quantifier content.
Choose rational \(\theta_j\uparrow m\) with \(\theta_j<m\), choose
\(\widehat\delta_j\) recursively according to \eqref{eq:delta-hat}, and then
choose rational buffers
\(\beta_j\downarrow0\) so small that
\[
 C(2\beta_j)^{3/2}E_*\leq\widehat\delta_j.
\]
Thus the error term in Lemma~\ref{lem:hodge-interior}, with
\(r=2\beta_jR\), is at most \(\widehat\delta_j\) for every \(R\).

Choose a chart ball \(B_0\) containing \(a\), and set
\(\sigma_0=t_b\) and \(\widehat\delta_0=1\).  Suppose levels through
\(j-1\) and packets through \(j-2\) have been fixed.
The packet \(q_{j-1}\) remains pending until \(B_j\) has been fixed.  Choose a
rational guard time
\begin{equation}
 h_j>\max\{\sigma_{j-1},\gamma_j\}.
 \label{eq:guard-time}
\end{equation}
By Lemma~\ref{lem:uniform-ac}, density of the center-radius skeleton, and the
fact that \(a\) lies in the interior of the previous ball, choose
\(B_j=B_{R_j}(z_j)\Subset B_{j-1}\) with \(R_j<2^{-j}\),
\(a\in B_{\beta_jR_j}(z_j)\), and
\begin{equation}
 \sup_{t_b\le t\le h_j}\norm{u(t)}_{L^2(B_j)}
 \le\min\{\widehat\delta_{j-1},\tfrac12\sqrt{\theta_j}\}.
 \label{eq:guard-smallness}
\end{equation}
At this stage only finitely many old passive trajectories are present.
The drift-independent estimate proved in
Lemma~\ref{lem:nash-smoothing} below makes their \(L^\infty\) norms uniformly
bounded for \(t\ge h_j\).  Shrinking \(B_j\) further, before defining the new
packet, therefore ensures
\begin{equation}
 \abs{B_j}^{1/2}
 \max_{i\le j-2}\sup_{t\ge h_j}
 \norm{S(t,\tau_i)q_i}_\infty\le\eta_j.
 \label{eq:avoidance-selection}
\end{equation}
For \(j\geq2\), once this final choice of \(B_j\) has been made, define the
previously pending packet \(q_{j-1}\) by \eqref{eq:q-definition}, with
\(D_{j-1}=Q_{j-1}-Q_j\).  The packet \(q_j\) remains pending until
\(B_{j+1}\) is fixed.  Thus each packet is defined exactly once, after both
Hodge spaces entering its difference projection have been fixed.

Since \(\chi_j(a)=1\), Lemma~\ref{lem:unique-endpoint-measure} gives the
full-time limit
\begin{equation}
 M_{\chi_j}(t)
 \longrightarrow\int\chi_j\dd\mu_*\ge m
 \qquad\text{as }t\uparrow T_*.
 \label{eq:atom-witness}
\end{equation}
Because \(\theta_j<m\), choose a high time \(H_j>h_j\) from the fixed dense
set such that \(M_{\chi_j}(H_j)>\theta_j\).  The guard smallness in
\eqref{eq:guard-smallness} implies
\(M_{\chi_j}(h_j)<\theta_j\), so this is an eligible catalogue item.
Definition~\eqref{eq:first-crossing} gives \(\tau_j\in(h_j,H_j)\), and we
choose \(\sigma_j\in(\tau_j,H_j)\) from the fixed dense set.  The next guard
time lies after \(\sigma_j\), which proves \eqref{eq:ordered-times}.  Equation
\eqref{eq:guard-smallness} at the next stage gives
\eqref{eq:selected-old-small}.

Apply Lemma~\ref{lem:hodge-interior} to the inner ball containing
\(\operatorname{supp}\chi_j\) and the outer ball \(B_j\).  The buffer choice
and \eqref{eq:selected-crossing} yield
\begin{equation}
 \norm{Q_ju(\tau_j)}_2\ge\sqrt{\theta_j}-\widehat\delta_j.
 \label{eq:Qj-lower}
\end{equation}
On the other hand, \eqref{eq:selected-old-small} and the support of
\(\Hh(B_{j+1})\) give
\begin{equation}
 \norm{Q_{j+1}u(\tau_j)}_2\le\widehat\delta_j.
 \label{eq:Qj1-upper}
\end{equation}
Lemma~\ref{lem:nested-hodge} shows that the nested projections commute and
that \(D_j=Q_j-Q_{j+1}\) is the orthogonal projection onto
\(\Hh(B_j)\cap\Hh(B_{j+1})^\perp\).  Hence
\begin{align}
 \norm{D_ju(\tau_j)}_2^2
 &=\norm{Q_ju(\tau_j)}_2^2-
 \norm{Q_{j+1}u(\tau_j)}_2^2\notag\\
 &\ge\theta_j-2\widehat\delta_j\sqrt{\theta_j}>0,
 \label{eq:Dj-lower}
\end{align}
Since \(D_j\) is an orthogonal projection,
\begin{equation}
 \ip{u(\tau_j)}{q_j}
 =\frac{\ip{u(\tau_j)}{D_ju(\tau_j)}}
        {\norm{D_ju(\tau_j)}_2}
 =\norm{D_ju(\tau_j)}_2>0.
 \label{eq:packet-pairing-positive}
\end{equation}
Thus \eqref{eq:Dj-lower} proves the pairing estimate in
\eqref{eq:hodge-full-mass}.  Difference spaces at distinct levels are
orthogonal.  Orthogonality to compactly supported solenoidal tests in
\(B_{j+1}\), followed by the local de Rham theorem and divergence freedom,
shows that \(q_j\) is a harmonic gradient there.

Finally, \eqref{eq:avoidance-selection}, the fact that \(q_j=0\) almost
everywhere outside \(B_j\), and \(\norm{q_j}_2=1\) imply
\eqref{eq:pre-avoidance}.  Take the explicit gap-two sequence \(n_j=2j\)
and retain the already defined tuples
\((\tau_{n_j},\sigma_{n_j},B_{n_j},\chi_{n_j},q_{n_j})\).  Lemma
\ref{lem:frozen-thinning} gives every asserted inheritance property,
including persistence under the later diagonal extraction.  No retained
packet is reprojected.
\end{proof}

\section{Two uniform linear estimates}
\label{sec:linear}

The common adjoint construction requires localization for a reverse Oseen
pulse and drift-independent smoothing for passive trajectories.  We record
both estimates with the dependencies used later.

\begin{lemma}[Drift-independent periodic Nash smoothing]
\label{lem:nash-smoothing}
Let \(b\) be smooth and divergence-free on \([s,t]\times\Omega\).  The
scalar propagator of
\[
 \partial_tf+b\cdot\nabla f=\nu\Delta f
\]
satisfies
\begin{align}
 \norm{S_b(t,s)}_{L^1\to L^2}
 &\le C_\Omega\bigl[1+(\nu(t-s))^{-3/4}\bigr],
 \label{eq:L1-L2}\\
 \norm{S_b(t,s)}_{L^2\to L^\infty}
 &\le C_\Omega\bigl[1+(\nu(t-s))^{-3/4}\bigr].
 \label{eq:L2-Linf}
\end{align}
The constant is independent of the size and derivatives of \(b\).
\end{lemma}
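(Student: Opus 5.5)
The plan is the classical Nash argument for divergence-free drifts, run on the torus. The one structural input is that \(\nabla\cdot b=0\) makes the drift term vanish in every \(L^p\) energy identity. Consequently \(S_b(t,s)\) contracts on every \(L^p\), \(1\le p\le\infty\), and preserves the spatial mean. The adjoint propagator is the propagator of the reversed-drift equation \(-\partial_\rho g-b\cdot\nabla g=\nu\Delta g\), which has the same structure. By duality, \eqref{eq:L2-Linf} therefore follows from \eqref{eq:L1-L2} applied to the adjoint family, and the adjoint estimate holds with the same constant because nothing below depends on \(b\). So it suffices to prove \eqref{eq:L1-L2} for smooth data \(f_s\) and extend by density.

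First I will split \(f=\bar f+g\), where \(\bar f=|\Omega|^{-1}\int_\Omega f_s\) is the conserved mean. The mean contributes \(|\Omega|^{-1/2}\norm{f_s}_1\) to the \(L^2\) norm, and this accounts for the constant \(1\) in the bound. For the mean-free part \(g(t)\), the \(L^2\) identity \(\frac{\dd}{\dd t}\norm{g}_2^2=-2\nu\norm{\nabla g}_2^2\) holds because the drift term \(\int(b\cdot\nabla g)g=0\). The \(L^1\) norm is nonincreasing, since the mean-free part also solves the same equation, so \(\norm{g(t)}_1\le\norm{g(s)}_1\le 2\norm{f_s}_1\). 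Next I will use the periodic Nash inequality for mean-free functions,
\[
 \norm{g}_2^{2+4/3}\le C_\Omega\norm{\nabla g}_2^2\norm{g}_1^{4/3}.
\]
This follows from Fourier splitting at a frequency \(\lambda\): the low modes contribute at most \(\lambda^3\norm{g}_1^2\), the high modes at most \(\lambda^{-2}\norm{\nabla g}_2^2\), and then one optimizes in \(\lambda\).

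Combining these gives the differential inequality \(y'\le -c\,\nu\,y^{5/3}\norm{f_s}_1^{-4/3}\) for \(y=\norm{g}_2^2\). Integrating yields \(y(t)\le C(\nu(t-s))^{-3/2}\norm{f_s}_1^2\), that is, \(\norm{g(t)}_2\le C(\nu(t-s))^{-3/4}\norm{f_s}_1\). Adding the mean contribution proves \eqref{eq:L1-L2} with \(C_\Omega\) depending only on the torus.

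The only delicate point is to confirm that no step uses the size of \(b\). The drift enters only through the cancellations \(\int (b\cdot\nabla g)\,\phi(g)=0\), used for the \(L^2\) identity and for the \(L^1\) monotonicity (the latter via a smooth convex approximation of \(|g|\)), and both hold because \(\nabla\cdot b=0\) and \(b\) is smooth, which justifies the computations. Finally, applying the duality step through the adjoint family gives \eqref{eq:L2-Linf}.
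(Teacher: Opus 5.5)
Your proposal is correct and follows the same Nash argument as the paper. The drift cancels in the $L^1$ and $L^2$ balances because $\nabla\cdot b=0$; the periodic Nash inequality then gives a differential inequality for $\|f\|_2^2$, and duality through the reversed-drift adjoint yields the $L^2\to L^\infty$ bound. The only difference is cosmetic: you split off the conserved mean and apply the homogeneous Nash inequality to the mean-free part, whereas the paper uses the inhomogeneous form $\|g\|_2^{10/3}\le C_\Omega\|g\|_1^{4/3}(\|\nabla g\|_2^2+\|g\|_2^2)$ directly. Your splitting makes the origin of the constant term $1$ in the bound explicit.
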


\begin{proof}
Kato's inequality and divergence freedom give the \(L^1\)-contraction
\(\norm{f(t)}_1\le\norm{f(s)}_1\).  Testing by \(f\) gives
\begin{equation}
 \frac12\frac{\dd}{\dd t}\norm{f(t)}_2^2
 +\nu\norm{\nabla f(t)}_2^2=0.
 \label{eq:scalar-energy}
\end{equation}
The periodic Nash inequality
\begin{equation}
 \norm{g}_2^{10/3}
 \le C_\Omega\norm{g}_1^{4/3}
 \bigl(\norm{\nabla g}_2^2+\norm{g}_2^2\bigr)
 \label{eq:periodic-nash}
\end{equation}
and the standard differential-inequality argument imply
\eqref{eq:L1-L2}; see Nash~\cite{Nash1958}.  Applying this estimate to the
adjoint scalar equation and using duality gives \eqref{eq:L2-Linf}.  The
transport term disappears from both \(L^1\) and \(L^2\) balances, so no drift
norm enters the constant.  Density extends the estimates from
\(L^1\cap L^2\) data to the stated spaces.
\end{proof}

\begin{lemma}[Uniform reverse-Oseen off-support escape]
\label{lem:oseen-escape}
Let \(w\) be the energy solution on \([0,L]\) of
\begin{equation}
 \partial_\rho w+(b\cdot\nabla)w+\nabla\pi=\nu\Delta w,
 \qquad \nabla\cdot w=0,
 \qquad w(0)=w_0\in L^2_\sigma(\Omega),
 \label{eq:reverse-oseen}
\end{equation}
where \(b\) is smooth and divergence-free and
\(\int_\Omega w_0\dd x=0\).  Put
\[
 W=\norm{w}_{L^\infty(0,L;L^2)},\qquad
 G=\norm{\nabla w}_{L^2((0,L)\times\Omega)},\qquad
 B=\norm{b}_{L^2(0,L;L^6)}.
\]
For every \(0\leq\zeta\leq1\) with
\(\zeta\in W^{2,\infty}(\Omega)\),
\begin{align}
 \int_\Omega\zeta\abs{w(L)}^2\dd x
 &\leq\int_\Omega\zeta\abs{w_0}^2\dd x
 +C_\Omega\norm{\nabla\zeta}_\infty
 L^{1/4}BW^{3/2}G^{1/2}\notag\\
 &\quad+\nu\norm{\Delta\zeta}_\infty LW^2.
 \label{eq:escape-general}
\end{align}
If \(B_{2r}(a)\) is contained in a Euclidean chart and
\(w_0\in\Hh(B_r(a))\), then
\begin{align}
 \norm{w(L)}_{L^2(\Omega\setminus B_{2r}(a))}^2
 &\leq C_\Omega r^{-1}(2\nu)^{-1/4}L^{1/4}B
       \norm{w_0}_2^2\notag\\
 &\quad+C_\Omega\nu r^{-2}L\norm{w_0}_2^2.
 \label{eq:escape-ball}
\end{align}
The constants do not depend on derivatives or pointwise norms of \(b\).
\end{lemma}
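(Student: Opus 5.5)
The plan is to prove \eqref{eq:escape-general} by a localized energy identity for \(w\), and then to specialize it with a cutoff adapted to the ball. The pressure, mean-zero normalization, and the \(L^6\) drift norm are handled by the same Riesz-transform and interpolation devices used in Lemma~\ref{lem:unique-endpoint-measure}.

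\emph{Step 1: localized identity.} Test \eqref{eq:reverse-oseen} against \(\zeta w\); this is first done at the Galerkin level and then passed to the limit, since \(w\) is only an energy solution. Writing \(F(\rho)=\frac12\int\zeta\abs{w}^2\), one obtains
\[
 F'=\frac12\int\abs{w}^2\,b\cdot\nabla\zeta+\int\pi\,w\cdot\nabla\zeta+\frac\nu2\int\abs{w}^2\Delta\zeta-\nu\int\zeta\abs{\nabla w}^2 .
\]
The transport term has been integrated by parts using \(\nabla\cdot b=0\). The last term is nonpositive and is discarded, because \(\zeta\ge0\). The viscous boundary term contributes at most \(\frac\nu2\norm{\Delta\zeta}_\infty W^2\). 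After integrating over \([0,L]\) and doubling, this gives the last term of \eqref{eq:escape-general}.

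\emph{Step 2: drift and pressure fluxes.} Hölder gives
\[
 \int\abs{w}^2\abs{b}\abs{\nabla\zeta}\le\norm{\nabla\zeta}_\infty\norm{b}_6\norm{w}_{12/5}^2 .
\]
I expect the pressure flux to be the main obstacle. Since \(\nabla\cdot w=0\), we have \(-\Delta\pi=\partial_i\partial_j(b_iw_j)\), so \(\pi=\mathcal R_i\mathcal R_j(b_iw_j)\) after mean normalization. The mean of \(\pi\) is immaterial because \(\int w\cdot\nabla\zeta=0\). Calderón--Zygmund bounds then give \(\norm{\pi}_{6/5}\le C\norm{b}_6\norm{w}_{3/2}\). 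Hence
\[
 \int\abs{\pi}\abs{w}\abs{\nabla\zeta}\le C\norm{\nabla\zeta}_\infty\norm{b}_6\norm{w}_{3/2}\norm{w}_6\lesssim\norm{\nabla\zeta}_\infty\norm{b}_6\norm{w}_{12/5}^2 .
\]
The \(L^{3/2}\)--\(L^6\) pairing is rebalanced through interpolation on the torus, so that the pressure term matches the drift term. The mean of \(w\) is conserved and equals zero, so Sobolev applies without lower-order terms:
\[
 \norm{w}_{12/5}^2\le C\norm{w}_2^{3/2}\norm{\nabla w}_2^{1/2} .
\]
Hölder in time over \([0,L]\), with exponents \(2,4,4\) for \(\norm{b}_6\), the constant \(W^{3/2}\), and \(\norm{\nabla w}_2^{1/2}\), produces the factor \(L^{1/4}BW^{3/2}G^{1/2}\). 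This proves \eqref{eq:escape-general}, and no derivative or pointwise norm of \(b\) enters at any point.

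\emph{Step 3: ball version.} Take \(\zeta=1-\chi\), where \(\chi\) equals \(1\) on \(B_r(a)\), is supported in \(B_{2r}(a)\), and satisfies \(\norm{\nabla\zeta}_\infty\lesssim r^{-1}\) and \(\norm{\Delta\zeta}_\infty\lesssim r^{-2}\) in the chart. Since \(w_0\in\Hh(B_r(a))\), we have \(\zeta w_0=0\), and the initial term vanishes. Also \(\int w_0=0\), consistent with the earlier remark that elements of \(\Hh(B)\) have zero mean. The energy identity \eqref{eq:oseen-forward-energy} for the Oseen flow, whose convection is skew, gives \(W\le\norm{w_0}_2\) and \(G\le(2\nu)^{-1/2}\norm{w_0}_2\). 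Therefore \(W^{3/2}G^{1/2}\le(2\nu)^{-1/4}\norm{w_0}_2^2\). Finally, \(\zeta=1\) off \(B_{2r}(a)\), which yields \eqref{eq:escape-ball}.
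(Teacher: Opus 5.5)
Your route is essentially the paper's. You use the localized energy identity tested against \(\zeta w\), the Riesz-transform pressure, mean-zero interpolation, H\"older in time with exponents \((2,4,4)\), and a cutoff vanishing on \(B_r(a)\). The transport flux, the viscous term, and the ball specialization are fine.

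The gap is in the pressure flux. You bound \(\norm{\pi}_{6/5}\le C\norm{b}_6\norm{w}_{3/2}\), pair it with \(\norm{w}_6\), and then assert \(\norm{w}_{3/2}\norm{w}_6\lesssim\norm{w}_{12/5}^2\). That inequality runs the wrong way. Since \(\frac{5}{12}=\frac12\cdot\frac23+\frac12\cdot\frac16\), H\"older interpolation gives \(\norm{w}_{12/5}^2\le\norm{w}_{3/2}\norm{w}_6\), and the reverse fails. For example, add to a fixed field a spike of height \(\lambda\) on a ball of radius \(\lambda^{-1}\). Then \(\norm{w}_6\sim\lambda^{1/2}\), while \(\norm{w}_{3/2}\) and \(\norm{w}_{12/5}\) stay bounded.

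What your chain honestly yields is \(\norm{w}_{3/2}\norm{w}_6\le C\norm{w}_2\norm{\nabla w}_2\). After H\"older in time this gives \(C\norm{\nabla\zeta}_\infty BWG\). That bound has no factor \(L^{1/4}\), and it is not dominated by \(L^{1/4}BW^{3/2}G^{1/2}\): the ratio \(G^{1/2}/(L^{1/4}W^{1/2})\) is unbounded for high-frequency data. So neither \eqref{eq:escape-general} nor \eqref{eq:escape-ball} is proved as stated.

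The repair is to choose the Calder\'on--Zygmund exponent so that the two copies of \(w\) are balanced from the start. Since \(\frac{7}{12}=\frac16+\frac{5}{12}\), you have \(\norm{\pi}_{12/7}\le C\norm{b}_6\norm{w}_{12/5}\). Hence \(\int\abs{\pi}\abs{w}\abs{\nabla\zeta}\le C\norm{\nabla\zeta}_\infty\norm{b}_6\norm{w}_{12/5}^2\), which is exactly your transport bound.

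Alternatively, as the paper does, use \(\norm{\pi}_{3/2}\le C\norm{b}_6\norm{w}_2\) and pair it with \(\norm{w}_3\). For mean-zero \(w\), \(\norm{w}_3\le C\norm{w}_2^{1/2}\norm{\nabla w}_2^{1/2}\), so the pressure flux is again \(\lesssim\norm{b}_6\norm{w}_2^{3/2}\norm{\nabla w}_2^{1/2}\). With either fix, the rest of your argument goes through unchanged.
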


\begin{proof}
The spatial mean of \(w\) is preserved.  Testing
\eqref{eq:reverse-oseen} by \(w\) gives
\begin{equation}
 \norm{w(\rho)}_2^2
 +2\nu\int_0^\rho\norm{\nabla w(s)}_2^2\dd s
 =\norm{w_0}_2^2.
 \label{eq:reverse-energy}
\end{equation}
With the pressure normalized to have zero mean, taking divergence in
\eqref{eq:reverse-oseen} gives
\begin{equation}
 -\Delta\pi=\partial_i\partial_j(b_jw_i),
 \qquad
 \pi=\mathcal R_i\mathcal R_j(b_jw_i),
 \qquad
 \norm{\pi}_{3/2}\leq C_\Omega\norm{b}_6\norm{w}_2.
 \label{eq:pressure-estimates}
\end{equation}
For smooth data, multiplication by \(2\zeta w\), integration over the torus,
and divergence freedom give the exact localized identity
\begin{align}
 \frac{\dd}{\dd\rho}\int_\Omega\zeta\abs{w}^2\dd x
 +2\nu\int_\Omega\zeta\abs{\nabla w}^2\dd x
 ={}&\int_\Omega\abs{w}^2b\cdot\nabla\zeta\dd x\notag\\
 &+2\int_\Omega\pi w\cdot\nabla\zeta\dd x
 +\nu\int_\Omega\abs{w}^2\Delta\zeta\dd x.
 \label{eq:reverse-local-energy}
\end{align}
At each time,
\begin{align}
 \left|\int_\Omega\abs{w}^2b\cdot\nabla\zeta\dd x\right|
 &\leq\norm{\nabla\zeta}_\infty\norm{b}_6
       \norm{w}_{12/5}^2,
 \label{eq:reverse-transport-flux}\\
 \left|\int_\Omega\pi w\cdot\nabla\zeta\dd x\right|
 &\leq C_\Omega\norm{\nabla\zeta}_\infty\norm{b}_6
       \norm{w}_2\norm{w}_3.
 \label{eq:reverse-pressure-flux}
\end{align}
Because \(w\) has zero mean, the homogeneous periodic Sobolev and
interpolation inequalities yield
\begin{equation}
 \norm{w}_{12/5}^2
 \leq C_\Omega\norm{w}_2^{3/2}\norm{\nabla w}_2^{1/2},
 \qquad
 \norm{w}_3
 \leq C_\Omega\norm{w}_2^{1/2}\norm{\nabla w}_2^{1/2}.
 \label{eq:reverse-interpolation}
\end{equation}
Thus H\"older's inequality in time, with exponents \((2,4,4)\), gives
\begin{align}
 &\int_0^L\norm{b}_6
 \left(\norm{w}_{12/5}^2+\norm{w}_2\norm{w}_3\right)\dd\rho\notag\\
 &\qquad\leq C_\Omega L^{1/4}BW^{3/2}G^{1/2}.
 \label{eq:flux-bound}
\end{align}
Integrating \eqref{eq:reverse-local-energy}, dropping its nonnegative
left-hand dissipation, and using \eqref{eq:flux-bound} proves
\eqref{eq:escape-general}.

For the second assertion choose \(\zeta\) equal to zero on \(B_r(a)\), equal
to one on \(\Omega\setminus B_{2r}(a)\), and satisfying
\(\norm{\nabla\zeta}_\infty\leq C_\Omega r^{-1}\) and
\(\norm{\Delta\zeta}_\infty\leq C_\Omega r^{-2}\).  Since
\(w_0\in\Hh(B_r(a))\), the initial localized term vanishes.  From
\eqref{eq:reverse-energy},
\[
 W\leq\norm{w_0}_2,
 \qquad G\leq(2\nu)^{-1/2}\norm{w_0}_2.
\]
Substitution in \eqref{eq:escape-general} gives
\eqref{eq:escape-ball}.

For general \(L^2\) data, take smooth solenoidal Galerkin approximations.
The global energy identity gives strong convergence in
\(C([0,L];L^2)\) and weak convergence of the gradients; the localized
identity passes by the displayed bounds and lower semicontinuity.  In the
ball case, membership in \(\Hh(B_r(a))\) permits the initial approximants to
be chosen in \(C_{c,\sigma}^\infty(B_r(a))\), so the support property is
preserved throughout the approximation.  This completes the proof.
\end{proof}

\section{The atomic full-tail adjoint}
\label{sec:ghost}

We now construct one backward state from the whole packet tail.  For each
selected packet define
\begin{equation}
 v_j(t):=U(\tau_j,t)^*q_j,
 \qquad t\leq\tau_j.
 \label{eq:terminal-pulses}
\end{equation}
Thus \(v_j\) solves \eqref{eq:adjoint} and satisfies
\begin{equation}
 \norm{v_j(t)}_2^2
 +2\nu\int_t^{\tau_j}\norm{\nabla v_j(s)}_2^2\dd s=1.
 \label{eq:pulse-energy}
\end{equation}

\begin{proposition}[Compact preterminal extraction]
\label{prop:ghost-extraction}
After passing to a subsequence without changing the already selected
packets, there are a field \(A\), a number \(a_\infty>0\), and a number
\(d_A\in(0,1]\) such that the following statements hold.
On each compact interval \([t_b,S]\Subset[t_b,T_*)\),
\begin{align}
 v_j&\longrightarrow A
 &&\text{strongly in }L^2(t_b,S;L^2),
 \label{eq:ghost-strong-compact}\\
 v_j(t)&\weakto A(t)
 &&\text{in }L^2\text{ for every fixed }t<T_*,
 \label{eq:ghost-fixed-weak}
\end{align}
and the gradients converge weakly in the corresponding energy space.
The limit solves \eqref{eq:adjoint} and
\begin{align}
 \ip{u(t)}{A(t)}&=a_\infty,
 &&t_b<t<T_*,
 \label{eq:ghost-pairing}\\
 A(t)&\weakto0,
 &&t\uparrow T_*,
 \label{eq:ghost-weak-zero}\\
 \norm{A(t)}_2^2&\longrightarrow d_A,
 &&t\uparrow T_*.
 \label{eq:ghost-terminal-norm}
\end{align}
Moreover,
\begin{equation}
 a_\infty^2\geq m.
 \label{eq:ghost-lower-pairing}
\end{equation}
\end{proposition}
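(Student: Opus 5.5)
The extraction is a diagonal compactness argument carried out on the family of backward pulses \(v_j\) in \eqref{eq:terminal-pulses}; the pairing conservation law then supplies the constant \(a_\infty\), and the catalogued mass capture supplies the lower bound.

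\textbf{Step 1: compactness and the equation.} By \eqref{eq:pulse-energy}, on each compact interval \([t_b,S]\) with \(S<\tau_j\) the pulses are bounded in \(L^\infty L^2\cap L^2H^1\). From the adjoint equation \eqref{eq:adjoint} and the smoothness of \(u\) on \([t_b,S]\), \(\partial_tv_j\) is bounded in \(L^2H^{-1}_\sigma\). Aubin--Lions then gives strong \(L^2L^2\) compactness, and equicontinuity in \(H^{-1}\) together with the \(L^2\) bound gives weak convergence at every fixed time. A diagonal argument over \(S_n\uparrow T_*\) produces a subsequence with \eqref{eq:ghost-strong-compact} and \eqref{eq:ghost-fixed-weak}. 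By Lemma~\ref{lem:frozen-thinning}, passing to a subsequence changes no packet. The weak formulation passes to the limit because the drift is fixed and smooth on compacts, so \(A\) solves \eqref{eq:adjoint}. Uniqueness of energy solutions and weak lower semicontinuity give \(\norm{A(t)}_2\le1\).

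\textbf{Step 2: constant pairing and the lower bound.} The pairing \(\ip{u(t)}{v_j(t)}\) is constant in \(t\le\tau_j\). Indeed, \(u\) solves \eqref{eq:NS-intro}, which is the forward constrained equation \eqref{eq:constrained} with \(h=u\), so \(u(t)=U(t,s)u(s)\); duality \eqref{eq:oseen-duality} then gives \(\ip{u(t)}{v_j(t)}=\ip{u(\tau_j)}{q_j}\). By \eqref{eq:hodge-full-mass} this value is \(\ge(\theta_j-2\widehat\delta_j\sqrt{\theta_j})^{1/2}\), which tends to \(\sqrt m\) from below in the limit. It is also at most \(\norm{u(\tau_j)}_2\le E_*\). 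Passing to a further subsequence, these numbers converge to some \(a_\infty\). Fixed-time weak convergence then gives \eqref{eq:ghost-pairing} with \(a_\infty^2\ge m>0\), which is \eqref{eq:ghost-lower-pairing}.

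\textbf{Step 3: terminal behaviour.} Apply \eqref{eq:oseen-adjoint-energy} to \(A\) itself. As a limit of adjoint solutions it is an energy solution on each \([s,t]\), so \(\norm{A(t)}_2^2\) is nondecreasing in \(t\). Since it is bounded by \(1\), it has a limit \(d_A\le1\), giving \eqref{eq:ghost-terminal-norm}. Positivity of \(d_A\) follows from \(a_\infty\le E_*\norm{A(t)}_2\) and \(a_\infty>0\).

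For \eqref{eq:ghost-weak-zero}, the key point is that \(v_j(t)=U(\tau_j,t)^*q_j\) with \(q_j\in\Hh(B_j)\), where \(B_j\) shrinks to \(a\). Fix \(\varphi\in L^2_\sigma\) and write \(\ip{A(t)}{\varphi}=\lim_j\ip{q_j}{U(\tau_j,t)\varphi}\). Because \(\operatorname{supp}q_j\subset B_j\), this is bounded by \(\norm{U(\tau_j,t)\varphi}_{L^2(B_j)}\). I would show this is small uniformly in \(j\) as \(t\uparrow T_*\), using two facts.
\begin{itemize}[leftmargin=2.2em]
\item Components of \(\varphi\) that are smooth, say trigonometric polynomials, evolve over the short interval \([t,\tau_j]\) by a contraction. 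On such an interval one hopes for strong continuity uniform in \(j\), that is, \(\norm{U(\tau_j,t)\varphi-\varphi}_2\) small. This is delicate because \(u\) degenerates near \(T_*\), so I expect to need Lemma~\ref{lem:oseen-escape}: choose \(\zeta\) localizing near \(a\), and note that mass of \(U(\cdot,t)\varphi\) near \(a\) can only grow by an amount controlled by \(L^{1/4}B\), where \(L\) is the remaining time length and \(B=\norm{u}_{L^2L^6}\) on the tail. Since \(\norm{u}_{L^2L^6}\) over a tail tends to zero by \eqref{eq:energy-assumptions} and Sobolev, this flux term vanishes.
\item Meanwhile \(\int\zeta\abs{\varphi}^2\) is small for \(\zeta\) supported in a small ball.
\end{itemize}
Combining the two gives \(\limsup_j\norm{U(\tau_j,t)\varphi}_{L^2(B_r(a))}\to0\) as \(t\uparrow T_*\) and then \(r\downarrow0\). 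A minor point is that \eqref{eq:escape-general} is stated for the forward Oseen equation with zero-mean data; \(\varphi\) can first be made mean-free, since the mean is orthogonal to \(q_j\), which has zero mean.

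\textbf{Main obstacle.} I expect the hardest step to be this weak-zero claim: it needs localization uniform in \(j\) over intervals ending at \(\tau_j\) that approach \(T_*\), where \(u\) has no pointwise control. The drift-independent escape estimate is exactly what makes this work.
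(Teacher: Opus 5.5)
Your proposal is correct. Steps 1--2 and the monotonicity and positivity parts of Step 3 coincide with the paper's argument. The genuine difference is how you prove \eqref{eq:ghost-weak-zero}.

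The paper never uses the spatial support of \(q_j\) at that point. For a fixed smooth solenoidal \(\varphi\), moving all derivatives onto \(\varphi\) gives \(\frac{\dd}{\dd t}\ip{v_j(t)}{\varphi}=\ip{v_j}{(u\cdot\nabla)\varphi}-\nu\ip{v_j}{\Delta\varphi}\). This is bounded by \(C_\varphi=E_*\norm{\nabla\varphi}_\infty+\nu\norm{\Delta\varphi}_2\), uniformly in \(j\) and all the way up to \(T_*\), because the drift enters only through \(\norm{u(t)}_2\le E_*\). Integrating over \([t,\tau_j]\) and using \(q_j\weakto0\) (orthonormality) gives \(\abs{\ip{A(t)}{\varphi}}\le C_\varphi(T_*-t)\), and density finishes. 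So the degeneration of \(u\) near \(T_*\) is no obstacle at the weak level, and this step is easy rather than the hardest. Running the same computation forward even yields the uniform strong continuity \(\norm{U(\tau_j,t)\varphi-\varphi}_2^2\le 2C_\varphi\norm{\varphi}_2(T_*-t)\) that your first bullet only hoped for.

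Your route instead bounds \(\abs{\ip{q_j}{U(\tau_j,t)\varphi}}\) by the mass of \(U(\tau_j,t)\varphi\) in \(B_r(a)\supset B_j\). You control that mass through \eqref{eq:escape-general} with \(b=u\) on \([t,\tau_j]\), \(W\le\norm{\varphi}_2\), \(G\le(2\nu)^{-1/2}\norm{\varphi}_2\), and \(B\le\norm{u}_{L^2(t,T_*;L^6)}\to0\). This is valid. It gives \(\limsup_{t\uparrow T_*}\abs{\ip{A(t)}{\varphi}}^2\le\norm{\varphi}_{L^2(B_{2r}(a))}^2\) for every \(r\), hence zero, for every \(\varphi\in L^2_\sigma\) without a density step.

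What each route buys: yours ties the weak vanishing to concentration at \(a\); it is essentially a dual form of Lemma~\ref{lem:ghost-localization}. But it consumes more structure: the shrinking supports of the packets, Lemma~\ref{lem:oseen-escape}, and tail enstrophy smallness. The paper's argument needs only orthonormality and the energy bound, gives a linear rate for smooth tests, and would work for any orthonormal packet sequence.
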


\begin{proof}
Fix \(S<T_*\).  For all sufficiently large \(j\), the energy identity
\eqref{eq:pulse-energy} bounds \(v_j\) in
\(L^\infty(t_b,S;L^2)\cap L^2(t_b,S;H^1)\).  Since \(u\) is smooth on this
compact interval, the equation bounds \(\partial_tv_j\) in
\(L^2(t_b,S;H^{-1})\).  Aubin--Lions compactness
\cite{Aubin1963,Simon1986}, a diagonal extraction, and weak continuity give
\eqref{eq:ghost-strong-compact}.  For a fixed smooth solenoidal test field
\(\varphi\), the scalar functions
\(t\mapsto\ip{v_j(t)}{\varphi}\) are equicontinuous on compact preterminal
intervals.  Almost-everywhere convergence following from
\eqref{eq:ghost-strong-compact}, followed by density, therefore gives
\eqref{eq:ghost-fixed-weak}.  Passing to the equation shows that \(A\) solves
\eqref{eq:adjoint}.

The Navier--Stokes state \(u\) itself solves the constrained equation
\eqref{eq:constrained} with initial datum \(u(t)\).  Duality consequently
gives
\begin{equation}
 \ip{u(t)}{v_j(t)}=\ip{u(\tau_j)}{q_j}.
 \label{eq:pulse-state-duality}
\end{equation}
The right-hand side is positive.  Passing to a further subsequence, it
converges to \(a_\infty\), and \eqref{eq:hodge-full-mass} implies
\eqref{eq:ghost-lower-pairing}.  Passing to the limit in
\eqref{eq:pulse-state-duality} proves \eqref{eq:ghost-pairing}.

For a fixed smooth solenoidal \(\varphi\), the adjoint equation and the
uniform \(L^2\) bound imply
\begin{equation}
 \left|\frac{\dd}{\dd t}\ip{v_j(t)}{\varphi}\right|
 \leq E_*\norm{\nabla\varphi}_\infty
      +\nu\norm{\Delta\varphi}_2=:C_\varphi.
 \label{eq:weak-trace-Lipschitz}
\end{equation}
The orthonormality of the packets gives \(q_j\weakto0\).  Integrating
\eqref{eq:weak-trace-Lipschitz} from \(t\) to \(\tau_j\) and then letting
\(j\to\infty\) yields
\begin{equation}
 \abs{\ip{A(t)}{\varphi}}
 \leq C_\varphi(T_*-t).
 \label{eq:weak-trace-rate}
\end{equation}
Density proves \eqref{eq:ghost-weak-zero}.

Finally, on each compact interval \([s,t]\subset[t_b,T_*)\), the limit
belongs to \(L^2(s,t;H^1)\cap L^\infty(s,t;L^2)\), and the adjoint equation
gives \(A_t\in L^2(s,t;H^{-1})\).  The Lions--Magenes time-continuity theorem
\cite{LionsMagenes1972}
therefore permits testing the equation by \(A\) and yields the exact backward
energy identity
\begin{equation}
 \norm{A(s)}_2^2
 +2\nu\int_s^t\norm{\nabla A(r)}_2^2\dd r
 =\norm{A(t)}_2^2,
 \qquad s<t<T_*.
 \label{eq:ghost-energy}
\end{equation}
The case \(s=t_b\) follows from the same theorem on \([t_b,t]\), or
equivalently by letting \(s\downarrow t_b\) and using the strong
\(L^2\)-continuity supplied by the energy class.  For every fixed \(t<T_*\),
\eqref{eq:ghost-fixed-weak}, weak lower semicontinuity, and
\eqref{eq:pulse-energy} give
\[
 \norm{A(t)}_2\leq\liminf_{j\to\infty}\norm{v_j(t)}_2\leq1.
\]
Moreover, \eqref{eq:ghost-energy} shows that
\(t\mapsto\norm{A(t)}_2^2\) is nondecreasing.  Hence the terminal norm limit
\(d_A\) exists and satisfies \(d_A\leq1\).  It is positive because
\eqref{eq:ghost-pairing}, \eqref{eq:ghost-lower-pairing}, and
\(\norm{u(t)}_2\leq E_*\) exclude \(A(t)\to0\) strongly.
\end{proof}

The next lemma is the only point at which spatial localization for the
reverse Oseen equation is needed.

\begin{lemma}[Terminal localization of the adjoint]
\label{lem:ghost-localization}
For every \(r>0\),
\begin{equation}
 \lim_{t\uparrow T_*}
 \norm{A(t)}_{L^2(\Omega\setminus B_r(a))}=0.
 \label{eq:ghost-off-atom}
\end{equation}
Consequently,
\begin{equation}
 \abs{A(t,x)}^2\dd x\weakstarto d_A\delta_a
 \qquad\text{as }t\uparrow T_*.
 \label{eq:ghost-measure-dA}
\end{equation}
\end{lemma}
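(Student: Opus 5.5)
The plan is to apply the ball version of Lemma~\ref{lem:oseen-escape} to each pulse \(v_j\) on a window \([t,\tau_j]\), pass to the limit \(j\to\infty\) at fixed \(t\), and then let \(t\uparrow T_*\). The first step is to recast \(v_j\) as a forward reverse-Oseen evolution. Put \(\rho=\tau_j-s\) and \(w(\rho)=v_j(\tau_j-\rho)\) for \(s\in[t,\tau_j]\). Then \(w\) solves \eqref{eq:reverse-oseen} on \([0,L]\), with \(L=\tau_j-t\), drift \(b(\rho)=-u(\tau_j-\rho)\) (sign to be checked against \eqref{eq:adjoint}), and initial datum \(w_0=q_j\). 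The drift is smooth and divergence-free on this compact preterminal interval, and \(\int q_j=0\). Since \(q_j\in\Hh(B_j)\), we have \(w_0\in\Hh(B_{R_j}(z_j))\).

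The next step is to place this inside a ball centred at \(a\). Because \(a\in B_{r_j}(z_j)\) and \(r_j<R_j\), one has \(B_j\subset B_{2R_j}(a)\). Applying \eqref{eq:escape-ball} with radius \(2R_j\) around \(a\) then gives
\[
 \norm{v_j(t)}_{L^2(\Omega\setminus B_{4R_j}(a))}^2
 \le C_\Omega R_j^{-1}\nu^{-1/4}(\tau_j-t)^{1/4}\norm{u}_{L^2(t,\tau_j;L^6)}
 +C_\Omega\nu R_j^{-2}(\tau_j-t).
\]
The difficulty is that \(R_j\to0\), so the right side blows up rather than vanishing as \(j\to\infty\). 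The radius must therefore not shrink with \(j\). The fix is to use only \(B_j\subset B_{r/2}(a)\) for all large \(j\), which follows from \eqref{eq:balls-to-a}. Monotonicity \(\Hh(B_j)\subset\Hh(B_{r/2}(a))\) then lets us apply the lemma with the fixed radius \(r/2\). For \(j\) large and any \(t<\tau_j\), this yields
\[
 \norm{v_j(t)}_{L^2(\Omega\setminus B_r(a))}^2
 \le C_\Omega r^{-1}\nu^{-1/4}(\tau_j-t)^{1/4}\norm{u}_{L^2(t,\tau_j;L^6)}
 +C_\Omega\nu r^{-2}(\tau_j-t).
\]
By Sobolev embedding and \eqref{eq:energy-assumptions}, \(\norm{u}_{L^2(t,T_*;L^6)}^2\le C_\Omega(D(t,T_*)+E_*^2(T_*-t))\), and this tends to \(0\) as \(t\uparrow T_*\). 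The bound is therefore uniform in \(j\) and of the form \(\epsilon_r(t)\to0\).

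The third step passes to the limit in \(j\). By \eqref{eq:ghost-fixed-weak}, \(v_j(t)\weakto A(t)\) in \(L^2\). Restriction to \(\Omega\setminus B_r(a)\) is weakly continuous, so weak lower semicontinuity gives \(\norm{A(t)}_{L^2(\Omega\setminus B_r(a))}^2\le\epsilon_r(t)\), and \eqref{eq:ghost-off-atom} follows.

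For \eqref{eq:ghost-measure-dA}, take \(\varphi\in C(\Omega)\) and split \(\int\varphi\abs{A(t)}^2\) into the contribution from \(B_r(a)\) and from its complement. The complement part tends to \(0\) by \eqref{eq:ghost-off-atom}. On \(B_r(a)\), \(\varphi\) differs from \(\varphi(a)\) by at most its oscillation there, \(\operatorname{osc}_{B_r(a)}\varphi\), times \(\norm{A(t)}_2^2\le1\). The mass in \(B_r(a)\) equals \(\norm{A(t)}_2^2\) minus the off-ball mass, and it tends to \(d_A\) by \eqref{eq:ghost-terminal-norm}. Letting \(t\uparrow T_*\) and then \(r\downarrow0\) gives the limit \(d_A\varphi(a)\).

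The main obstacle is the second step: one must recognise that the radius should be held fixed, using nestedness of the Hodge spaces, rather than taken as \(R_j\). One must also make sure the time-reversal puts \(v_j\) exactly in the setting of Lemma~\ref{lem:oseen-escape}, with the projected transport term \(\Pp[(u\cdot\nabla)A]\) rewritten as transport plus pressure gradient.
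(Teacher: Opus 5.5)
Your proposal is correct and follows the paper's proof essentially step by step. You time-reverse each pulse, use nestedness to place $q_j$ in $\Hh(B_{r/2}(a))$ with a radius fixed independently of $j$, and apply the ball version of the reverse-Oseen escape estimate with the tail-enstrophy bound on the drift. You then pass to $A(t)$ by weak lower semicontinuity and obtain the weak-star limit from the oscillation splitting. The one detail the paper makes explicit and you leave implicit is the reduction to radii small enough that $B_r(a)$ lies in a single Euclidean chart. This is harmless, because the off-ball norm is monotone in $r$.
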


\begin{proof}
Fix \(t<T_*\) and reverse the \(j\)-th pulse by setting
\begin{equation}
 w_j(\rho)=v_j(\tau_j-\rho),
 \qquad b_j(\rho)=-u(\tau_j-\rho),
 \qquad 0\leq\rho\leq\tau_j-t.
 \label{eq:reversed-pulse}
\end{equation}
The datum \(w_j(0)=q_j\) has zero mean and belongs to
\(\Hh(B_j)\), where \(B_j\downarrow\{a\}\).  The energy estimate and periodic Sobolev inequality
give, with \(\delta=T_*-t\),
\begin{equation}
 \norm{b_j}_{L^2_\rho L^6_x(0,\tau_j-t)}
 \leq C_\Omega\left(
   \norm{\nabla u}_{L^2_tL^2_x(T_*-\delta,T_*)}
   +E_*\delta^{1/2}\right).
 \label{eq:tail-drift-bound}
\end{equation}
It is enough to prove the assertion for radii small enough that
\(B_r(a)\) lies in one Euclidean chart; the general case follows by choosing
a smaller radius.  Fix such an \(r>0\).  Since \(B_j\downarrow\{a\}\), for
all sufficiently large \(j\) one has \(B_j\Subset B_{r/2}(a)\).  Nestedness
of the local solenoidal spaces gives
\(q_j\in\Hh(B_{r/2}(a))\).  Apply
Lemma~\ref{lem:oseen-escape} with initial radius \(r/2\).  Its right-hand side
is bounded by a quantity \(\omega_r(\delta)\) satisfying
\begin{equation}
 \omega_r(\delta)
 \leq C_{r,\nu,\Omega}\delta^{1/4}
 \left(
   \norm{\nabla u}_{L^2_tL^2_x(T_*-\delta,T_*)}
   +E_*\delta^{1/2}\right)
 +C_{r,\Omega}\nu\delta,
 \label{eq:off-atom-modulus}
\end{equation}
and \(\omega_r(\delta)\to0\) as \(\delta\downarrow0\).  Hence
\begin{equation}
 \norm{v_j(t)}_{L^2(\Omega\setminus B_r(a))}^2
 \leq\omega_r(T_*-t)
 \label{eq:pulse-off-atom-bound}
\end{equation}
for all sufficiently large \(j\).  Multiplication by
\(\one_{\Omega\setminus B_r(a)}\) is a bounded operator on \(L^2\), so
\eqref{eq:ghost-fixed-weak} and weak lower semicontinuity pass
\eqref{eq:pulse-off-atom-bound} directly to \(A(t)\).  Letting
\(t\uparrow T_*\) proves \eqref{eq:ghost-off-atom}.  Together with
\eqref{eq:ghost-terminal-norm}, this proves \eqref{eq:ghost-measure-dA}:
indeed, for \(\varphi\in C(\Omega)\),
\[
 \int_\Omega\varphi\abs{A(t)}^2\dd x
 -\varphi(a)\norm{A(t)}_2^2
 =\int_\Omega(\varphi-\varphi(a))\abs{A(t)}^2\dd x.
\]
Given \(\varepsilon>0\), choose \(r\) so that
\(\abs{\varphi(x)-\varphi(a)}<\varepsilon\) on \(B_r(a)\); the integral
outside \(B_r(a)\) tends to zero by \eqref{eq:ghost-off-atom}, while the
inside contribution is at most \(\varepsilon\norm{A(t)}_2^2\).  Letting
first \(t\uparrow T_*\) and then \(\varepsilon\downarrow0\) gives the claimed
weak-star limit.
\end{proof}

\begin{proposition}[Full-time Cauchy saturation]
\label{prop:cauchy-saturation}
The quantities in Proposition~\ref{prop:ghost-extraction} satisfy
\begin{equation}
 a_\infty=\sqrt m,
 \qquad d_A=1.
 \label{eq:cauchy-saturation}
\end{equation}
 In particular, \eqref{eq:full-mass-capture},
 \eqref{eq:constant-pairing}, \eqref{eq:adjoint-atom}, and
 \eqref{eq:atomic-alignment} hold.
\end{proposition}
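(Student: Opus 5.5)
The argument is a two-sided Cauchy--Schwarz squeeze on a small ball around the atom. The lower bound \(a_\infty^2\ge m\) from \eqref{eq:ghost-lower-pairing} meets an upper bound \(a_\infty^2\le m\,d_A\) coming from the localization of \(A\) in Lemma~\ref{lem:ghost-localization}. Combined with \(d_A\le1\), this forces equality everywhere.

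\textbf{Upper bound.} Fix \(r>0\) so small that \(\mu_*(\overline{B_r(a)})\le m+\varepsilon\), which is possible since \(\mu_*(\overline{B_r(a)})\downarrow\mu_*(\{a\})=m\). For \(t<T_*\), split the constant pairing \eqref{eq:ghost-pairing} as
\[
 a_\infty=\ip{u(t)}{A(t)}_{L^2(B_r(a))}+\ip{u(t)}{A(t)}_{L^2(\Omega\setminus B_r(a))}.
\]
The second term is bounded by \(E_*\norm{A(t)}_{L^2(\Omega\setminus B_r(a))}\), which tends to \(0\) by \eqref{eq:ghost-off-atom}. The first is at most \(\norm{u(t)}_{L^2(B_r(a))}\norm{A(t)}_2\). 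To control the ball mass, take a cutoff \(\psi\in C(\Omega)\) with \(\one_{B_r(a)}\le\psi\le\one_{B_{2r}(a)}\). The full-time convergence of Lemma~\ref{lem:unique-endpoint-measure} then gives
\[
 \limsup_{t\uparrow T_*}\norm{u(t)}_{L^2(B_r(a))}^2\le\mu_*(\overline{B_{2r}(a)}).
\]
Letting \(t\uparrow T_*\) and using \eqref{eq:ghost-terminal-norm} yields \(a_\infty\le(\mu_*(\overline{B_{2r}(a)}))^{1/2}d_A^{1/2}\). Letting \(r\downarrow0\) gives \(a_\infty^2\le m\,d_A\le m\). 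With \eqref{eq:ghost-lower-pairing} this forces \(a_\infty=\sqrt m\) and \(d_A=1\). The main point to verify is that the full time variable is used here rather than a subsequence; this is exactly what the uniqueness of \(\mu_*\) provides.

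\textbf{Consequences.} From \(d_A=1\) and \eqref{eq:ghost-measure-dA} we obtain \eqref{eq:adjoint-atom}, and \eqref{eq:constant-pairing} is \eqref{eq:ghost-pairing}. For \eqref{eq:full-mass-capture}, note that \(\ip{u(\tau_j)}{q_j}\) is bounded below by \eqref{eq:hodge-full-mass}, with \(\theta_j\to m\). It is bounded above by a localized version of the same argument: \(q_j\) is a unit vector supported in \(B_j\subset B_r(a)\), so
\[
 \ip{u(\tau_j)}{q_j}\le\norm{u(\tau_j)}_{L^2(B_r(a))},
\]
and the limsup of the right side is at most \(\mu_*(\overline{B_r(a)})^{1/2}\to\sqrt m\). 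Thus the whole sequence converges, not merely a subsequence.

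\textbf{Alignment \eqref{eq:atomic-alignment}.} Expand
\[
 \int_{B_r(a)}\abs{u-\sqrt m A}^2\dd x=\norm{u}_{L^2(B_r)}^2-2\sqrt m\ip{u}{A}_{L^2(B_r)}+m\norm{A}_{L^2(B_r)}^2.
\]
As \(t\uparrow T_*\), the first term has limsup at most \(\mu_*(\overline{B_r(a)})\). The cross pairing tends to \(\sqrt m\), since the total pairing equals \(\sqrt m\) and the exterior part vanishes by \eqref{eq:ghost-off-atom}. The last term tends to \(m\) by \(d_A=1\) together with the exterior vanishing. Hence the limsup is at most \(\mu_*(\overline{B_r(a)})-2m+m\), which tends to \(0\) as \(r\downarrow0\).
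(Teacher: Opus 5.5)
Your proposal is correct and follows the paper's proof. The squeeze \(m\le a_\infty^2\le m\,d_A\le m\) rests, as in the paper, on the localization of \(A\), the terminal norm \(d_A\), and the full-time endpoint convergence, and the alignment comes from expanding the square; you use the ball indicator with a cutoff or portmanteau bound, where the paper works with the smooth cutoff \(\chi_r\) throughout. The one variation is that you prove \eqref{eq:full-mass-capture} by a direct two-sided bound on \(\ip{u(\tau_j)}{q_j}\) instead of reading it off from the definition of \(a_\infty\) as the subsequential limit; this is valid, and it also yields \(a_\infty=\sqrt m\) without using the adjoint.
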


\begin{proof}
Choose \(0\leq\chi_r\leq1\), with \(\chi_r=1\) on \(B_r(a)\) and
\(\operatorname{supp}\chi_r\subset B_{2r}(a)\).  Lemma
\ref{lem:ghost-localization} and \eqref{eq:ghost-pairing} give, for every
\(t<T_*\),
\[
 a_\infty=\ip{\chi_ru(t)}{A(t)}
 +\ip{(1-\chi_r)u(t)}{A(t)},
\]
and the second term tends to zero as \(t\uparrow T_*\).  Cauchy--Schwarz,
followed by the full-time convergence in
Lemma~\ref{lem:unique-endpoint-measure} and by
\eqref{eq:ghost-terminal-norm}, therefore gives
\begin{equation}
 a_\infty^2
 \leq d_A\int_\Omega\chi_r^2\dd\mu_*.
 \label{eq:local-cross-upper}
\end{equation}
Letting \(r\downarrow0\) gives \(a_\infty^2\leq md_A\).  On the other hand,
\eqref{eq:ghost-lower-pairing} gives \(m\leq a_\infty^2\), while
\(d_A\leq1\).  Thus
\begin{equation}
 m\leq a_\infty^2\leq md_A\leq m,
 \label{eq:saturation-chain}
\end{equation}
which proves \eqref{eq:cauchy-saturation}.  The asserted conclusions now
follow from Proposition~\ref{prop:ghost-extraction}, Lemma
\ref{lem:ghost-localization}, and the definition of \(a_\infty\).

It remains to record the strong atomic-layer statement with its order of
limits explicit.  Let \(0\leq\chi\leq1\) be smooth and equal to one near
\(a\).  Expanding the square gives
\begin{align}
 \int_\Omega\chi\abs{u-\sqrt m A}^2\dd x
 ={}&\int_\Omega\chi\abs{u}^2\dd x
 +m\int_\Omega\chi\abs{A}^2\dd x\notag\\
 &-2\sqrt m\int_\Omega\chi\,u\cdot A\dd x.
 \label{eq:alignment-square}
\end{align}
As \(t\uparrow T_*\), the three terms on the right converge respectively to
\(\int\chi\dd\mu_*\), \(m\), and \(2m\).  The last limit follows from the
constant pairing and
\(\norm{(1-\chi)A(t)}_2\to0\).  Hence
\begin{equation}
 \lim_{t\uparrow T_*}
 \int_\Omega\chi\abs{u(t)-\sqrt m A(t)}^2\dd x
 =\int_\Omega\chi\dd\mu_*-m.
 \label{eq:fixed-cutoff-alignment}
\end{equation}
Taking \(\chi=\chi_r\) yields
\begin{equation}
 \limsup_{t\uparrow T_*}
 \int_{B_r(a)}\abs{u(t)-\sqrt m A(t)}^2\dd x
 \leq\mu_*(B_{2r}(a))-m.
 \label{eq:alignment-ball-bound}
\end{equation}
The right-hand side tends to zero as \(r\downarrow0\), proving
\eqref{eq:atomic-alignment}.
\end{proof}

\section{From the adjoint to full-tail saturation}
\label{sec:conveyor}

We next upgrade compact-preterminal convergence to a uniform statement on
the whole late triangle.  This is a norm-saturation argument in one fixed
Hilbert space.

\begin{lemma}[Strong full-tail convergence]
\label{lem:strong-full-tail}
For every fixed \(t_0\in[t_b,T_*)\),
\begin{align}
 v_j(t_0)&\longrightarrow A(t_0)
 &&\text{strongly in }L^2,
 \label{eq:pulse-strong-fixed}\\
 \one_{(t_0,\tau_j)}\nabla v_j
 &\longrightarrow\nabla A
 &&\text{strongly in }L^2((t_0,T_*)\times\Omega).
 \label{eq:pulse-gradient-strong}
\end{align}
\end{lemma}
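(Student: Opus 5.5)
The plan is a norm-saturation argument: weak convergence plus convergence of norms gives strong convergence. The key input is \(d_A=1\) from Proposition~\ref{prop:cauchy-saturation}, combined with the exact energy identities \eqref{eq:pulse-energy} for \(v_j\) and \eqref{eq:ghost-energy} for \(A\).

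First, let \(t\uparrow T_*\) in \eqref{eq:ghost-energy} and use \eqref{eq:ghost-terminal-norm} with \(d_A=1\). This gives the full-tail identity
\[
 \norm{A(t_0)}_2^2+2\nu\int_{t_0}^{T_*}\norm{\nabla A(r)}_2^2\dd r=1
 \qquad\text{for every } t_0<T_* .
\]
In particular \(\nabla A\in L^2((t_0,T_*)\times\Omega)\).

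Next, consider in the Hilbert space \(\mathcal X:=L^2(\Omega)\times L^2((t_0,T_*)\times\Omega)\) the vectors
\[
 X_j:=\bigl(v_j(t_0),\,\sqrt{2\nu}\,\one_{(t_0,\tau_j)}\nabla v_j\bigr),
 \qquad X:=\bigl(A(t_0),\,\sqrt{2\nu}\,\nabla A\bigr).
\]
By \eqref{eq:pulse-energy}, \(\norm{X_j}_{\mathcal X}=1\) whenever \(\tau_j>t_0\). By the identity above, \(\norm{X}_{\mathcal X}=1\) as well.

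The main step is to show \(X_j\weakto X\) in \(\mathcal X\).
\begin{itemize}
\item The first component converges weakly by \eqref{eq:ghost-fixed-weak}.
\item The second component is bounded in \(L^2((t_0,T_*)\times\Omega)\), so it suffices to identify every weak subsequential limit \(G\).
\item On each compact interval \((t_0,S)\) with \(S<T_*\), we have \(\one_{(t_0,\tau_j)}=1\) once \(\tau_j>S\). The weak gradient convergence stated in Proposition~\ref{prop:ghost-extraction} then gives \(G=\nabla A\) on \((t_0,S)\times\Omega\).
\item Since \(S<T_*\) is arbitrary and \(\bigcup_S(t_0,S)=(t_0,T_*)\), we conclude \(G=\nabla A\) almost everywhere. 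Hence the whole sequence converges weakly to \(X\).
\end{itemize}

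This is the step I expect to be the main obstacle, because mass could a priori escape to the terminal layer \((S,\tau_j)\). It is resolved without estimating that layer directly. The localization above only identifies the limit, while the norm bookkeeping is handled globally by the equality \(\norm{X}=1=\norm{X_j}\). That equality is exactly where \(d_A=1\) enters: if \(d_A<1\), the deficit \(1-d_A\) would be precisely the energy lost near \(T_*\), and strong convergence would fail.

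Finally, weak convergence together with \(\norm{X_j}=\norm{X}\) gives
\[
 \norm{X_j-X}^2=\norm{X_j}^2-2\ip{X_j}{X}+\norm{X}^2\to 0 .
\]
The two components of this strong convergence in \(\mathcal X\) are exactly \eqref{eq:pulse-strong-fixed} and \eqref{eq:pulse-gradient-strong}. This argument needs no further subsequence, because the limit \(X\) is already determined by the subsequence fixed in Proposition~\ref{prop:ghost-extraction}. For \(t_0=t_b\), the continuity of \(A\) at \(t_b\) noted after \eqref{eq:ghost-energy} supplies the same identity.
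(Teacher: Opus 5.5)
Your proposal is correct and is essentially the paper's proof. Like the paper, you identify the weak limit of the zero-extended gradients on all of \((t_0,T_*)\), combine the two exact unit-norm identities in the direct sum (the second one requiring \(d_A=1\)), and conclude strong convergence by norm saturation. The only cosmetic difference is that you identify the weak limit through uniqueness of subsequential limits, whereas the paper truncates at \(S\) and bounds the tail uniformly in \(j\); the two arguments are equivalent.
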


\begin{proof}
We first identify the weak limit on the complete terminal interval.  The
zero-extended gradients are uniformly bounded by
\eqref{eq:pulse-energy}.  On every \((t_0,S)\), \(S<T_*\), the extraction
in Proposition~\ref{prop:ghost-extraction} gives weak convergence of the
gradients to \(\nabla A\).  Let
\(\Phi\in L^2((t_0,T_*)\times\Omega)\).  After truncating \(\Phi\) at
\(S<T_*\), the compact-preterminal weak convergence applies, while
\begin{align}
 &\left|\ip{\one_{(t_0,\tau_j)}\nabla v_j-\nabla A}
                  {\Phi\one_{(S,T_*)}}\right|\notag\\
 &\quad\leq\left(
 \sup_j\norm{\one_{(t_0,\tau_j)}\nabla v_j}_{L^2_{t,x}}
 +\norm{\nabla A}_{L^2_{t,x}}\right)
 \norm{\Phi\one_{(S,T_*)}}_{L^2_{t,x}}.
 \label{eq:gradient-tail-test}
\end{align}
The last factor tends to zero as \(S\uparrow T_*\), uniformly in \(j\).
Hence
\begin{equation}
 \one_{(t_0,\tau_j)}\nabla v_j\weakto\nabla A
 \quad\text{in }L^2((t_0,T_*)\times\Omega).
 \label{eq:gradient-full-tail-weak}
\end{equation}

Next, the pulse identity gives the exact norm relation
\begin{equation}
 \norm{v_j(t_0)}_2^2
 +2\nu\norm{\one_{(t_0,\tau_j)}\nabla v_j}_{L^2_{t,x}}^2=1.
 \label{eq:pulse-total-one}
\end{equation}
Letting \(t\uparrow T_*\) in \eqref{eq:ghost-energy}, using monotone
convergence of the dissipation integral and \(d_A=1\), gives
\begin{equation}
 \norm{A(t_0)}_2^2
 +2\nu\norm{\nabla A}_{L^2((t_0,T_*)\times\Omega)}^2=1.
 \label{eq:ghost-total-one}
\end{equation}
Define
\begin{equation}
 Y_j=\left(v_j(t_0),\sqrt{2\nu}\,
          \one_{(t_0,\tau_j)}\nabla v_j\right),
 \qquad
 Y=\left(A(t_0),\sqrt{2\nu}\,\nabla A\right)
 \label{eq:direct-sum-Y}
\end{equation}
in
\(L^2_\sigma(\Omega)\oplus L^2((t_0,T_*)\times\Omega)\).  By
\eqref{eq:ghost-fixed-weak} and \eqref{eq:gradient-full-tail-weak},
\(Y_j\weakto Y\).  Equations \eqref{eq:pulse-total-one} and
\eqref{eq:ghost-total-one} give \(\norm{Y_j}=\norm{Y}=1\).  Therefore
\[
 \norm{Y_j-Y}^2
 =2-2\ip{Y_j}{Y}\longrightarrow0.
\]
The two orthogonal components converge strongly, proving
\eqref{eq:pulse-strong-fixed} and \eqref{eq:pulse-gradient-strong}.
\end{proof}

\begin{remark}[PDE input and Hilbert saturation]
\label{rem:pde-hilbert-split}
The terminal localization and Cauchy saturation in Section~\ref{sec:ghost}
are the PDE-specific steps that identify the weak limit and force its terminal
norm to be one.  Once those facts and the two exact energy identities are
available, Lemma~\ref{lem:strong-full-tail} is a Hilbert-space norm-saturation
argument on one fixed direct sum.  This separation is what converts compact
preterminal convergence into a uniform statement on the entire late
triangle.
\end{remark}

\begin{proof}[Proof of Theorem~\ref{thm:atom-conveyor}]
Apply Proposition~\ref{prop:catalog-selection} with any
\(\gamma_j\uparrow T_*\) and with
\(\delta_j^{\rm given}=\eta_j=2^{-j}\).  First perform the explicit
gap-two thinning and relabel the retained chain by \(j\).  Only after this
chain is frozen do we take the diagonal subsequence furnished by
Proposition~\ref{prop:ghost-extraction}.  Lemma~\ref{lem:frozen-thinning}
applies again to that further subsequence: every packet remains the original
packet, no projection is repeated, and the Hodge, harmonicity, mass, old-time
smallness, and passive-avoidance relations all persist after relabeling.
The catalogued geometry and full-mass lower estimate are contained in
Proposition~\ref{prop:catalog-selection}.
Proposition~\ref{prop:cauchy-saturation} upgrades the lower estimate to
\eqref{eq:full-mass-capture} and gives the asserted adjoint.

Fix once and for all the root time \(t_0=t_b\), independently of \(j\) and
\(k\).  For \(j<k\), the difference \(v_j-v_k\) solves the same homogeneous
adjoint equation on \([t_0,\tau_j]\).  Its energy identity is
\begin{align}
 &\norm{q_j-U(\tau_k,\tau_j)^*q_k}_2^2\notag\\
 &\quad=\norm{v_j(t_0)-v_k(t_0)}_2^2
 +2\nu\int_{t_0}^{\tau_j}
        \norm{\nabla(v_j-v_k)}_2^2\dd t.
 \label{eq:adjoint-difference-energy}
\end{align}
Introduce the Hilbert space
\[
 \mathcal X_{t_0}:=L^2_\sigma(\Omega)
 \oplus L^2((t_0,T_*)\times\Omega)
\]
and the direct-sum states
\[
 X_j:=\bigl(v_j(t_0),\sqrt{2\nu}\,
 \one_{(t_0,\tau_j)}\nabla v_j\bigr).
\]
Lemma~\ref{lem:strong-full-tail} says that \(X_j\) converges strongly in
\(\mathcal X_{t_0}\).  Since the right-hand side of
\eqref{eq:adjoint-difference-energy} is bounded above by
\(\norm{X_j-X_k}_{\mathcal X_{t_0}}^2\), it tends to zero uniformly for
\(k>j\geq J\) as \(J\to\infty\).  This proves
\eqref{eq:adjoint-conveyor}.  For each fixed \(j\), use
\eqref{eq:pulse-strong-fixed} with \(t_0=\tau_j\) and then let
\(k\to\infty\); this gives
\begin{equation}
 A(\tau_j)=\lim_{k\to\infty}U(\tau_k,\tau_j)^*q_k
 \quad\text{strongly in }L^2.
 \label{eq:moving-node-limit}
\end{equation}
Consequently,
\[
 \norm{A(\tau_j)-q_j}_2
 \leq\sup_{k>j}\norm{U(\tau_k,\tau_j)^*q_k-q_j}_2
 \leq\sup_{k>\ell\geq j}
 \norm{U(\tau_k,\tau_\ell)^*q_k-q_\ell}_2\longrightarrow0,
\]
which proves \eqref{eq:moving-locking} with the order of limits explicit.

Duality gives
\begin{equation}
 \operatorname{Re}\ip{U(\tau_k,\tau_j)q_j}{q_k}
 =\operatorname{Re}\ip{q_j}{U(\tau_k,\tau_j)^*q_k}
 \longrightarrow1
 \label{eq:forward-saturation}
\end{equation}
uniformly on the late triangle.  Contractivity and \(\norm{q_k}_2=1\) imply
\begin{equation}
 \norm{U(\tau_k,\tau_j)q_j-q_k}_2^2
 \leq2\left[1-
 \operatorname{Re}\ip{U(\tau_k,\tau_j)q_j}{q_k}\right],
 \label{eq:forward-error-control}
\end{equation}
which proves \eqref{eq:forward-conveyor} uniformly.

Let \(e_{j,k}^f=\norm{U(\tau_k,\tau_j)q_j-q_k}_2\) and
\(e_{j,k}^a=\norm{U(\tau_k,\tau_j)^*q_k-q_j}_2\).  The forward and
backward energy identities in Proposition~\ref{prop:oseen-family} give
\begin{align}
 2\nu\int_{\tau_j}^{\tau_k}
 \norm{\nabla U(t,\tau_j)q_j}_2^2\dd t
 &=1-\norm{U(\tau_k,\tau_j)q_j}_2^2
 \leq2e_{j,k}^f,
 \label{eq:forward-action-control}\\
 2\nu\int_{\tau_j}^{\tau_k}
 \norm{\nabla U(\tau_k,t)^*q_k}_2^2\dd t
 &=1-\norm{U(\tau_k,\tau_j)^*q_k}_2^2
 \leq2e_{j,k}^a.
 \label{eq:adjoint-action-control}
\end{align}
Taking the late-triangle suprema proves
\eqref{eq:vanishing-action} and \eqref{eq:vanishing-adjoint-action}.
\end{proof}

\begin{proof}[Proof of Corollary~\ref{cor:passive-separation}]
The explicit gap-two selection and Lemma~\ref{lem:frozen-thinning} give
\eqref{eq:passive-avoidance} without altering any packet used above.
Taking \(k=j+1\) in forward saturation gives
\(\operatorname{Re}\ip{U_jq_j}{q_{j+1}}\to1\), whereas
\eqref{eq:passive-avoidance} gives
\(\ip{S_jq_j}{q_{j+1}}\to0\).  Their difference is
\eqref{eq:leray-passive-separation}.
\end{proof}

\section{Second-order action forced by full-tail saturation}
\label{sec:action}

We now forget the endpoint atom and retain only one full-tail saturated family for the
fixed parent \(u\).  Fix a sufficiently late root \(J\) and set
\begin{equation}
 H(t)=H^J(t):=U(t,\tau_J)q_J,
 \qquad t\geq\tau_J.
 \label{eq:fixed-root-H}
\end{equation}
The local-Hodge construction gives zero spatial mean for \(q_J\), and the
constrained equation preserves it.  Thus the homogeneous periodic Sobolev
estimates used below are legitimate.

For \(j\geq J+1\), define
\begin{align}
 I_j&=[\tau_j,\tau_{j+1}],&
 \ell_j&=\abs{I_j},\notag\\
 d_j&=\int_{I_j}\norm{\nabla u(t)}_2^2\dd t,&
 a_j&=\int_{I_j}\norm{\nabla H(t)}_2^2\dd t,\notag\\
 K_j&=\int_{I_j}\norm{\Delta H(t)}_2^2\dd t,&
 \widetilde d_j&=d_j+E_*^2\ell_j.
 \label{eq:cell-actions}
\end{align}
The energy identities for \(u\) and \(H\) imply
\begin{equation}
 \sum_{j\geq J+1}d_j<\infty,\qquad
 \sum_{j\geq J+1}a_j<\infty,\qquad
 \sum_{j\geq J+1}\ell_j<\infty.
 \label{eq:first-order-summability}
\end{equation}

\begin{lemma}[Cellwise second-order action]
\label{lem:cell-H2-action}
There is \(J_0\) such that, for every fixed root \(J\geq J_0\) and every
\(j\geq J+1\),
\begin{equation}
 K_j\geq
 \min\left\{
 c_1\widetilde d_j^{-2}a_j^{-1},
 c_2\nu^{-2}\ell_j^{-1}
 \right\},
 \label{eq:H2-action-lower}
\end{equation}
where \(c_1>0\) depends only on the periodic Sobolev constant and
\(c_2>0\) is absolute.  The first entry is interpreted as \(+\infty\) when
\(a_j\widetilde d_j=0\).  In particular,
\begin{equation}
 K_j\longrightarrow\infty
 \qquad\text{as }j\to\infty.
 \label{eq:H2-action-divergence}
\end{equation}
\end{lemma}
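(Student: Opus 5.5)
The plan is to turn the forward saturation \eqref{eq:forward-conveyor} at a fixed root into a cellwise displacement lower bound for \(H\). Then I will show that, on a cell that is short in time and carries little drift and first-order action, such a displacement can only be paid for by a large second-order action \(K_j\).

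\emph{Displacement.} Let \(\varepsilon_J:=\sup_{k>j\ge J}\norm{U(\tau_k,\tau_j)q_j-q_k}_2\), which tends to zero by Theorem~\ref{thm:atom-conveyor}(d). Choose \(J_0\) with \(\varepsilon_{J}\le\frac14\) for \(J\ge J_0\). For a fixed root \(J\ge J_0\) and every \(k>J\), this gives \(\norm{H(\tau_k)-q_k}_2\le\frac14\). Since \(q_j\perp q_{j+1}\) are unit vectors, for \(j\ge J+1\) we get
\[
 \norm{H(\tau_{j+1})-H(\tau_j)}_2\ge\sqrt2-\tfrac12\ge\tfrac12 .
\]
Regularity is not an issue on \(I_j\). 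Proposition~\ref{prop:oseen-family} gives \(H(\tau_j)\in H^1\) (positive delay from \(\tau_J\)), and \(u\) is smooth on the compact interval \(I_j\subset[t_b,T_*)\). Moreover, if \(K_j=\infty\) there is nothing to prove. So we may assume \(\Delta H\in L^2(I_j;L^2)\), whence \(\partial_tH=\nu\Delta H-\Pp[(u\cdot\nabla)H]\in L^1(I_j;L^2)\), and
\[
 \tfrac12\le\int_{I_j}\norm{\Pp[(u\cdot\nabla)H]}_2\dd t+\nu\int_{I_j}\norm{\Delta H}_2\dd t .
\]

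\emph{Estimating the two terms.} For the viscous term, Cauchy--Schwarz gives \(\nu\ell_j^{1/2}K_j^{1/2}\). For the transport term, I use \(\norm{\Pp}\le1\), Hölder, the Sobolev bound \(\norm{u}_6^2\le C_\Omega(\norm{\nabla u}_2^2+E_*^2)\), and the homogeneous interpolation \(\norm{\nabla H}_3\le C_\Omega\norm{\nabla H}_2^{1/2}\norm{\Delta H}_2^{1/2}\). The interpolation is legitimate because \(\nabla H\) has zero mean, and \(H\) itself has zero mean as noted before \eqref{eq:cell-actions}. Hölder in time with exponents \((2,4,4)\) then yields
\[
 \int_{I_j}\norm{u}_6\norm{\nabla H}_3\dd t\le C_\Omega\,\widetilde d_j^{1/2}a_j^{1/4}K_j^{1/4}.
\]
One of the two contributions must therefore be at least \(\frac14\). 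This gives either \(K_j\ge(4C_\Omega)^{-4}\widetilde d_j^{-2}a_j^{-1}\) or \(K_j\ge(16\nu^2\ell_j)^{-1}\), which is \eqref{eq:H2-action-lower} with \(c_1\) depending only on the Sobolev constant and \(c_2=1/16\). When \(a_j\widetilde d_j=0\), the first alternative cannot occur, consistent with the \(+\infty\) convention.

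\emph{Divergence.} By \eqref{eq:first-order-summability}, \(d_j\), \(a_j\) and \(\ell_j\) tend to zero, so \(\widetilde d_j\to0\). Both entries of the minimum then tend to \(+\infty\), which proves \eqref{eq:H2-action-divergence}. The main point needing care is the uniformity of the displacement bound: it has to hold for every cell \(j\ge J+1\) with the root \(J\) fixed. This is exactly what the triangular supremum in \eqref{eq:forward-conveyor} supplies, and it is why \(J_0\) is chosen from \(\varepsilon_J\) rather than cell by cell.
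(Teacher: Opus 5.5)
Your proof is correct and follows the paper's argument essentially step for step. Forward saturation at the fixed root, together with orthogonality of consecutive packets, gives a cellwise displacement of \(H\) bounded below. That displacement is then bounded above by \(C_\Omega\widetilde d_j^{1/2}a_j^{1/4}K_j^{1/4}+\nu\ell_j^{1/2}K_j^{1/2}\), using the Sobolev bound for \(u\), the \(L^3\) interpolation of \(\nabla H\), and H\"older in time with exponents \((2,4,4)\). The differences are cosmetic: you use the threshold \(\varepsilon_J\le\tfrac14\) and displacement \(\tfrac12\) where the paper uses \(\tfrac18\) and \(c_0=\sqrt2-\tfrac14\). You also add two justifications the paper leaves implicit, namely \(\partial_tH\in L^1(I_j;L^2)\) and the \(a_j\widetilde d_j=0\) convention.
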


\begin{proof}
Set
\begin{equation}
 \varepsilon_J^f=
 \sup_{k>j\geq J}\norm{U(\tau_k,\tau_j)q_j-q_k}_2.
 \label{eq:forward-tail-error}
\end{equation}
Choose \(J_0\) so that \(\varepsilon_J^f\leq1/8\) for \(J\geq J_0\), and
put
\begin{equation}
 c_0=\sqrt2-\frac14>0.
 \label{eq:fixed-displacement-constant}
\end{equation}
For \(j\geq J+1\), the cocycle law and forward saturation give
\[
 \norm{H(\tau_j)-q_j}_2\leq\varepsilon_J^f,
 \qquad
 \norm{H(\tau_{j+1})-q_{j+1}}_2\leq\varepsilon_J^f.
\]
Since consecutive packets are orthogonal,
\begin{equation}
 \norm{H(\tau_{j+1})-H(\tau_j)}_2
 \geq\norm{q_{j+1}-q_j}_2-2\varepsilon_J^f
 \geq c_0.
 \label{eq:fixed-root-displacement}
\end{equation}

The descendant solves
\begin{equation}
 H_t=-\Pp[(u\cdot\nabla)H]+\nu\Delta H.
 \label{eq:H-fixed-root-equation}
\end{equation}
The periodic Sobolev and interpolation inequalities give
\begin{equation}
 \norm{u}_6\leq C_\Omega(\norm{\nabla u}_2+\norm{u}_2),
 \qquad
 \norm{\nabla H}_3
 \leq C_\Omega\norm{\nabla H}_2^{1/2}
                    \norm{\Delta H}_2^{1/2}.
 \label{eq:cell-interpolation}
\end{equation}
Integrating \eqref{eq:H-fixed-root-equation} over \(I_j\), using
\(L^2\)-contractivity of \(\Pp\), and then applying H\"older in time with
exponents \((2,4,4)\), we obtain
\begin{align}
 \norm{H(\tau_{j+1})-H(\tau_j)}_2
 &\leq \int_{I_j}\norm{u}_6\norm{\nabla H}_3\dd t
       +\nu\int_{I_j}\norm{\Delta H}_2\dd t\notag\\
 &\leq C_\Omega\widetilde d_j^{1/2}a_j^{1/4}K_j^{1/4}
       +\nu\ell_j^{1/2}K_j^{1/2}.
 \label{eq:cell-displacement-upper}
\end{align}
Combining \eqref{eq:fixed-root-displacement} and
\eqref{eq:cell-displacement-upper}, at least one term on the final
right-hand side is at least \(c_0/2\).  Therefore
\[
 K_j\geq
 \min\left\{
 \left(\frac{c_0}{2C_\Omega}\right)^4
 \widetilde d_j^{-2}a_j^{-1},
 \frac{c_0^2}{4}\nu^{-2}\ell_j^{-1}
 \right\},
\]
which is \eqref{eq:H2-action-lower}.  By
\eqref{eq:first-order-summability},
\(\widetilde d_j\to0\), \(a_j\to0\), and \(\ell_j\to0\).  Both entries in
the minimum tend to \(+\infty\), proving
\eqref{eq:H2-action-divergence}.
\end{proof}

Recall \(\Gamma_J=\norm{\nabla H}_2^2\),
\(\Kk_J=\norm{\Delta H}_2^2\), and \(\mathcal P_J\) from
\eqref{eq:root-definitions}--\eqref{eq:production-definition}.  On every
strictly preterminal interval,
\begin{equation}
 \frac12\Gamma_J'(t)+\nu\Kk_J(t)=\mathcal P_J(t).
 \label{eq:fixed-root-enstrophy}
\end{equation}

\begin{corollary}[Unweighted positive production]
\label{cor:positive-production}
Every full-tail saturated Hodge family satisfies
\begin{equation}
 \int_{t_0}^{T_*}(\mathcal P_J(t))_+\dd t=\infty
 \label{eq:positive-production-infinite}
\end{equation}
for every sufficiently late fixed root \(J\), where \(t_0=\tau_{J+1}\).
\end{corollary}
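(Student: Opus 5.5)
The plan is to integrate the enstrophy identity \eqref{eq:fixed-root-enstrophy} over \([t_0,t]\), with \(t_0=\tau_{J+1}\), and to compare the dissipation term with the divergent second-order action from Lemma~\ref{lem:cell-H2-action}.

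\emph{Step 1: integrate the identity.} For \(t_0<t<T_*\), integration gives
\[
 \tfrac12\Gamma_J(t)-\tfrac12\Gamma_J(t_0)+\nu\int_{t_0}^{t}\Kk_J\dd s
 =\int_{t_0}^{t}\mathcal P_J\dd s
 \le\int_{t_0}^{t}(\mathcal P_J)_+\dd s .
\]
Since \(\Gamma_J(t)\ge0\), dropping it leaves
\[
 \nu\int_{t_0}^{t}\Kk_J\dd s\le\tfrac12\Gamma_J(t_0)+\int_{t_0}^{T_*}(\mathcal P_J)_+\dd s .
\]
The term \(\Gamma_J(t_0)=\norm{\nabla U(\tau_{J+1},\tau_J)q_J}_2^2\) is finite. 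This follows from the delayed estimate \eqref{eq:oseen-delayed-H1}, because \(u\) is smooth on \([\tau_J,\tau_{J+1}]\) and so \(\int\norm{u}_\infty^2\) is finite there.

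\emph{Step 2: show the left side diverges.} As \(t\uparrow T_*\), the integral \(\int_{t_0}^{t}\Kk_J\) tends to \(\sum_{j\ge J+1}K_j\). This uses the fact that \(\Kk_J\ge0\), that the cells \(I_j\) tile \([\tau_{J+1},T_*)\) because \(\tau_j\uparrow T_*\), and monotone convergence. By Lemma~\ref{lem:cell-H2-action}, for \(J\ge J_0\) we have \(K_j\to\infty\), so the series diverges. This also gives \eqref{eq:intro-root-action}. Hence \(\int_{t_0}^{T_*}(\mathcal P_J)_+=\infty\).

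\emph{The delicate point.} The identity \eqref{eq:fixed-root-enstrophy} needs justification on each strictly preterminal interval starting at \(t_0\), since \(q_J\) itself is only \(L^2\). This is handled by the delay: at \(t_0>\tau_J\) the descendant \(H\) lies in \(H^1\), and on compact preterminal intervals the parent is smooth. Standard parabolic regularity then makes \(H\) regular enough to test by \(-\Delta H\) on \([t_0,t]\) for each \(t<T_*\).

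One must also check that Lemma~\ref{lem:cell-H2-action} applies to any full-tail saturated family, not only the atom-generated one. Its proof used only forward saturation \eqref{eq:forward-conveyor}, orthonormality of the packets, the energy bounds, and the zero mean of \(q_J\). All of these are part of Definition~\ref{def:perfect-conveyor} together with the local-Hodge geometry, so the lemma applies. I expect no further obstacle.
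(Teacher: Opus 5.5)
Your proposal is correct and follows essentially the same route as the paper: integrate the enstrophy identity \eqref{eq:fixed-root-enstrophy} from $t_0=\tau_{J+1}$, discard the nonnegative terminal enstrophy, and invoke the divergence $K_j\to\infty$ from Lemma~\ref{lem:cell-H2-action}. The paper integrates up to $\tau_{N+1}$ and sums cells instead of letting $t\uparrow T_*$ by monotone convergence, and your explicit checks — that $\Gamma_J(t_0)<\infty$ via \eqref{eq:oseen-delayed-H1}, and that Lemma~\ref{lem:cell-H2-action} needs only the properties in Definition~\ref{def:perfect-conveyor} — supply details the paper leaves implicit.
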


\begin{proof}
For a finite prefix, \eqref{eq:fixed-root-enstrophy} gives
\begin{equation}
 \int_{t_0}^{\tau_{N+1}}\mathcal P_J(t)\dd t
 =\frac12\bigl[\Gamma_J(\tau_{N+1})-\Gamma_J(t_0)\bigr]
  +\nu\sum_{j=J+1}^{N}K_j.
 \label{eq:production-prefix}
\end{equation}
The last sum diverges by Lemma~\ref{lem:cell-H2-action}, while the terminal
enstrophy is nonnegative.  Hence the positive part cannot be integrable.
\end{proof}

\begin{proof}[Proof of Corollary~\ref{cor:parent-budget}]
For a sufficiently late fixed root \(J\), Lemma~\ref{lem:cell-H2-action}
gives
\begin{equation}
 \int_{\tau_{J+1}}^{T_*}
 \norm{\Delta U(t,\tau_J)q_J}_2^2\dd t
 =\sum_{j\geq J+1}K_j=\infty.
 \label{eq:root-infinite-budget}
\end{equation}
Since \(\norm{q_J}_2=1\), this proves
\eqref{eq:parent-budget-failure}; the atomless criterion follows by
contraposition.

Assume now \eqref{eq:serrin-budget-class}.  Fix
\(t_c\leq s<r<T_*\), let \(H(t)=U(t,s)h_s\), and normalize
\(\norm{h_s}_2=1\).  Proposition~\ref{prop:oseen-family} gives the uniform
positive-delay bound
\begin{equation}
 C_{\rm sm}:=
 \sup_{\norm{h_s}_2=1}\norm{\nabla U(r,s)h_s}_2^2<\infty.
 \label{eq:delayed-smoothing}
\end{equation}
The same positive delay justifies the enstrophy identity on every
\([r,R]\), \(R<T_*\), first for smooth data and then by approximation.

For \(3<q\leq\infty\), define
\begin{equation}
 \vartheta_q=\frac3q,\qquad
 r_q=\frac{2q}{q-2},\qquad
 p_q=\frac{2}{1-\vartheta_q}=\frac{2q}{q-3},\qquad
 \alpha_q=\frac{1+\vartheta_q}{1-\vartheta_q},
 \label{eq:serrin-exponents}
\end{equation}
with the usual values \(\vartheta_\infty=0\), \(r_\infty=2\), and
\(p_\infty=2\).  Since
\(1/q+1/r_q=1/2\), periodic interpolation gives
\begin{equation}
 \norm{\nabla H}_{r_q}
 \leq C_\Omega
 \norm{\nabla H}_2^{1-\vartheta_q}
 \norm{\Delta H}_2^{\vartheta_q}.
 \label{eq:serrin-gradient-interpolation}
\end{equation}
Writing
\(\Gamma=\norm{\nabla H}_2^2\) and
\(K=\norm{\Delta H}_2^2\), we obtain
\begin{align}
 \abs{\mathcal P_{s,h_s}(t)}
 &\leq\norm{\Delta H}_2\norm{u}_q\norm{\nabla H}_{r_q}\notag\\
 &\leq C_\Omega\norm{u}_q
       \Gamma^{(1-\vartheta_q)/2}
       K^{(1+\vartheta_q)/2}\notag\\
 &\leq\frac\nu2K
 +C_{\Omega,q}\nu^{-\alpha_q}\norm{u}_q^{p_q}\Gamma.
 \label{eq:budget-enstrophy-bound}
\end{align}
The exponents in the final Young inequality are
\(2/(1+\vartheta_q)\) and \(2/(1-\vartheta_q)=p_q\).

Set
\[
 f_q(t)=C_{\Omega,q}\nu^{-\alpha_q}\norm{u(t)}_q^{p_q}.
\]
The enstrophy identity and \eqref{eq:budget-enstrophy-bound} imply
\begin{equation}
 \Gamma'(t)+\nu K(t)\leq2f_q(t)\Gamma(t).
 \label{eq:budget-gronwall-inequality}
\end{equation}
The condition \(2/p+3/q\leq1\) is equivalent to \(p\geq p_q\).  Since the
time interval is finite, \(u\in L^p_tL^q_x\) implies
\(f_q\in L^1(r,T_*)\).  Gronwall's inequality and
\eqref{eq:delayed-smoothing} yield, uniformly over unit \(h_s\),
\begin{equation}
 \sup_{r\leq t<T_*}\Gamma(t)
 \leq C_{\rm sm}
 \exp\!\left(2\int_r^{T_*}f_q(t)\dd t\right)
 =:M_{u,\nu,q,s,r}<\infty.
 \label{eq:budget-uniform-Gamma}
\end{equation}
Integrating \eqref{eq:budget-gronwall-inequality} to \(R<T_*\) gives
\begin{equation}
 \nu\int_r^R K(t)\dd t
 \leq C_{\rm sm}
 +2M_{u,\nu,q,s,r}\int_r^{T_*}f_q(t)\dd t.
 \label{eq:budget-finite-serrin}
\end{equation}
Letting \(R\uparrow T_*\) and taking the supremum over unit \(h_s\) proves
\eqref{eq:parent-budget-criterion}.
\end{proof}

\section{Scope of the results}
\label{sec:scope}

The argument uses the smooth-preterminal Navier--Stokes energy identity, but
neither maximality of \(T_*\), a terminal value \(u(T_*)\), nor strong
\(L^2\) convergence.  The endpoint measure is obtained along the full time
variable, and one point atom is sufficient to trigger the construction.  The
result is same-parent at every stage: the catalogue, packets, forward
propagators, backward adjoint, and fixed-root descendants are all generated
by one unforced Navier--Stokes orbit.  No scale is assigned an independent
drift or an independently optimized adjoint.

Theorem~\ref{thm:atom-conveyor} is complementary to partial regularity and
critical-norm concentration.  Those theories constrain singular sets or the
size of norms near a possible singularity.  Here the output is an ordered
Hilbert-space genealogy, uniform on the full late triangle, together with a
second-order action cost.  Corollary~\ref{cor:parent-budget} removes the
atom-selected family from the resulting obstruction by passing to the
operator budget \(\mathfrak R_u\), which depends only on the parent
propagator.  The complete nonendpoint Serrin range in
\eqref{eq:serrin-budget-class} provides a natural class of sufficient
conditions for finiteness of that budget.

Periodicity is used in three identifiable places: the pressure representation
by periodic Riesz transforms, the nested local-Hodge construction inside
Euclidean chart balls, and the drift-independent periodic Nash estimate.
The saturation mechanism itself is formulated at the level of the Oseen
evolution family and its energy identities.  Analogues on domains with
boundary or for solutions available only through a local energy inequality
would therefore require corresponding pressure, local-Hodge, and evolution
estimates.  Appendix~\ref{sec:endpoint-tests} records the family-adapted
endpoint test-space consequence of the cellwise action lower bound.

\appendix

\section{Family-adapted endpoint tests}
\label{sec:endpoint-tests}

The cellwise divergence also forces discontinuity against a single endpoint
probe that remains fixed as finite prefixes increase.  This appendix
constructs that probe.  For the smooth preterminal function
\(\mathcal P_J\), its distributional derivative on \((t_0,T_*)\) is
normalized by
\begin{equation}
 \ip{\mathcal P_J'}{\varphi}
 :=-\int_{t_0}^{T_*}\mathcal P_J(t)\varphi'(t)\dd t,
 \qquad \varphi\in C_c^\infty(t_0,T_*).
 \label{eq:Pprime-definition}
\end{equation}

\begin{proposition}[Endpoint test-space obstruction]
\label{prop:endpoint-tests}
Assume \eqref{eq:energy-assumptions} and suppose that the same parent \(u\)
supports a full-tail saturated Hodge family.  For every sufficiently large
fixed root \(J\), set \(t_0=\tau_{J+1}\).  There exists a nonnegative weight
\begin{equation}
 B^J\in C([t_0,T_*])\cap W^{1,1}(t_0,T_*),
 \qquad (B^J)'\le0,
 \qquad B^J(T_*)=0,
 \label{eq:B-properties}
\end{equation}
depending on \(u\), the saturated family, \(J\), and the cutoff construction, but
not on a finite prefix \(N\), such that
\begin{equation}
 \int_{t_0}^{T_*}B^J(t)\Kk_J(t)\dd t=\infty,
 \qquad
 \int_{t_0}^{T_*}B^J(t)(\mathcal P_J(t))_+\dd t=\infty.
 \label{eq:weighted-divergence}
\end{equation}
Its primitive
\begin{equation}
 \kappa^J(t):=\int_t^{T_*}B^J(s)\dd s
 \label{eq:kappa-definition}
\end{equation}
belongs to \(C^1([t_0,T_*])\cap W^{2,1}(t_0,T_*)\) and satisfies
\begin{equation}
 \kappa^J(t)=o(T_*-t).
 \label{eq:kappa-small}
\end{equation}
There are terminally truncated primitives \(\kappa_N^J\), each vanishing on
a neighbourhood of \(T_*\), with
\begin{equation}
 \kappa_N^J\longrightarrow\kappa^J
 \quad\text{strongly in }C^1\cap W^{2,1},
 \label{eq:kappa-convergence}
\end{equation}
such that, for every fixed smooth left cutoff \(\eta\) that vanishes near
\(t_0\) and equals one on a terminal tail,
\begin{equation}
 \ip{\mathcal P_J'}{\eta\kappa_N^J}
 \ge \frac\nu2N-C_{J,\eta}\longrightarrow\infty.
 \label{eq:distribution-divergence}
\end{equation}
After reindexing the first retained cell, \(N\) in
\eqref{eq:distribution-divergence} is the number of retained cells.
Consequently \(\mathcal P_J'\) has no continuous extension to the endpoint
test spaces specified in Corollary~\ref{cor:endpoint-obstruction}.
\end{proposition}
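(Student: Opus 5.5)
The construction of the weight \(B^J\) rests on Lemma~\ref{lem:cell-H2-action}: for a late root \(J\) the cell actions \(K_j\) diverge, and the cell lengths \(\ell_j\) are summable by \eqref{eq:first-order-summability}. I would take \(B^J\) to be a nonincreasing continuous piecewise-linear weight with \(B^J(T_*)=0\), chosen cellwise. First, thin to a subsequence of retained cells \(j_1<j_2<\cdots\) such that \(K_{j_n}\ge 2^n\); this is possible because \(K_j\to\infty\). Then set \(B^J\equiv b_n\) on the retained cell \(I_{j_n}\), with \(b_n:=2^{-n}\cdot n^{-1}\)-type values, or more simply \(b_n\downarrow0\) with \(b_nK_{j_n}\ge1\). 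Between consecutive retained cells, interpolate linearly and monotonically. This gives \(\int B^J\Kk_J\ge\sum_n b_nK_{j_n}=\infty\). Continuity at \(T_*\) holds because \(b_n\to0\), and \(W^{1,1}\) regularity follows from monotonicity and boundedness, since the total variation is at most \(b_1\). Then \(\kappa^J(t)=\int_t^{T_*}B^J\) is \(C^1\) with \((\kappa^J)'=-B^J\in W^{1,1}\). Moreover \(\kappa^J(t)\le B^J(t)(T_*-t)=o(T_*-t)\), since \(B^J\) is nonincreasing and tends to \(0\).

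For the positive-production divergence, integrate \eqref{eq:fixed-root-enstrophy} against \(B^J\) on \([t_0,R]\) and integrate by parts:
\[
\int_{t_0}^R B^J\mathcal P_J=\tfrac12\bigl[B^J\Gamma_J\bigr]_{t_0}^R-\tfrac12\int_{t_0}^R (B^J)'\Gamma_J+\nu\int_{t_0}^R B^J\Kk_J .
\]
Since \((B^J)'\le0\) and \(\Gamma_J\ge0\), the middle term is nonnegative, and the boundary term at \(R\) is nonnegative. Hence
\[
\int B^J\mathcal P_J\ge -\tfrac12B^J(t_0)\Gamma_J(t_0)+\nu\int B^J\Kk_J\to\infty,
\]
and the same divergence holds for \(\int B^J(\mathcal P_J)_+\).

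For the truncations, let \(\kappa_N^J\) agree with \(\kappa^J\) up to the start of the \(N\)-th retained cell, and then smoothly (or piecewise-linearly in \(B\)) reduce \(B\) to zero within a short subinterval before \(T_*\). Concretely, set \(B_N:=B^J\cdot\psi_N\), where \(\psi_N\) is a nonincreasing cutoff equal to \(1\) through retained cell \(N\) and vanishing shortly after, and put \(\kappa_N^J:=\int_t^{T_*}B_N\). Then \(\kappa_N^J\to\kappa^J\) in \(C^1\cap W^{2,1}\), because \(\|B_N-B\|_{L^\infty}\le b_N\to0\) and \(\|(B_N-B)'\|_{L^1}\le 2b_N\). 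With \(\varphi=\eta\kappa_N^J\) we get \(\varphi'=\eta'\kappa_N^J-\eta B_N\), so
\[
\ip{\mathcal P_J'}{\eta\kappa_N^J}=\int\eta B_N\mathcal P_J-\int\eta'\kappa_N^J\mathcal P_J .
\]
The second term is supported on the fixed compact preterminal set where \(\eta'\ne0\). It is therefore bounded by \(C_{J,\eta}\) uniformly in \(N\), since \(|\kappa_N^J|\le\kappa^J(t_0)\). For the first term, repeat the integration by parts with weight \(\eta B_N\), which is nonincreasing on the terminal tail. The terms involving \(\eta'\) again contribute only \(C_{J,\eta}\), and the remaining contribution is at least \(\nu\sum_{n\le N}b_nK_{j_n}\ge\nu N\), up to the part lying where \(\eta\neq1\). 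This gives \eqref{eq:distribution-divergence} after reindexing, since \(\nu N\ge \tfrac\nu2N\).

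The main obstacle is the sign bookkeeping in this integration by parts. The boundary term \(-\tfrac12(\eta B_N)'\Gamma_J\) has a good sign only where \(\eta B_N\) is nonincreasing. Where \(\eta\) increases, we need \(\Gamma_J\) bounded on that compact set, which holds by preterminal smoothness, and this contribution is absorbed into \(C_{J,\eta}\). The final nonextension statement follows because a continuous extension would give \(\ip{\mathcal P_J'}{\eta\kappa_N^J}\to\ip{\mathcal P_J'}{\eta\kappa^J}<\infty\).
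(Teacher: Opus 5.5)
Your argument is correct. Its skeleton matches the paper's: a nonincreasing weight, integration by parts against \eqref{eq:fixed-root-enstrophy}, terminally truncated primitives, and a contradiction with any continuous extension. The weight, however, is built by a different mechanism.

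\textbf{The paper's weight.} It charges every cell. Lemma~\ref{lem:weight-sequence} turns the quantitative bound \eqref{eq:H2-action-lower} and the summability \eqref{eq:first-order-summability} into summable $\beta_j$ with $\beta_jK_j\ge1$. It then sets $B^J=\sum_j\beta_j\vartheta_j$, so that $B^J$ equals the tail sum $\sum_{k\ge j}\beta_k$ on $I_j\setminus E_j$. Each transition is hidden in a terminal piece $E_j\subset I_j$ carrying at most half of $K_j$. Every cell then contributes at least $\tfrac12$, and the truncation is just the partial sum $B_N^J$.

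\textbf{Your weight.} You use only the qualitative divergence \eqref{eq:H2-action-divergence}. Thinning to $K_{j_n}\ge2^n$ with plateau values $2^{-n}$ decouples monotonicity from summability. The transitions sit in skipped cells, so no $E_j$ selection is needed, and each retained cell contributes at least $1$, giving $\nu N$ instead of $\tfrac\nu2N$. Your direct integration by parts on $[t_0,R]$ has a nonnegative boundary term at $R$, so it yields the $(\mathcal P_J)_+$ divergence without passing through truncations.

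\textbf{Trade-off.} The paper's version gives plateau heights explicit in the first-order data $\widetilde d_j,a_j,\ell_j$, and its weight sees every consecutive cell, so $N$ literally counts all cells after $J$. In yours $N$ counts a sparse subsequence, which the existential statement permits. Your version is more elementary and needs only $K_j\to\infty$.

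A few small points need repair:
\begin{enumerate}[label=\textup{(\roman*)}]
\item Choose the thinning with $j_{n+1}\ge j_n+2$. If two retained cells are adjacent, a continuous $B^J$ cannot equal $2^{-n}$ on one and $2^{-n-1}$ on the other, because they share an endpoint and leave no room to interpolate.
\item Use smooth monotone transitions that are flat at their endpoints, as the paper's $\vartheta_j$ are, rather than piecewise-linear ones. Then $\eta\kappa_N^J\in C_c^\infty(t_0,T_*)$ is an admissible test in \eqref{eq:Pprime-definition}. Alternatively, note that this pairing extends to compactly supported $W^{1,1}$ tests because $\mathcal P_J$ is locally bounded.
\item Monotonicity and boundedness alone give only bounded variation. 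Membership in $W^{1,1}$ comes from local Lipschitz continuity on $[t_0,T_*)$ together with continuity at $T_*$.
\item Before the $N$th retained cell, $\kappa_N^J$ agrees with $\kappa^J$ only up to an additive constant; only their derivatives coincide there. Your concrete definition through $B_N=B^J\psi_N$, which is what your estimates actually use, is fine.
\end{enumerate}
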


\begin{corollary}[Endpoint obstruction forced by an atom]
\label{cor:endpoint-obstruction}
Under the hypotheses of Theorem~\ref{thm:atom-conveyor}, the conclusion of
Proposition~\ref{prop:endpoint-tests} holds for every sufficiently late fixed root.  In
particular, \(\mathcal P_J'\) cannot extend continuously to any of the
following spaces of endpoint-zero tests:
\begin{enumerate}[label=\textup{(\roman*)}]
\item the closure of \(C_c^\infty(t_0,T_*)\) in
\(W^{1,p}(t_0,T_*)\), for every \(1\leq p\leq\infty\);
\item the closure of \(C_c^\infty(t_0,T_*)\) in \(W^{2,1}(t_0,T_*)\);
\item the closure of \(C_c^\infty(t_0,T_*)\) in
\(C^1([t_0,T_*])\).
\end{enumerate}
Nor can \(\mathcal P_J'\) be represented by a finite signed measure on
\([t_0,T_*]\).  This obstruction is a further endpoint consequence of the atomic
full-tail rigidity.
\end{corollary}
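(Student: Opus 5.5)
The corollary reduces to Proposition~\ref{prop:endpoint-tests} together with Theorem~\ref{thm:atom-conveyor}. The theorem produces, from the atom, a family \((\tau_j,B_j,q_j)\) satisfying every requirement of Definition~\ref{def:perfect-conveyor}. The times are strictly increasing to \(T_*\). The balls are nested and shrink to \(\{a\}\) by \eqref{eq:balls-to-a}. The packets are orthonormal, obey \eqref{eq:packet-geometry}, and are harmonic gradients in the next ball. Finally \eqref{eq:forward-conveyor}, \eqref{eq:adjoint-conveyor}, \eqref{eq:vanishing-action} and \eqref{eq:vanishing-adjoint-action} hold. So the parent supports a full-tail saturated Hodge family, \eqref{eq:energy-assumptions} holds, and Proposition~\ref{prop:endpoint-tests} applies for every sufficiently late root. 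This gives the weight \(B^J\), the primitive \(\kappa^J\), and the truncations \(\kappa^J_N\) satisfying \eqref{eq:kappa-convergence} and \eqref{eq:distribution-divergence}.

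For the non-extension claims I will argue by contradiction, using the fixed test sequence \(\phi_N:=\eta\kappa_N^J\). Each \(\phi_N\) is supported away from both endpoints, since \(\eta\) vanishes near \(t_0\) and \(\kappa_N^J\) vanishes near \(T_*\). After a standard mollification, which changes \(\ip{\mathcal P_J'}{\phi_N}\) by an arbitrarily small amount because \(\mathcal P_J\) is smooth on the support, each \(\phi_N\) lies in \(C_c^\infty(t_0,T_*)\). By \eqref{eq:kappa-convergence}, \(\phi_N\to\eta\kappa^J\) in \(C^1\cap W^{2,1}\), so \(\phi_N\) is Cauchy in each of the three norms (i)--(iii). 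For (i) with finite \(p\), and for \(p=\infty\), the \(C^1\) convergence on a bounded interval dominates the \(W^{1,p}\) norm. A continuous extension \(\Lambda\) would therefore give \(\ip{\mathcal P_J'}{\phi_N}=\Lambda(\phi_N)\to\Lambda(\eta\kappa^J)\), which is finite. This contradicts \eqref{eq:distribution-divergence}, which forces the pairing to \(+\infty\).

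For the measure statement, suppose \(\mathcal P_J'=\mu\) with \(\mu\) a finite signed measure on \([t_0,T_*]\). Then \(\abs{\ip{\mathcal P_J'}{\phi}}\le\abs{\mu}([t_0,T_*])\norm{\phi}_\infty\) for all test functions \(\phi\), so \(\mathcal P_J'\) extends continuously to the \(C^1\) closure in (iii), which was just excluded. Alternatively one can argue directly: \(\norm{\phi_N}_\infty\) is bounded uniformly in \(N\), while the pairings diverge.

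The only delicate point is checking that \(\phi_N\) really belongs to the closures in question and that the pairing in \eqref{eq:distribution-divergence} agrees with the distributional one in \eqref{eq:Pprime-definition}. The route I would take is that the pairing is computed against functions compactly supported in \((t_0,T_*)\), so the two notions coincide there and the approximation issue is only the routine mollification above.
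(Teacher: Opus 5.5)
Your proposal is correct and follows essentially the same route as the paper. Theorem~\ref{thm:atom-conveyor} supplies a full-tail saturated Hodge family, and Proposition~\ref{prop:endpoint-tests} then gives tests \(\eta\kappa_N^J\) that converge to \(\eta\kappa^J\) in \(C^1\cap W^{2,1}\), hence in all three norms, while their pairings with \(\mathcal P_J'\) diverge; a finite signed measure is excluded because it acts continuously under uniform convergence. Your mollification step is harmless but unnecessary: the \(\vartheta_j\) are smooth, so \(\eta\kappa_N^J\) already lies in \(C_c^\infty(t_0,T_*)\).
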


\begin{lemma}[A summable weight sequence]
\label{lem:weight-sequence}
There are positive numbers \(\beta_j\) such that
\begin{equation}
 \sum_{j\geq J+1}\beta_j<\infty,
 \qquad
 \beta_jK_j\geq1
 \quad(j\geq J+1).
 \label{eq:weight-properties}
\end{equation}
\end{lemma}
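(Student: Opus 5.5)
The plan is to set \(\beta_j:=1/K_j\). This is the natural choice, since the second condition \(\beta_jK_j\ge1\) then holds with equality. Two things must be checked: that each \(K_j\) is positive and finite, so that \(\beta_j\) is a well-defined positive number, and that \(\sum_{j\ge J+1}K_j^{-1}<\infty\).

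\emph{Finiteness and positivity.} Finiteness of \(K_j\) holds because \(I_j=[\tau_j,\tau_{j+1}]\) is a compact preterminal interval lying at positive delay after \(\tau_J\) (recall \(j\ge J+1\)). By Proposition~\ref{prop:oseen-family}, \(H(\tau_j)\in H^1_\sigma\), and the enstrophy identity \eqref{eq:fixed-root-enstrophy} with the smooth drift \(u\) on \(I_j\) gives \(\int_{I_j}\norm{\Delta H}_2^2<\infty\) (the Gronwall argument of \eqref{eq:oseen-H1-differential}). Positivity follows from \eqref{eq:H2-action-lower} in Lemma~\ref{lem:cell-H2-action}: for \(J\ge J_0\) both entries of the minimum are strictly positive, since \(\ell_j>0\) and \(a_j,\widetilde d_j<\infty\). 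Alternatively, \(K_j=0\) would force \(\Delta H\equiv0\) on \(I_j\), so \(H\) would be constant there (mean is preserved), contradicting the displacement bound \eqref{eq:fixed-root-displacement}.

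\emph{Summability.} This is the main step. I would use the quantitative bound \eqref{eq:H2-action-lower} rather than only the divergence \eqref{eq:H2-action-divergence}:
\[
 K_j^{-1}\le \max\{c_1^{-1}\widetilde d_j^{\,2}a_j,\;c_2^{-1}\nu^2\ell_j\}
 \le c_1^{-1}\widetilde d_j^{\,2}a_j+c_2^{-1}\nu^2\ell_j .
\]
The second term is summable by \eqref{eq:first-order-summability}. For the first, \(\widetilde d_j\le \sum_i\widetilde d_i<\infty\) is uniformly bounded, and \(\sum a_j<\infty\), so \(\sum\widetilde d_j^{\,2}a_j\le(\sup_i\widetilde d_i)^2\sum_j a_j<\infty\). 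Here \(\sum_j\widetilde d_j<\infty\) because \(\sum d_j<\infty\) and \(\sum\ell_j\le T_*-\tau_{J+1}\) are both finite.

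Hence \(\sum_{j\ge J+1}\beta_j<\infty\), with \(J\ge J_0\) fixed as in Lemma~\ref{lem:cell-H2-action}. The only point requiring care is using the explicit minimum bound instead of mere divergence \(K_j\to\infty\), which alone would not give summability. If the convention \(+\infty\) in the first entry occurs, the bound reduces to \(K_j^{-1}\le c_2^{-1}\nu^2\ell_j\), and the same estimate applies.
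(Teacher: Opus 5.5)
Your proof is correct and is essentially the paper's argument. The paper takes \(\beta_j=c_1^{-1}\widetilde d_j^{\,2}a_j+c_2^{-1}\nu^2\ell_j\) directly, which is exactly your majorant for \(K_j^{-1}\), verifies \(\beta_jK_j\ge1\) by a case split on the minimum in \eqref{eq:H2-action-lower}, and uses the same summability estimate; choosing that majorant rather than \(1/K_j\) only spares the separate check that \(0<K_j<\infty\), which you carry out correctly.
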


\begin{proof}
Let \(c_1,c_2\) be the constants in \eqref{eq:H2-action-lower} and put
\begin{equation}
 \beta_j=c_1^{-1}\widetilde d_j^2a_j+c_2^{-1}\nu^2\ell_j.
 \label{eq:weight-definition}
\end{equation}
Set
\[
 A_j=c_1\widetilde d_j^{-2}a_j^{-1},
 \qquad C_j=c_2\nu^{-2}\ell_j^{-1},
\]
with \(A_j=+\infty\) when \(a_j\widetilde d_j=0\).  If
\(A_j\leq C_j\), then
\eqref{eq:H2-action-lower} gives
\[
 \beta_jK_j\geq c_1^{-1}\widetilde d_j^2a_j A_j=1.
\]
If \(C_j\leq A_j\), the same lower bound gives
\[
 \beta_jK_j\geq c_2^{-1}\nu^2\ell_j C_j=1.
\]
Thus \(\beta_jK_j\geq1\) in both alternatives, including \(a_j=0\).
Moreover,
\begin{equation}
 \sum_j\widetilde d_j^2a_j
 \leq\left(\sup_j\widetilde d_j^2\right)\sum_ja_j<\infty,
 \qquad
 \sum_j\ell_j<\infty,
 \label{eq:weight-summability}
\end{equation}
which proves the result.
\end{proof}

For each cell choose a terminal subinterval \(E_j\subset I_j\) so short that
\begin{equation}
 \int_{E_j}\Kk_J(t)\dd t\leq\frac12K_j.
 \label{eq:low-K-tail}
\end{equation}
Let \(\vartheta_j\) be a smooth nonincreasing function which equals one
before \(E_j\), decreases from one to zero inside \(E_j\), vanishes after
\(\tau_{j+1}\), and is flat at the transition endpoints.  Define
\begin{equation}
 B^J(t):=\sum_{j\geq J+1}\beta_j\vartheta_j(t).
 \label{eq:B-sum}
\end{equation}
Uniform convergence and disjointness of the transition intervals give
\begin{equation}
 B^J\in C([t_0,T_*])\cap W^{1,1}(t_0,T_*),
 \quad (B^J)'\leq0,
 \quad B^J(T_*)=0,
 \quad
 \norm{(B^J)'}_1=\sum_{j\geq J+1}\beta_j.
 \label{eq:B-regularity-proof}
\end{equation}
On \(I_j\setminus E_j\), all future summands remain equal to one, so
\begin{equation}
 B^J(t)\geq\sum_{k\geq j}\beta_k\geq\beta_j.
 \label{eq:B-cell-lower}
\end{equation}
It follows from \eqref{eq:low-K-tail} and \eqref{eq:weight-properties} that
\begin{equation}
 \int_{I_j}B^J(t)\Kk_J(t)\dd t\geq\frac12.
 \label{eq:weighted-K-per-cell}
\end{equation}
This proves the first divergence in \eqref{eq:weighted-divergence}.

For a finite prefix put
\begin{equation}
 B_N^J(t):=\sum_{j=J+1}^{N}\beta_j\vartheta_j(t),
 \qquad
 \kappa_N^J(t):=\int_t^{T_*}B_N^J(s)\dd s.
 \label{eq:finite-endpoint-tests}
\end{equation}
Also set
\begin{equation}
 \kappa^J(t):=\int_t^{T_*}B^J(s)\dd s.
 \label{eq:infinite-endpoint-test}
\end{equation}
The weight summability gives
\begin{align}
 \norm{B_N^J-B^J}_\infty
 &\leq\sum_{j>N}\beta_j\longrightarrow0,
 \label{eq:B-uniform-convergence}\\
 \norm{(B_N^J)'-(B^J)'}_1
 &=\sum_{j>N}\beta_j\longrightarrow0.
 \label{eq:B-derivative-convergence}
\end{align}
Consequently,
\begin{equation}
 \kappa_N^J\longrightarrow\kappa^J
 \quad\text{strongly in }
 C^1([t_0,T_*])\cap W^{2,1}(t_0,T_*).
 \label{eq:kappa-strong-proof}
\end{equation}
Since \(B^J\) is nonincreasing and tends to zero,
\begin{equation}
 0\leq\frac{\kappa^J(t)}{T_*-t}
 \leq B^J(t)\longrightarrow0.
 \label{eq:kappa-little-o-proof}
\end{equation}

\begin{lemma}[Finite-prefix lower bound]
\label{lem:finite-prefix-weight}
There is \(C_J<\infty\) such that
\begin{equation}
 \int_{t_0}^{T_*}B_N^J(t)\mathcal P_J(t)\dd t
 \geq\frac{\nu}{2}(N-J)-C_J.
 \label{eq:weighted-production-prefix}
\end{equation}
In particular,
\begin{equation}
 \int_{t_0}^{T_*}B^J(t)(\mathcal P_J(t))_+\dd t=\infty.
 \label{eq:weighted-positive-production}
\end{equation}
\end{lemma}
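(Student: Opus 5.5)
Multiply the enstrophy identity \eqref{eq:fixed-root-enstrophy} by the finite-prefix weight \(B_N^J\) and integrate over \((t_0,T_*)\). Since \(B_N^J\) vanishes after \(\tau_{N+1}\), only a strictly preterminal interval enters, and \(H\) is smooth there by the positive-delay smoothing of Proposition~\ref{prop:oseen-family}. This gives
\[
 \int_{t_0}^{T_*}B_N^J\mathcal P_J\dd t
 =\frac12\int_{t_0}^{T_*}B_N^J\Gamma_J'\dd t
 +\nu\int_{t_0}^{T_*}B_N^J\Kk_J\dd t .
\]

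For the second-order term, repeat the cellwise argument behind \eqref{eq:weighted-K-per-cell}, now for the truncated sum. For \(J+1\le j\le N\) and \(t\in I_j\setminus E_j\), every summand \(\vartheta_k\) with \(j\le k\le N\) equals one, so \(B_N^J(t)\ge\beta_j\). Combining \eqref{eq:low-K-tail} with \(\beta_jK_j\ge1\) from Lemma~\ref{lem:weight-sequence} gives \(\int_{I_j}B_N^J\Kk_J\ge\frac12\). Summing over \(N-J\) cells yields \(\nu\int B_N^J\Kk_J\ge\frac\nu2(N-J)\); the cell-count convention may shift this by a bounded amount, which is absorbed into \(C_J\).

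For the enstrophy-derivative term, integrate by parts:
\[
 \int B_N^J\Gamma_J'\dd t=-B_N^J(t_0)\Gamma_J(t_0)-\int\Gamma_J\,(B_N^J)'\dd t .
\]
The boundary term at the right end vanishes because \(B_N^J\) has compact support before \(T_*\). Since \((B_N^J)'\le0\) and \(\Gamma_J\ge0\), the last integral is nonnegative. The left boundary term satisfies \(B_N^J(t_0)\le\sum_j\beta_j\), so it is bounded below by \(-\frac12\bigl(\sum_j\beta_j\bigr)\Gamma_J(\tau_{J+1})\). Here \(\Gamma_J(\tau_{J+1})\) is finite by \eqref{eq:oseen-delayed-H1} applied with delay \(\tau_{J+1}-\tau_J>0\). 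Setting \(C_J=\frac12\bigl(\sum\beta_j\bigr)\Gamma_J(\tau_{J+1})\) plus the bounded cell-count adjustment proves \eqref{eq:weighted-production-prefix}.

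For \eqref{eq:weighted-positive-production}, use \(0\le B_N^J\le B^J\) together with \(\mathcal P_J\le(\mathcal P_J)_+\). This gives
\[
 \int B^J(\mathcal P_J)_+\ge\int B_N^J\mathcal P_J\ge\frac\nu2(N-J)-C_J,
\]
and letting \(N\to\infty\) shows the left side is infinite.

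The main obstacle is the boundary and sign bookkeeping in the integration by parts. One needs the sign of \((B_N^J)'\) to favor the lower bound, which it does since the weight is nonincreasing, and \(\Gamma_J\) must be finite at the left endpoint \(t_0\), which is exactly where the positive delay \(\tau_{J+1}>\tau_J\) is used.
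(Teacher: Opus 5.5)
Your proposal is correct and follows the paper's proof essentially step for step. You weight the enstrophy identity by \(B_N^J\) and integrate by parts, using \((B_N^J)'\le0\) and the vanishing of \(B_N^J\) near \(T_*\); you bound the left boundary term through the positive-delay \(H^1\) estimate; you collect at least \(\tfrac12\) from each of the \(N-J\) cells via \(B_N^J\ge\beta_j\) off \(E_j\) and \(\beta_jK_j\ge1\); and you finish with \(0\le B_N^J\le B^J\). Your hedge about a cell-count shift is unnecessary, since \(J+1\le j\le N\) gives exactly \(N-J\) cells.
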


\begin{proof}
The functions \(B_N^J\) vanish on a neighbourhood of \(T_*\), although they
need not vanish at \(t_0\).  Integration by parts in
\eqref{eq:fixed-root-enstrophy} gives
\begin{align}
 \int_{t_0}^{T_*}B_N^J\mathcal P_J\dd t
 &=\frac12[B_N^J\Gamma_J]_{t_0}^{T_*}
   -\frac12\int_{t_0}^{T_*}(B_N^J)'\Gamma_J\dd t\notag\\
 &\qquad+\nu\int_{t_0}^{T_*}B_N^J\Kk_J\dd t.
 \label{eq:weighted-enstrophy-identity}
\end{align}
The middle term is nonnegative.  For every retained cell
\(J+1\leq j\leq N\), the argument in
\eqref{eq:weighted-K-per-cell} applies with \(B_N^J\geq\beta_j\) outside
\(E_j\).  Thus the last term is at least
\(\frac{\nu}{2}(N-J)\), while the left boundary is uniformly bounded.  This
proves \eqref{eq:weighted-production-prefix}.  Since \(0\leq B_N^J\leq B^J\),
the left-hand side is bounded above by
\(\int B^J(\mathcal P_J)_+\).  Letting \(N\to\infty\) proves
\eqref{eq:weighted-positive-production}.
\end{proof}

\begin{proof}[Proof of Proposition~\ref{prop:endpoint-tests}]
Equations \eqref{eq:B-regularity-proof},
\eqref{eq:weighted-K-per-cell}, \eqref{eq:weighted-positive-production},
\eqref{eq:kappa-strong-proof}, and \eqref{eq:kappa-little-o-proof} give all
assertions through \eqref{eq:kappa-convergence}.

Fix \(t_0<t_1<T_*\) and choose a smooth left cutoff \(\eta\) that vanishes
near \(t_0\) and equals one on \([t_1,T_*)\).  The product
\(\eta\kappa_N^J\) is a legitimate compactly supported test on
\((t_0,T_*)\), and the definition of the distributional derivative gives
\begin{equation}
 \ip{\mathcal P_J'}{\eta\kappa_N^J}
 =\int_{t_0}^{T_*}\eta B_N^J\mathcal P_J\dd t
  -\int_{t_0}^{T_*}\eta'\kappa_N^J\mathcal P_J\dd t.
 \label{eq:Pprime-cutoff-identity}
\end{equation}
The difference between the first integral here and
\(\int B_N^J\mathcal P_J\) is supported in the fixed compact interval
\([t_0,t_1]\).  Both that difference and the second integral are uniformly
bounded in \(N\), because \(B_N^J\) and \(\kappa_N^J\) converge uniformly
there and \(\mathcal P_J\) is smooth preterminally.  Lemma
\ref{lem:finite-prefix-weight}, followed by relabeling \(N\) as the number of
retained cells, therefore gives
\begin{equation}
 \ip{\mathcal P_J'}{\eta\kappa_N^J}
 \geq\frac{\nu}{2}N-C_{J,\eta}\longrightarrow\infty.
 \label{eq:Pprime-divergence-proof}
\end{equation}

The functions \(\eta\kappa_N^J\) are smooth and compactly preterminal and
converge strongly to \(\eta\kappa^J\) in each of the three test spaces listed
in Corollary~\ref{cor:endpoint-obstruction}.  A continuous extension of
\(\mathcal P_J'\) to any one of those spaces would therefore make the
pairings converge, contradicting \eqref{eq:Pprime-divergence-proof}.  A
finite signed measure on the closed interval would likewise act
continuously under uniform convergence.  This proves the proposition and its
corollary.
\end{proof}

\begin{remark}[Dependence of the endpoint weight]
\label{rem:family-adapted-weight}
The construction is family-adapted: \(B^J\) depends on the given parent,
saturated family, root, and selected low-\(\Kk_J\) terminal pieces \(E_j\),
while remaining fixed with respect to the prefix \(N\).  It records the
endpoint irregularity associated with the selected saturated family.
\end{remark}

\bibliographystyle{amsplain}
\bibliography{references}

\end{document}